\DeclareSymbolFont{sansops}{OT1}{\sfdefault}{m}{n}
\renewcommand\operator@font{\mathgroup\symsansops}
\newcommand{\nc}{\newcommand}
\newcommand{\rnc}{\renewcommand}
\numberwithin{equation}{section}
\nc{\eqrefs}[2]{\text{(\ref{#1}-\ref{#2})}}
\theoremstyle{plain}
\newtheorem{lemma}{Lemma}[subsection]
\newtheorem{prop}[lemma]{Proposition}
\newtheorem{theorem}[lemma]{Theorem}
\newcommand{\Prop}{\begin{prop}}
	\newcommand{\enprop}{\end{prop}}
\newcommand{\Lemma}{\begin{lemma}}
	\newcommand{\enlemma}{\end{lemma}}
\newcommand{\Th}{\begin{theorem}}
	\newcommand{\enth}{\end{theorem}}
\newtheorem{corollary}[lemma]{Corollary}
\newcommand{\Cor}{\begin{corollary}}
	\newcommand{\encor}{\end{corollary}}
\newtheorem{definition}[lemma]{Definition}
\newtheorem{conjecture}[lemma]{Conjecture}
\newcommand{\Def}{\begin{definition}}
	\newcommand{\edf}{\end{definition}}
\newtheorem{sublemma}[lemma]{Sublemma}
\newcommand{\Sublemma}{\begin{sublemma}}
	\newcommand{\ensub}{\end{sublemma}}
\theoremstyle{definition}
\newtheorem{remark}[lemma]{Remark}
\newtheorem{remarks}[lemma]{Remarks}
\newtheorem{example}[lemma]{Example}
\newtheorem{Convention}[lemma]{Convention}
\newcommand{\Conv}{\begin{Convention}}
	\newcommand{\enconv}{\end{Convention}}
\nc{\Rem}{\begin{remark}}
	\nc{\enrem}{\end{remark}}
\nc{\rmkend}{\hfill \ensuremath{\triangledown}}
\nc{\defend}{\hfill \ensuremath{\triangle}}
\nc{\be}{\begin{enumerate}}
	\nc{\ee}{\end{enumerate}}
\newcommand{\eq}{\begin{eqnarray}}
	\newcommand{\eneq}{\end{eqnarray}}
\nc{\bc}{\begin{cases}}
	\nc{\ec}{\end{cases}}
\newcommand{\eqn}{\begin{eqnarray*}}
	\newcommand{\eneqn}{\end{eqnarray*}}
\newcommand{\ba}{\begin{array}}
	\newcommand{\ea}{\end{array}}
\newcommand{\arxiv}[1]{\href{http://arxiv.org/abs/#1}{\tt arXiv:\nolinkurl{#1}}}
\nc{\cA}{{\mathcal A}}
\nc{\cB}{{\mathcal B}}
\nc{\cC}{{\mathcal C}}
\nc{\cD}{{\mathcal D}}
\nc{\cE}{{\mathcal E}}
\nc{\cF}{{\mathcal F}}
\nc{\cG}{{\mathcal G}}
\nc{\cH}{{\mathcal H}}
\nc{\cI}{{\mathcal I}}
\nc{\cJ}{{\mathcal J}}
\nc{\cK}{{\mathcal K}}
\nc{\cL}{{\mathcal L}}
\nc{\cM}{\mathcal{M}}
\nc{\cN}{{\mathcal N}}
\nc{\cO}{{\mathcal O}}
\nc{\cP}{{\mathcal P}}
\nc{\calQ}{{\mathcal Q}}
\nc{\cR}{{\mathcal R}}
\nc{\cS}{\mathcal{S}}
\nc{\cT}{{\mathcal T}}
\nc{\cU}{\mathcal{U}}
\nc{\cV}{{\mathcal V}}
\nc{\cX}{{\mathcal X}}
\nc{\cY}{\mathcal{Y}}
\nc{\cW}{\mathcal{W}}
\nc{\cZ}{{\mathcal Z}}
\nc{\bbA}{{\mathbb{A}}}
\nc{\bbB}{{\mathbb{B}}}
\nc{\bbC}{{\mathbb{C}}}
\nc{\bbD}{{\mathbb{D}}}
\nc{\bbE}{{\mathbb{E}}}
\nc{\bbF}{{\mathbb{F}}}
\nc{\bbG}{{\mathbb{G}}}
\nc{\bbH}{{\mathbb{H}}}
\nc{\bbI}{{\mathbb{I}}}
\nc{\bbJ}{{\mathbb{J}}}
\nc{\bbK}{{\mathbb{K}}}
\nc{\bbL}{{\mathbb{L}}}
\nc{\bbM}{{\mathbb{M}}}
\nc{\bbN}{{\mathbb{N}}}
\nc{\bbO}{{\mathbb{O}}}
\nc{\bbP}{{\mathbb{P}}}
\nc{\bbQ}{{\mathbb{Q}}}
\nc{\bbR}{{\mathbb{R}}}
\nc{\bbS}{{\mathbb{S}}}
\nc{\bbT}{{\mathbb{T}}}
\nc{\bbU}{{\mathbb{U}}}
\nc{\bbV}{{\mathbb{V}}}
\nc{\bbX}{{\mathbb{X}}}
\nc{\bbY}{{\mathbb{Y}}}
\nc{\bbW}{{\mathbb{W}}}
\nc{\bbZ}{{\mathbb{Z}}}
\nc{\scrA}{{\mathscr A}}
\nc{\scrB}{{\mathscr B}}
\nc{\scrC}{{\mathscr C}}
\nc{\scrD}{{\mathscr D}}
\nc{\scrE}{{\mathscr E}}
\nc{\scrF}{{\mathscr F}}
\nc{\scrG}{{\mathscr G}}
\nc{\scrH}{{\mathscr H}}
\nc{\scrI}{{\mathscr I}}
\nc{\scrJ}{{\mathscr J}}
\nc{\scrK}{{\mathscr K}}
\nc{\scrL}{{\mathscr L}}
\nc{\scrM}{{\mathscr M}}
\nc{\scrN}{{\mathscr N}}
\nc{\scrO}{{\mathscr O}}
\nc{\scrP}{{\mathscr P}}
\nc{\scrQ}{{\mathscr Q}}
\nc{\scrR}{{\mathscr R}}
\nc{\scrS}{{\mathscr S}}
\nc{\scrT}{{\mathscr T}}
\nc{\scrU}{{\mathscr U}}
\nc{\scrV}{{\mathscr V}}
\nc{\scrX}{{\mathscr X}}
\nc{\scrY}{{\mathscr Y}}
\nc{\scrW}{{\mathscr W}}
\nc{\scrZ}{{\mathscr Z}}
\nc{\sfA}{{\mathsf A}}
\nc{\sfB}{{\mathsf B}}
\nc{\sfC}{{\mathsf C}}
\nc{\sfD}{{\mathsf D}}
\nc{\sfE}{{\mathsf E}}
\nc{\sfF}{{\mathsf F}}
\nc{\sfG}{{\mathsf G}}
\nc{\sfH}{{\mathsf H}}
\nc{\sfI}{{\mathsf I}}
\nc{\sfJ}{{\mathsf J}}
\nc{\sfK}{{\mathsf K}}
\nc{\sfL}{{\mathsf L}}
\nc{\sfM}{{\mathsf M}}
\nc{\sfN}{{\mathsf N}}
\nc{\sfO}{{\mathsf O}}
\nc{\sfP}{{\mathsf P}}
\nc{\sfQ}{{\mathsf Q}}
\nc{\sfR}{{\mathsf R}}
\nc{\sfS}{{\mathsf S}}
\nc{\sfT}{{\mathsf T}}
\nc{\sfU}{{\mathsf U}}
\nc{\sfV}{{\mathsf V}}
\nc{\sfX}{{\mathsf X}}
\nc{\sfY}{{\mathsf Y}}
\nc{\sfW}{{\mathsf W}}
\nc{\sfZ}{{\mathsf Z}}
\nc{\sfa}{{\mathsf a}}
\nc{\sfb}{{\mathsf b}}
\nc{\sfc}{{\mathsf c}}
\nc{\sfd}{{\mathsf d}}
\nc{\sfe}{{\mathsf e}}
\nc{\sff}{{\mathsf f}}
\nc{\sfg}{{\mathsf g}}
\nc{\sfh}{{\mathsf h}}
\nc{\sfi}{{\mathsf i}}
\nc{\sfj}{{\mathsf j}}
\nc{\sfk}{{\mathsf k}}
\nc{\sfl}{{\mathsf l}}
\nc{\sfm}{{\mathsf m}}
\nc{\sfn}{{\mathsf n}}
\nc{\sfo}{{\mathsf o}}
\nc{\sfp}{{\mathsf p}}
\nc{\sfq}{{\mathsf q}}
\nc{\sfr}{{\mathsf r}}
\nc{\sfs}{{\mathsf s}}
\nc{\sft}{{\mathsf t}}
\nc{\sfu}{{\mathsf u}}
\nc{\sfv}{{\mathsf v}}
\nc{\sfx}{{\mathsf x}}
\nc{\sfy}{{\mathsf y}}
\nc{\sfw}{{\mathsf w}}
\nc{\sfz}{{\mathsf z}}
\nc {\bfA}{{\mathbf A}}
\nc {\bfB}{{\mathbf B}}
\nc {\bfC}{{\mathbf C}}
\nc {\bfD}{{\mathbf D}}
\nc {\bfE}{{\mathbf E}}
\nc {\bfF}{{\mathbf F}}
\nc {\bfG}{{\mathbf G}}
\nc {\bfH}{{\mathbf H}}
\nc {\bfI}{{\mathbf I}}
\nc {\bfJ}{{\mathbf J}}
\nc {\bfK}{{\mathbf K}}
\nc {\bfL}{{\mathbf L}}
\nc {\bfM}{{\mathbf M}}
\nc {\bfN}{{\mathbf N}}
\nc{\bfO}{{\mathbf O}}
\nc {\bfP}{{\mathbf P}}
\nc {\bfQ}{{\mathbf Q}}
\nc {\bfR}{{\mathbf R}}
\nc {\bfS}{{\mathbf S}}
\nc {\bfT}{{\mathbf T}}
\nc {\bfU}{{\mathbf U}}
\nc {\bfV}{{\mathbf V}}
\nc {\bfX}{{\mathbf X}}
\nc {\bfY}{{\mathbf Y}}
\nc {\bfW}{{\mathbf W}}
\nc {\bfZ}{{\mathbf Z}}
\nc {\fka}{{\mathfrak a}}
\nc {\fkb}{{\mathfrak b}}
\nc {\fkc}{{\mathfrak c}}
\nc {\fkd}{{\mathfrak d}}
\nc {\fke}{{\mathfrak e}}
\nc {\fkf}{{\mathfrak f}}
\nc {\fkg}{{\mathfrak g}}
\nc {\fkh}{{\mathfrak h}}
\nc {\fki}{{\mathfrak i}}
\nc {\fkj}{{\mathfrak j}}
\nc {\fkk}{{\mathfrak k}}
\nc {\fkl}{{\mathfrak l}}
\nc {\fkm}{{\mathfrak m}}
\nc {\fkn}{{\mathfrak n}}
\nc {\fko}{{\mathfrak o}}
\nc {\fkp}{{\mathfrak p}}
\nc {\fkq}{{\mathfrak q}}
\nc {\fkr}{{\mathfrak r}}
\nc {\fks}{{\mathfrak s}}
\nc {\fkt}{{\mathfrak t}}
\nc {\fku}{{\mathfrak u}}
\nc {\fkv}{{\mathfrak v}}
\nc {\fkx}{{\mathfrak x}}
\nc {\fky}{{\mathfrak y}}
\nc {\fkw}{{\mathfrak w}}
\nc {\fkz}{{\mathfrak z}}
\nc {\fkA}{{\mathfrak A}}
\nc {\fkB}{{\mathfrak B}}
\nc {\fkC}{{\mathfrak C}}
\nc {\fkD}{{\mathfrak D}}
\nc {\fkE}{{\mathfrak E}}
\nc {\fkF}{{\mathfrak F}}
\nc {\fkG}{{\mathfrak G}}
\nc {\fkH}{{\mathfrak H}}
\nc {\fkI}{{\mathfrak I}}
\nc {\fkJ}{{\mathfrak J}}
\nc {\fkK}{{\mathfrak K}}
\nc {\fkL}{{\mathfrak L}}
\nc {\fkM}{{\mathfrak M}}
\nc {\fkN}{{\mathfrak N}}
\nc {\fkO}{{\mathfrak O}}
\nc {\fkP}{{\mathfrak P}}
\nc {\fkQ}{{\mathfrak Q}}
\nc {\fkR}{{\mathfrak R}}
\nc {\fkS}{{\mathfrak S}}
\nc {\fkT}{{\mathfrak T}}
\nc {\fkU}{{\mathfrak U}}
\nc {\fkV}{{\mathfrak V}}
\nc {\fkX}{{\mathfrak X}}
\nc {\fkY}{{\mathfrak Y}}
\nc {\fkW}{{\mathfrak W}}
\nc {\fkZ}{{\mathfrak Z}}
\rnc{\a}{\fka}
\rnc{\b}{\fkb}
\rnc{\c}{\fkc}
\rnc{\d}{\fkd}
\nc{\e}{\fke}
\nc{\f}{\fkf}
\nc{\g}{\fkg}
\nc{\h}{\fkh}
\rnc{\i}{\fki}
\rnc{\j}{\fkj}
\rnc{\k}{\fkk}
\rnc{\l}{\fkl}
\nc{\m}{\fkm}
\nc{\n}{\fkn}
\rnc{\o}{\fko}
\nc{\p}{\fkp}
\nc{\q}{\fkq}
\rnc{\r}{\fkr}
\nc{\s}{\fks}
\rnc{\t}{\fkt}
\rnc{\u}{\fku}
\rnc{\v}{\fkv}
\nc{\x}{\fkx}
\nc{\y}{\fky}
\nc{\w}{\fkw}
\nc{\z}{\fkz}
\newcommand{\C}{{\mathbb C}}
\newcommand{\Q}{\mathbb {Q}}
\newcommand{\Z}{{\mathbb Z}}
\nc{\Ad}{\operatorname{Ad}}
\nc{\ad}{\operatorname{ad}}
\nc{\sym}{\mathfrak{S}} 
\nc{\weyl}{\mathfrak{W}}
\nc{\Serre}{\operatorname{Serre}}
\nc{\gfin}{\mathring{\g}}
\nc{\hfin}{\mathring{\h}}
\nc{\Dfin}{\mathring{D}}
\nc{\Pfin}{\mathring{\Plat}}
\nc{\Qfin}{\mathring{\Qlat}}
\nc{\rhofin}{\mathring{\rho}}
\nc{\Ifin}{\mathring{I}}
\newcommand{\Hom}{\operatorname{Hom}}
\newcommand{\End}{\operatorname{End}}
\nc{\Aut}{\operatorname{Aut}}
\nc{\coker}{{\operatorname{coker}}}
\newcommand{\id}{{\operatorname{id}}}
\nc{\Id}{\operatorname{Id}}
\nc{\aff}{{\sf aff}}
\nc{\Sp}{\operatorname{Sp}}
\nc{\cork}{{\operatorname{cork}}} 
\nc{\rank}{{\operatorname{rank}}}
\nc{\Rep}{{\operatorname{Rep}}}
\nc{\Repfd}{{\operatorname{Rep}_{\sf fd}}}
\nc{\Mod}{{\operatorname{Mod}}}
\nc{\Modfd}{{\operatorname{Mod}_{\sf fd}}}
\newcommand{\Tr}{\mathop{\sf Tr}}
\renewcommand{\t}{\mathsf{t}}
\nc {\ul}{\underline}
\nc {\ol}{\overline}
\nc {\wtil}{\widetilde}
\nc {\wh}{\widehat}
\nc {\wb}{\widebar}
\nc{\scs}{\scriptscriptstyle}
\nc{\scsop}{\scriptscriptstyle\operatorname} 
\nc {\ie}{\emph{i.e.}, } 
\nc {\eg}{\emph{e.g.}, } 
\nc {\cf}{{\emph{cf.}} } 
\nc {\loccit}{{\emph{loc. cit. }} } 
\nc {\aand}{\qquad\mbox{and}\qquad}
\nc{\al}{\alpha}
\nc{\ga}{\gamma}
\nc{\del}{\delta}
\nc{\eps}{\epsilon}
\nc{\ze}{\zeta}
\nc{\ka}{\kappa}
\nc{\la}{\lambda}
\nc{\si}{\sigma}
\nc{\om}{\omega}
\nc{\ups}{\upsilon}
\nc{\Del}{\Delta}
\nc{\La}{\Lambda}
\nc{\Ups}{\Upsilon}
\nc{\ot}{\otimes}
\nc{\ext}{\mathsf{ext}}
\nc{\fksl}{\mathfrak{sl}}
\nc{\fkso}{\mathfrak{so}}
\nc{\fksp}{\mathfrak{sp}}
\nc{\qu}{\quad}
\nc{\qq}{\qquad}
\rnc{\eq}[1]{\begin{equation} #1 \end{equation}}
\nc{\drc}[1]{\delta_{#1}}
\nc{\der}{\partial}
\nc{\sfad}{\mathsf{ad}}
\nc{\ten}{\otimes}
\nc{\ourcomment}[1]{{\color{teal}{//#1//}}}
\nc{\andreacomment}[1]{{\color{magenta}{//Andrea: #1//}}}
\nc{\bartcomment}[1]{{\color{red}{//Bart: #1//}}}
\nc{\Omit}[1]{}
\nc{\summary}[1]{}
\nc{\tm}[1]{{\color{magenta}{#1}}}
\nc{\bsF}{\bbF} 
\nc{\Ons}{\mathbf{O}_q}
\nc{\aOns}{\mathbf{O}_q^{\mathsf{a}}}
\nc{\bOns}{\mathbf{O}_q^{\mathsf{b}}}
\nc{\km}{\mathbf{k}}
\nc{\qkm}{\mathfrak{k}}
\nc{\texp}[1]{\wt{s}_{#1}} 
\nc{\Lus}[1]{T_{#1}} 
\nc{\mLus}[1]{\mathbf{T}_{#1}} 
\nc{\fin}{{0}}
\nc{\Afin}{{A_\fin}}
\rnc{\aff}[1]{\wh{#1}}
\nc{\de}[1]{\epsilon_{#1}} 
\nc{\fIS}{I} 
\nc{\aIS}{\aff{I}} 
\nc{\IS}{I}
\nc{\veps}{\varepsilon}
\nc{\vtheta}{\vartheta}
\nc{\rt}[1]{\alpha_{#1}} 
\nc{\cort}[1]{h_{#1}}
\nc{\fwt}[1]{\varpi_{#1}}
\nc{\fcwt}[1]{\Lambda_{#1}^\vee}
\nc{\rootsys}{{\bm\Phi}} 
\nc{\hrt}{\vartheta} 
\nc{\cp}[2]{#1(#2)}
\nc{\iip}[2]{\left( #1 , #2\right)} 
\nc{\drv}[1]{\delta_{#1}} 
\nc{\codrv}[1]{d_{#1}} 
\nc{\bsfld}{\mathbb{K}} 
\nc{\central}[1]{c_{#1}}
\nc{\Qlat}{\mathsf{Q}}
\nc{\Qpm}{\Qlat^\pm}
\nc{\Qp}{\Qlat^+}
\nc{\Qm}{\Qlat^-}
\nc{\Qlatv}{\Qlat^\vee}
\nc{\Qvpm}{\Qlat^{\vee,\pm}}
\nc{\Qvp}{\Qlat^{\vee,+}}
\nc{\Qvm}{\Qlat^{\vee,-}}
\nc{\Qextp}{\Qlat^+_{\sf ext}}
\nc{\Qvext}{\Qlat^{\vee}_{\sf ext}}
\nc{\Qvextt}{\Qlat^{\vee}_{{\sf ext},\tau}}
\nc{\Qvextp}{\Qlat^{\vee,+}_{\sf ext}}
\nc{\aQ}{\aff{\Qlat}}
\nc{\aQv}{\aff{\Qlat}^{\vee}}
\nc{\aQp}{\aff{\Qlat}^+}
\nc{\aQextp}{\aff{\Qlat}^+_{\sf ext}}
\nc{\aQvp}{\aff{\Qlat}^{\vee,+}}
\nc{\aQvext}{\wt{\Qlat}^{\vee}} 
\nc{\aQvextt}{\aff{\Qlat}^{\vee}_{{\sf ext},\tau}}
\nc{\aQvextp}{\aff{\Qlat}^{\vee,+}_{\sf ext}}
\nc{\Plat}{\mathsf{P}}
\nc{\Platv}{\mathsf{P}^\vee}
\nc{\Ppm}{\mathsf{P}^\pm}
\nc{\Pp}{\mathsf{P}^+}
\nc{\Pm}{\mathsf{P}^-}
\nc{\Pvpm}{\mathsf{P}^{\vee,\pm}}
\nc{\Pvp}{\mathsf{P}^{\vee,+}}
\nc{\Pvm}{\mathsf{P}^{\vee,-}}
\nc{\aP}{\wh{\Plat}}
\nc{\Pext}{\Plat}
\nc{\Pextpm}{\Plat_{\sf ext}^{\pm}}
\nc{\aPext}{\wh{\Plat}_{{\sf ext}}}
\nc{\aPpm}{\aP^{\pm}}
\nc{\aPextpm}{\aP^{\pm}_{\sf ext}}
\nc{\aPd}{{\aP/\bbZ\drv{}}} 
\nc{\PZ}{{\Plat_{\Z}}}
\nc{\PvZ}{{\Platv_{\Z}}}
\nc{\cl}[1]{\operatorname{cl}(#1)}
\nc{\Pcl}{\Plat_{\cl}}
\nc{\Pvcl}{\Plat^{\vee}_{\cl}}
\nc{\Pclz}{\Plat_{\cl,0}}
\nc{\fr}{_\mathsf{fr}}
\nc{\Qfr}{\Qlat\fr}
\nc{\Pfr}{\Plat\fr}
\nc{\Qvfr}{\Qlatv\fr}
\nc{\Pvfr}{\Platv\fr}
\nc{\wgt}[1]{\operatorname{wt}(#1)}
\nc{\hp}{\h'}
\nc{\gp}{\g'}
\nc{\kp}{\k'}
\nc{\Wfin}{\mathring{W}}
\nc{\Waff}{W}
\nc{\Wext}{\widetilde{W}}
\nc{\rfl}[1]{{s_{#1}}}
\nc{\UqgKM}{U_q(\g_{\scsop{KM}})}
\nc{\gKM}{\g_{\scsop{KM}}}
\nc{\Uqg}{U_q(\g)} 
\nc{\Uqgp}{U_q(\g')} 
\nc{\Uqh}{U_q(\h)} 
\nc{\Uqhp}{U_q(\h')} 
\nc{\Uqb}{U_q{(\b)}} 
\nc{\Uqbp}{U_q{(\b^+)}} 
\nc{\Uqbm}{U_q{(\b^-)}} 
\nc{\Uqbpm}{U_q{({\b}^{\pm})}} 
\nc{\Uqn}{U_q{(\n)}} 
\nc{\Uqnm}{U_q{({\n}^-)}} 
\nc{\Uqnp}{U_q{({\n}^+)}} 
\nc{\Uqnpm}{U_q{({\n}^{\pm})}} 
\nc{\Uqag}{U_q(\wt{\g})} 
\nc{\Uqagp}{U_q(\wh{\g})} 
\nc{\Uqan}{U_q(\wh{\n})} 
\nc{\Uqanp}{U_q(\wh{\n}^+)} 
\nc{\Uqanm}{U_q(\wh{\n}^-)} 
\nc{\Uqanpm}{U_q(\wh{\n}^{\pm})} 
\nc{\Uqah}{U_q(\wt{\h})} 
\nc{\Uqahp}{U_q(\wh{\h})} 
\nc{\Uqab}{U_q(\wt{\b})} 
\nc{\Uqabp}{U_q(\wt{\b}^+)} 
\nc{\Uqabm}{U_q(\wt{\b}^-)} 
\nc{\Uqabpm}{U_q(\wt{\b}^{\pm})} 
\nc{\Uqabpp}{U_q(\wh{\b}^+)} 
\nc{\UqgX}{U_q(\g_X)} 
\nc{\UqgZ}{U_q(\g_Z)}
\nc{\UqbpX}{U_q(\b_X^+)}
\nc{\UqnmX}{U_q(\n_X^-)}
\nc{\Uqht}{U_q(\h^{\theta})}
\nc{\CUqag}[1]{\Uqag^{#1}^{\cO^{\sf int}}}
\nc{\CUqg}[1]{\Uqg^{#1}^{\cO^{\sf int}}}
\nc{\Lg}{\mathscr{L}}
\nc{\ag}{\wt{\g}} 
\nc{\agp}{\wh{\g}} 
\nc{\ah}{\wt{\h}} 
\nc{\ahp}{\wh{\h}} 
\nc{\UqLg}{U_q(\Lg)} 
\nc{\CUqLg}[1]{(\UqLg^{#1})^{\sf fd}} 
\nc{\Kg}[1]{k_{#1}}
\nc{\Kgpm}[1]{k_{#1}^{\pm}}
\nc{\Kgp}[1]{k_{#1}^+}
\nc{\Kgm}[1]{k_{#1}^-}
\nc{\Eg}[1]{E_{#1}}
\nc{\Fg}[1]{F_{#1}}
\nc{\egp}[1]{e^{+}_{#1}}
\nc{\egm}[1]{e^{-}_{#1}}
\nc{\egpm}[1]{e^{\pm}_{#1}}
\nc{\egmp}[1]{e^{\mp}_{#1}}
\nc{\Ce}{\cC}
\nc{\xpm}[1]{x^{\pm}_{#1}}
\nc{\xmp}[1]{x^{\mp}_{#1}}
\nc{\xp}[1]{x^{+}_{#1}}
\nc{\xm}[1]{x^{-}_{#1}}
\nc{\xz}[1]{\xi_{#1}}
\nc{\phipm}[1]{\phi^{\pm}_{#1}}
\nc{\phip}[1]{\phi^{+}_{#1}}
\nc{\phim}[1]{\phi^{-}_{#1}}
\nc{\chev}{\omega}
\nc{\weightspace}[2]{{{#1}_{#2}}}
\nc{\wsp}[2]{\weightspace{#1}{#2}}
\nc{\wts}[1]{\mathsf{wt}(#1)}
\nc{\hwL}[1]{L(#1)}
\nc{\catO}[3]{\O_{#1}^{#2}{#3}}
\nc{\qstr}[2]{\Sigma_{#1,#2}}
\nc{\evrep}[2]{V_{#1}(#2)}
\nc{\shrep}[2]{{#1}(#2)}
\nc{\Lshrep}[2]{\Lfml{#1}{#2}}
\nc{\Pshrep}[2]{\Pfml{#1}{#2}}
\nc{\qstrep}[2]{V(\qstr{#1}{#2})}
\nc{\qsrep}[1]{V(#1)}
\nc{\HL}[1]{\mathcal{C}_{#1}}
\nc{\Oint}{{\cO^{\sf int}}}
\nc{\shift}[1]{\Sigma_{#1}}
\nc{\tshift}[1]{\Sigma^{\tau}_{#1}}
\nc{\shiftm}[2]{{#1}_{#2}}
\nc{\shifta}[1]{{\chi}_{#1}}
\nc{\Deltaop}{\Delta^{\sf op}}
\nc{\fml}[2]{{#1}[\negthinspace[#2]\negthinspace]} 
\nc{\Lfml}[2]{{#1}(\negthinspace(#2)\negthinspace)} 
\nc{\Pfml}[2]{{#1}\{#2\}} 
\nc{\qsl}[1]{U_q({\mathfrak{sl}}_{#1})}
\nc{\qasl}[1]{U_q(\wh{\mathfrak{sl}}_{#1})}
\nc{\qlsl}[1]{U_q(L\mathfrak{sl}_{#1})}
\nc{\brS}[1]{S_{#1}} 
\nc{\brSg}[1]{\widetilde{s}_{#1}} 
\nc{\Br}[1]{\mathscr{B}_{#1}} 
\nc{\RBr}[1]{\mathscr{RB}_{#1}} 
\nc{\qWS}[1]{S_{#1}}
\nc{\LT}[1]{T_{#1}}
\nc{\tcorr}[1]{\ka_{#1}} 
\nc{\bt}[1]{\mathcal{S}_{#1}} 
\nc{\adt}[1]{\mathcal{T}_{#1}} 
\nc{\adbt}[1]{\mathcal{T}_{#1}} 
\nc{\Rcorr}[1]{\eta_{#1}}
\nc{\intg}{\mathbf{W}^{\mathsf{int}}}
\nc{\Vect}[1]{\operatorname{Vect}_{#1}}
\nc{\corank}{\operatorname{cork}}
\nc{\gsat}[1]{\mathsf{GSat}(#1)} 
\nc{\auxgsat}[2]{\mathsf{Sat}(#1;#2)} 
\nc{\sat}[1]{{\mathbf{S}}} 
\nc{\tsat}{\theta} 
\nc{\zsat}{\zeta} 
\nc{\tsatq}{\tsat_{q}} 
\nc{\zsatq}{\zsat_{q}} 
\nc{\tinv}[1]{\theta_{#1}}
\nc{\oi}{\mathsf{oi}} 
\nc{\Parsetc}{\bm{\Gamma}} 
\nc{\Parsets}{\bm{\Sigma}}
\nc{\Parc}{\bm{\gamma}} 
\nc{\Pars}{\bm{\sigma}}
\nc{\parc}[1]{\bm{\gamma}_{#1}} 
\nc{\pars}[1]{\bm{\sigma}_{#1}}
\nc{\ctheta}{\theta} 
\nc{\qtheta}{\theta_q} 
\nc{\qthetat}{\widetilde{\theta}_q} 
\nc{\QSP}{U_q(\mathfrak{k})}
\nc{\Uqk}{U_q(\mathfrak{k})}
\nc{\Uqkp}{U_q({\mathfrak{k}'})}
\nc{\wt}{\widetilde}
\nc{\Ieq}{I_{\sf eq}} 
\nc{\Idiff}{I_{\sf diff}} 
\nc{\Ins}{I_{\sf ns}} 
\nc{\aIeq}{\aIS_{\sf eq}} 
\nc{\aIdiff}{\aIS_{\sf diff}} 
\nc{\aIns}{\aIS_{\sf ns}} 
\nc{\bg}[1]{b_{#1}} 
\nc{\Bg}[1]{B_{#1}} 
\nc{\QK}[1]{\Ups_{#1}} 
\nc{\RM}[1]{{R}_{#1}} 
\nc{\QR}[1]{{\Xi}_{#1}} 
\nc{\sRM}[2]{{{R}}_{#1}(#2)} 
\nc{\sRMv}[2]{{\widecheck{R}}_{#1}(#2)} 
\nc{\rRM}[2]{{\mathbf{R}}_{#1}(#2)} 
\nc{\rRMC}[3]{{\mathbf{R}}^{#1}_{#2}(#3)} 
\nc{\rRMv}[2]{\widecheck{\mathbf{R}}_{#1}(#2)} 
\nc{\rRMt}[2]{\widetilde{\mathbf{R}}_{#1}(#2)} 
\nc{\KM}[1]{K_{#1}} 
\nc{\sKM}[2]{K_{#1}(#2)} 
\nc{\rKM}[2]{\mathbf{K}_{#1}(#2)} 
\nc{\trKM}[2]{\wt{\mathbf{K}}_{#1}(#2)} 
\nc{\TKM}[1]{{\bbK}_{#1}} 
\nc{\zeroKM}[2]{K^0_{#1}(#2)} 
\nc{\inftyKM}[2]{K^{\infty}_{#1}(#2)} 
\nc{\tsRM}[2]{{{R}}^{\tau}_{#1}(#2)} 
\nc{\tsRMv}[2]{{\widecheck{R}}^{\tau}_{#1}(#2)} 
\nc{\auxsat}[1]{\bfT} 
\nc{\hext}[2]{{#1}[\negthinspace[#2]\negthinspace]} 
\nc{\binomb}[2]{\genfrac{[}{]}{0pt}{}{#1}{#2}}
\nc{\GQSP}{\cG_{\tsat,\parc{}}}
\nc{\gpsi}{\ol\psi}
\nc{\gJ}{\ol J}
\nc{\gK}{\ol K}
\nc{\gTKM}[1]{\ol{\mathbb{K}}_{#1}}
\nc{\TM}{\mathscr{T}}
\nc{\Supp}{\mathsf{Supp}}
\nc{\twistrep}[2]{{#1}^{#2}}  
\nc{\wtbeta}{\wt{\beta}}
\nc{\Gg}{\cG} 
\nc{\gau}{{\bf g}} 
\nc{\op}{\operatorname{op}}
\nc{\cop}{\operatorname{cop}}
\nc{\sclQK}{\mathbf{Y}} 
\nc{\tproj}[1]{[#1]_\tsat}
\nc{\hgt}{{\mathsf{ht}}}
\nc{\Vpsi}{V^{\psi}}
\nc{\Wpsi}{W^{\psi}}
\nc{\VWpsi}{(V\ten W)^{\psi}}
\nc{\sint}{{\operatorname{int}}}
\nc{\pint}[1]{{{#1}\negthinspace\operatorname{-int}}}
\nc{\Xint}{{{{X}\negthinspace\operatorname{-int}}}}
\nc{\WUqg}{\cW} 
\nc{\OUqg}{\cO} 
\nc{\OintUqg}{\cO_{\sint}} 
\nc{\POUqg}[1]{\cO_{\pint{#1}}} 
\nc{\vect}{\operatorname{Vect}}
\nc{\FF}[2]{\sff_{#1}^{#2}}
\nc{\COUqg}[2]{\End(\FF{#1}{#2})}
\nc{\COintUqg}[1]{\End(\FF{\sint}{#1})}
\nc{\CPOUqg}[2]{\End(\FF{\pint{#1}}{#2})} 
\nc{\CWUqg}[1]{\End(\FF{\cW}{#1})}
\nc{\WO}[1]{\operatorname{C}^{#1}}
\nc{\WUqk}{\cW_{\theta}}
\nc{\CWUqk}[1]{\End(\sff_\theta^{#1})}
\nc{\Dr}{\cD}
\nc{\Drt}{\cE_{\theta}}
\nc{\DrX}{\cE^{\op}_X}
\nc{\TQK}[1]{\Xi_{#1}}
\nc{\PROPB}{\mathsf{PROP/B}}
\nc{\PROB}{\mathsf{PROB}}
\nc{\PROP}{\mathsf{PROP}}
\nc{\bfb}{\mathbf{b}}
\nc{\G}{\cG}
\nc{\Fun}{\operatorname{Fun}}
\nc{\monactp}{\vartriangleleft}
\nc{\monactm}{\blacktriangleleft}
\rnc{\UqLg}{U_q(\mathfrak{L})} 
\rnc{\gfin}{\mathring{\g}}
\nc{\fdUqL}{\cC}
\nc{\fdUqk}{\cC_{\theta}}
\nc{\rnRM}[2]{\mathbf{R}^{\operatorname{norm}}_{#1}(#2)}
\nc{\rTKM}[2]{\mathbb{K}^{\operatorname{trig}}_{#1}(#2)}
\nc{\rnTKM}[2]{\mathbb{K}^{\operatorname{norm}}_{#1}(#2)}
\nc{\rnKM}[2]{\mathbf{K}^{\operatorname{norm}}_{#1}(#2)}
\nc{\bmb}{{\bm \beta}}
\nc{\Ct}{\mathbf{t}}
\nc{\Cb}{\mathbf{b}}
\nc{\FSeq}{\operatorname{Seq}}
\nc{\BSeq}{\operatorname{BSeq}}
   \def\MR#1{}
\title[Boundary transfer matrices from quantum symmetric pairs]{Boundary transfer matrices arising from quantum symmetric pairs}
\author[A. Appel]{Andrea Appel} 
\address{Dipartimento di Scienze Matematiche, 
	Fisiche e Informatiche, Universit\`a di Parma, 
	INdAM - GNSAGA,
	and INFN - Gruppo Collegato di Parma, 
	Parco Area delle Scienze 53/A, 
	43124 Parma (PR), Italy}
\email{\href{mailto:andrea.appel@unipr.it}{andrea.appel@unipr.it}}
\author[B. Vlaar]{Bart Vlaar}
\address{
Beijing Institute of Mathematical Sciences and Applications, No.\ 544, Hefangkou Village, Huaibei Town, Huairou, Beijing, China}
\email{\href{b.vlaar@bimsa.cn}{b.vlaar@bimsa.cn}}
\dedicatory{To Tom Koornwinder, on the occasion of his 80th birthday}
\thanks{The first author is partially supported by 
an INdAM Project Grant 2024 and the PRIN grant 2022HMTBLL.
Both authors are partially supported by the International Scientists Program of the Beijing Natural Sciences Foundation (grant number {IS24003}).}
\keywords{Reflection equation; quantum Kac-Moody algebras; quantum symmetric pairs}
\subjclass[2020]{
	Primary: 81R50. 
	Secondary: 16T25, 
	 17B37, 
	81R12. 
}
\nc{\coloneqq}{=} 
\begin{document}
	

\begin{abstract}
We introduce a universal framework for boundary transfer matrices, inspired by Sklyanin’s two-row transfer matrix approach for quantum integrable systems with boundary conditions.
The main examples arise from quantum symmetric pairs of finite and affine type. 
As a special case we recover a construction by Kolb in finite type. 
We review recent work on universal solutions to the reflection equation and highlight several open problems in this field.
\end{abstract}


\maketitle

\setcounter{tocdepth}{1}
\tableofcontents


\section{Introduction}

\subsection{Quantum symmetric pairs}

Motivated by questions in harmonic analysis, quantum deformations of the homogeneous spaces $G/G^\Theta$ were extensively studied in the 1990s.
Here $G$ is a connected complex Lie group, while $G^\Theta$ is the subgroup of fixed points with respect to an involutive automorphism $\Theta\colon G \to G$. 
Let $\theta= {\sf d} \Theta$ be the involutive automorphism of the corresponding Lie algebra $\fkg$ and consider the fixed-point subalgebra $\fkg^\theta$.

By \cite{Jim85,Dri86}, the universal enveloping algebra $U(\fkg)$ admits a Hopf algebra 
deformation $U_q(\fkg)$, called a quantized universal enveloping algebra or a Drinfeld-Jimbo quantum group. This  is a non-commutative non-cocommutative Hopf algebra, which reduces to  $U(\fkg)$ as $q \to 1$.

Constructing a suitable deformation of $U(\fkg^\theta)$ is somewhat more subtle.
Given the scarcity of Hopf subalgebras of $U_q(\fkg)$, a natural alternative is provided by the following basic observation. 
Let $C_q \subseteq U_q(\fkg)$ be a two-sided coideal subalgebra, \ie
\[
\Delta(C_q) \subseteq U_q(\fkg) \otimes C_q + C_q \otimes U_q(\fkg).
\]
with a well-defined limit $C\subseteq U(\g)$ as $q \to 1$.
Relying on a geometric series formula of the antipode $S: U_q(\fkg) \to U_q(\fkg)$, one observes that the subbialgebra $C$ is a Hopf subalgebra of $U(\fkg)$ and hence it has the form $C = U(\fkk)$ for some Lie subalgebra $\fkk \subseteq \fkg$.

T. Koornwinder found a two-sided coideal subalgebra of $U_q(\fksl_2)$ as a deformation of $U(\fkso_2) \subset U(\fksl_2)$ in \cite{Koo93}. More generally, deformations of $U(\fkg^\theta)$ were obtained as one-sided coideal subalgebras of $U_q(\fkg)$ in \cite{NS95,Nou96,DN98,GIK98}. Similar constructions were already considered in \cite{GK91}, although the coideal property was not emphasized.

In parallel, G. Letzter introduced one-sided coideal deformations of $U(\fkg^\theta)$ for any symmetric pair $(\fkg,\fkg^\theta)$, see \cite{Let97,Let99,Let02}. Their definition is given directly in terms of the generators of $U_q(\fkg)$.
This construction was extended by S. Kolb to the case where $\fkg$ is any symmetrizable Kac-Moody algebra in \cite{Kol14}, recovering as a special case the twisted quantum loop algebras, also known as twisted q-Yangians, see \cite{MRS03, CGM14}. 
Quantum deformations $\Uqk \subset \Uqg$ of a more general class of subalgebras $\fkk \subseteq \fkg$ which are not necessarily fixed-point subalgebras, which we refer to as \emph{pseudo-fixed-point subalgebras}, were obtained in \cite{RV20,RV22}.
We refer to all such pairs $(\Uqg,\Uqk)$ as quantum (pseudo-)symmetric pairs (QSP), and to $\Uqk$ as a QSP subalgebra.

\subsection{Universal K-matrices}

In the works \cite{NS95,Nou96,MRS03,KS09,CGM14} the QSP subalgebras were defined in terms of a K-matrix, \ie~a matrix solution to a reflection equation, using an approach akin to the R-matrix presentation for quantum groups outlined in \cite{FRT90}.
Conversely, in the Letzter-Kolb formalism the QSP subalgebra $U_q(\fkk) \subseteq U_q(\fkg)$ naturally gives rise to a universal solution of a (generalized) reflection equation.
In \cite{BW18a}, 
H. Bao and W. Wang introduced  
a twisted $\Uqk$-intertwiner, called \emph{quasi K-matrix}, and proved its existence for a family of QSPs extending $U_q(\fksl_N)$. 
This construction has been generalized across the range of QSPs, see \cite{BW18b,BK19,RV20,AV22,WZh23}.\\

In \cite{AV22}, we introduced the notion of a cylindrical structure on $(A,B)$ where $A$ is a quasitriangular bialgebra and $B \subseteq A$ is a right coideal subalgebra. 
It consists of a suitable algebra automorphism $\psi: A \to A$ and a twisted 
$B$-intertwiner $K \in A^\times$, called a \emph{basic K-matrix},
satisfying a twisted reflection equation 
with respect to the universal R-matrix $R$ of $A$. 
Cylindrical structures can be \emph{gauge-transformed} by elements in $A^\times$. 
This gauge action can be used to bring either $K$ or $\psi$ to a more convenient form.

This approach has been instrumental in \cite{AV25a}, leading to a uniform and systematic construction of spectral K-matrices (\ie parameter-dependent matrix solutions of the reflection equation) on finite-dimensional representations of quantum affine algebras.
The key idea is that the twist automorphism $\psi$ can be used to accomplish the inversion of the spectral parameter. 
Specifically, when $\g$ is an (untwisted) affine Lie algebra, the universal K-matrix gives rise to a formal Laurent series on any finite-dimensional module over the quantum affine algebra $U_q(\fkg') \subset U_q(\fkg)$. 
It satisfies a generalized reflection equation of the type considered in \cite{FM91,KS92,Che92}, which includes the original "untwisted" reflection equation from \cite{Che84,Skl88}. 
Moreover, on irreducible modules, it yields rational solutions of the same generalized reflection equation, called a \emph{trigonometric K-matrix}.\\

By the coideal property of $B$, $\Mod(B)$ is naturally equipped with a module structure over the monoidal category $\Mod(A)$. 
Since $\Mod(A)$ is braided, it is natural to require that the action on $\Mod(B)$ is compatible with the braiding induced by the R-matrix. 
This is equivalent to the existence of an element $\TKM{} \in B \ot A$ which satisfies similar relations as $K$, and it is therefore referred to as a \emph{tensor K-matrix}. 
The universal framework encoding the action of $\TKM{}$ has been developed in \cite{DKM03,Enr07,Bro13,BZBJ18}, with further developments and applications in \cite{DCNTY19,Kol20,KY20,LBG23,Lem23,Xu23,AV25b}.
Roughly, the tensor K-matrix yields a {\em braiding} on $\Mod(B)$, and induces an action of the cylindrical braid group.

In presence of a cylindrical structure, the element
\eq{ 
\TKM{} \coloneqq  (R^\psi)_{21} \cdot (1 \ot K) \cdot R, \qq R^\psi \coloneqq  (\psi \ot \id)(R),
} 
is almost automatically a tensor K-matrix, except for the \emph{support condition} $\TKM{}\in B\ten A$. 
In the case of an arbitrary QSP, we proved in \cite{AV25b} that, for every cylindrical structure constructed in \cite{AV22}, the operator $\TKM{}$ is indeed a tensor K-matrix.\footnote{Motivated by this result, in this paper we include the support condition in the definition of a cylindrical structure, in contrast with \cite{AV22, AV25a, AV25b}.}

\subsection{Present work} 
In this paper, we build on the results from \cite{AV22, AV25b} to provide a uniform construction of a \emph{universal boundary transfer matrix} and references therein, in the form of a ring homomorphism 
\eq{
	\TM: [\Mod_{\sf fd}(A)] \to B, \qq [V] \mapsto \TM_V = \Tr_2(\TKM{})
}
where $[\Mod_{\sf fd}(A)]$ denotes the Grothendieck ring of finite-dimensional $A$-modules. 
Analogous to the case of transfer matrices arising from a universal R-matrix, we define $\TM$ by letting the second leg of $\TKM{} \in B \ot A$, to act on a finite-dimensional $A$-module $V$ and then taking the trace over $V$.
Note also that in \cite{Lem23} universal transfer matrices of this type are considered for the q-Onsager algebra.\\ 

This construction, however, does not quite work if we consider an arbitrary cylindrical structure: it does not behave well with respect to tensor products in the second leg. 
For a large class of bialgebras $A$, this can be resolved by gauge-transforming, resulting in a dinstinguished cylindrical structure.
We call the relevant gauge transformation, which appears as a factor in the formula for the transformed $\TM$, a \emph{boundary} gauge transformation. 
This modification is inspired by the theory of (non-universal) 2-boundary transfer matrices due to E.~Sklyanin, see \cite{Skl88} and also \cite{MN91,FSHY97,Vla15}.

\Omit{
\andreacomment{I am kind of stunned by the observation above. It is not really correct to say that $\TKM{}$ does not behave well, because eventually its gauge transformed $\gTKM{}$ is still (just) a tensor K-matrix! However, the twist associated to $\gTKM{}$ satisfies $\gJ=\Ad(1\ten D)(R^{\gpsi})$, therefore
\[
1\ten\Delta(\gTKM{})=(1\ten 1\ten D)(R^{\gpsi})_{23}^{-1}(1\ten 1\ten D^{-1})\gTKM{13}R^{\gpsi}_{23}
\gTKM{12}\]
Why is this identity better if not because we prove it to be so? This shows that the problem is not really the tensor K-matrix, but the Drinfeld twist associated to the K-matrix. It is in {\bf this} sense that $\TKM{}$ does not behave well, because it involves the {\bf wrong} Drinfeld twist. By gauging $\TKM{}$ we still get a tensor K-matrix, but with a very different Drinfeld twist. This is truly nice, because so far we have been appreciating the usefulness of the twist operator, while this construction really highlight the role of the Drinfeld twist. If not yet clear, this is a stunning proof of the validity of our \emph{full force} definition of cylindrical structures.}

\bartcomment{Indeed, very nice. It seemed to me too technical to talk about Drinfeld twists in the introduction. But, by your point, maybe we should. 
I rewrote the above paragraph a little without going as far as that. It ties in with my comments at the start of Section 2.5 and somewhat also with referee comment 6. Some old comments below. \\

Now it seems to me more natural to set it up as follows (rather than directly imposing a condition on the gauge transformation).
Call a twist pair $(\psi,J)$ \emph{well-behaved} (replace by nicer word!) if for all $W \in \Mod_{\sf fd}(A)$ we have
\[ 
J_{\bullet,W} = ((((R^\psi)_{\bullet,W}^{\sf t_2})^{-1})^{\sf t_2})^{-1}.
\]
For a balanced Hopf algebra, this amounts to $J_{\bullet,W} = \Ad(1 \ot D_W)(R^\psi_{\bullet,W})$. 
A natural sufficient condition for this is
\[
J= \Ad(1 \ot D)(R^\psi).
\] 
It is equivalent if $\Mod_{\sf fd}(A)$ separates points. For well-behaved twist pairs we have Theorem \ref{thm:t:Grothendieck}.\\

It is natural to ask if a given cylindrical structure can be gauge-transformed to a well-behaved one. 
In Chapter 3, we essentially describe this. 
Let $(\psi_0,J_0)$ be a given twist pair. 
Suppose we have a solution of $\Delta(K') = J_0^{-1} \cdot (1 \ot K') \cdot R^{\psi_0} \cdot (K' \ot 1)$.
Also let $D$ be a sovereign element.
Then we can gauge transform $(\psi_0,J_0)$ to a well-behaved twist pair by a general invertible element of $A$, written in the form
\[
g = D \cdot a \cdot (K')^{-1}
\]
with $a \in A^\times$ arbitrary, if and only if
\[
\TKM{}' \cdot \Delta(a^{-1}) \cdot (1 \ot a) = (a^{-1} \ot 1) \cdot \TKM{}'.
\]

It implies $\eps(a)=1$.
The special desired solution $a = 1$ stands out.
Can we develop it further towards some classification result up to central grouplike elements?
}
}

When $A$ is a balanced Hopf algebra, a remarkable example of a boundary gauge transformation is given by the \emph{generalized dual K-matrices}. 
These are elements of the form $\wt K = D \cdot (K')^{-1}$, where $D$ is the sovereign element\footnote{That is, $D=b^{-1}u$, where $b$ is the balance and $u$ is the Drinfeld element. Note that $D$ is group-like and $\Ad(D)=S^2$, see Section~\ref{ss:balance-sovereign}.}, and $K'$ is a basic K-matrix, suitably compatible with $K$.
In particular, $\wt K$ is itself a solution of a generalized reflection equation.
For any such $\wt K$, we obtain a ring homomorphism $\TM: [\Mod_{\sf fd}(A)] \to B$
by setting $\TM^{(V)}=\Tr_2(\gTKM{\bullet, V})$
where $\gTKM{}=(1\ten\wt K)\cdot \TKM{}$ (see Theorem.~\ref{thm:t:Grothendieck}).\\

The construction has interesting applications to the theory of quantum symmetric pairs of finite and affine types.
\begin{enumerate}\itemsep0.25cm
\item Let $(\Uqg,\Uqk)$ be a QSP of finite type equipped with the cylindrical structure from \cite{BK19, Kol20}. 
When $K'$ is the balance, so $\wt K$ is the Drinfeld element, the map $\TM$ specializes to the ring homomorphism $\bm\Phi$ constructed by Kolb in \cite[Sec.~4]{Kol20}, and it is therefore surjective over $Z(\Uqk)$ (see Proposition~\ref{prop:kolb}).

We observe, however, that in certain cases $\TM$ reduces to a rather trivial map. 
For instance, this occurs whenever $\wt K$ and $\TKM{}$ are defined in terms of the same basic K-matrix $K$ (see Proposition \ref{lem:transfermatrix:trivial}).

\item
Let $(\Uqg,\Uqk)$ be a QSP of affine type. Then, the procedure outlined above yields a ring homomorphism 
\eq{
	\TM(z): [\Mod_{\sf fd}(\Uqg)] \to \Lfml{\Uqk}{z}
}
(see Theorem~\ref{thm:spectraltransfermatrix}).
It provides a universal analogue of the construction of the transfer matrix given by Sklyanin in \cite{Skl88}, extending it to more general reflection equations, on a par with those considered  by I. Cherednik in \cite[Sec.~5]{Che92} in the relation with the two-boundary quantum KZ equations.
Remarkably, if $\wt K$ and $\TKM{}$ are defined in terms of the same basic K-matrix $K$, we still obtain highly nontrivial elements in $\Lfml{\Uqk}{z}$ in stark contrast to the finite type case.
It is natural to interpret the image of $\TM(z)$ in $\Lfml{\Uqk}{z}$ as QSP analogue of a Bethe subalgebra, in the spirit of \cite{BK05,BB13,Lem23}. 
	
The map $\TM(z)$ can be regarded as a first step in the definition of a boundary analogue of the q-characters for quantum affine algebras developed by E. Frenkel, E. Mukhin and N. Reshetikhin \cite{FR99,FM01}. 
We outline further potential applications in the representation theory of $\Uqg$ and $\Uqk$ in Section \ref{sec:reptheoryapplications}.
\end{enumerate}


\subsection{Notation}

We will introduce some standard notations which will be used throughout this paper for any unital associative $\bsF$-algebra $A$ for any field $\bsF$.
We denote conjugation by $a \in A^\times$ by $\Ad(a)$.

For any $\bsF$-linear space $V$, consider the algebraic dual $V^\vee \coloneqq  \Hom_\bsF(V,\bsF)$ and the \emph{(right) partial transpose}, which is the linear map 
\eq{
	{}^{\sf t_2}: A \ot \End_{\bsF}(V) \to A \ot \End_{\bsF}(V^\vee) 
}
uniquely defined by 
$(a \ot f)^{\sf t_2} = a \ot f^{\sf t}$ for $a \in A$ and $f \in \End_{\bsF}(V)$.
Further, if $\dim(V)<\infty$, the \emph{(right) partial trace} is the linear map
\eq{
	\Tr_2: A \ot \End_{\bsF}(V) \to A
}
uniquely defined by $\Tr_2 (a \ot f) = \Tr(f) a$ for $a \in A$ and $f \in \End_{\bsF}(V)$.
Note the following basic identities:
\eq{
\label{linalg} \Tr_2 (\Phi \cdot \Psi) = \Tr_2 (\Phi^{\t_2} \cdot \Psi^{\t_2}), \qq (\Phi \cdot (1_A \ot f))^{\t_2} = (1_A \ot f^\t) \cdot \Phi^{\t_2},
}
for all $\Phi,\Psi \in A \ot \End_\bsF(V)$ and $f \in \End_\bsF(V)$.

If $V$ is an $A$-module, we will write $a_V$ for the action of $a \in A$ on $V$.
More generally, let $T \in A^{\ot n}$ for $n \in \Z_{\ge 1}$ and let $V_1,\ldots,V_n$ be $A$-modules.
Then we will write $T_{V_1,\ldots,V_n}$ for the action of $T$ on $V_1 \ot \cdots \ot V_n$. 
Moreover, for any $1 \le i \le n$, 
\[
T_{V_1,\ldots,V_{i-1},\bullet,V_{i+1},\ldots,V_n} \in \End_{\bsF}(V_1 \ot \cdots \ot V_{i-1}) \ot A \ot \End_{\bsF}(V_{i+1} \ot \cdots \ot V_n)
\]
is obtained from $T$ by letting each tensor factor of $T$ except the $i$-th act on the corresponding $A$-module.

Let $\psi : A \to A$ be an algebra automorphism.
For any $T \in A \ot A$, we will write
\eq{ 
T^\psi \coloneqq  (\psi \ot \id)(T), \qq T^{\psi\psi} = (\psi \ot \psi)(T).
}
Further, we will write $V^\psi$ for the correspondingly $\psi$-twisted module structure on the same underlying $\bsF$-linear space. 
In other words, for all $a \in A$ we have $a_{V^\psi}  = \psi(a)_V$.

\subsection{Acknowledgments}
The authors would like to thank Pascal Baseilhac, Azat Gainutdinov, David Hernandez and Stefan Kolb for stimulating discussions. 
Some of the new results contained in this manuscript were announced at the conference Recent Advances in Quantum Integrable Systems 2024 at Laboratoire d'Annecy de Physique Théorique. B.V. thanks the organizers for the kind invitation and hospitality.


\section{Boundary transfer matrices for cylindrical bialgebras} \label{sec:cylstrbalHopf}

A cylindrical structure is an extra structure placed on a quasitriangular bialgebra $A$ and a right coideal subalgebra $B$, specifically designed to encode a universal solution to a generalized reflection equation \cite{AV22,AV25b}. In this section, we determine a sufficient condition for a cylindrical structures on $(A, B)$ to yield a ring homomorphism from the Grothendieck ring of $\Mod_{\sf fd}(A)$ to a commutative subring of $B$. This is a \emph{boundary} analogue of the transfer matrix map considered in \cite{FR99}, providing a universal and generalized version of two-row transfer matrices, originally due to E. Sklyanin \cite{Skl88}.

\subsection{Quasitriangular bialgebras and twist pairs}

Let $A$ be a bialgebra over an arbitrary field $\bbF$, with coproduct $\Delta: A \to A \ot A$ and opposite coproduct $\Delta^{\sf op} \coloneqq {\sf flip} \circ \Delta$. 
We assume that $A$ is \emph{quasitriangular} \cite{Dri86}, \ie there exists $R \in (A \ot A)^\times$ such that
\begin{gather}
	\label{R:intw} \Ad(R) \circ \Delta  = \Delta^{\sf op}, \\
	\label{R:coproduct} (\Delta \ot \id)(R) = R_{13} \cdot R_{23}, \qq (\id \ot \Delta)(R) = R_{13} \cdot R_{12}.
\end{gather}
Note that $(A,\Delta^{\sf op}, R_{21})$ is also a quasitriangular bialgebra, denoted $A^{\sf cop}$.
New quasitriangular bialgebras can be obtained from $A$ in essentially two ways.
\begin{itemize}\itemsep0.25cm
\item
If $\psi: A \to A$ is an algebra automorphism, then we denote by $A^\psi$ the quasitriangular bialgebra with the underlying algebra structure unchanged and the coproduct and the R-matrix are given by
	\eq{
		\Delta^\psi \coloneqq (\psi \ot \psi) \circ \Del \circ \psi^{-1}
		\qq\mbox{and}\qq
		R^{\psi \psi} \coloneqq (\psi \ot \psi)(R).
	}
We refer to $\psi$ as a \emph{twist automorphism}.
\item
If $J \in (A \ot A)^\times$ is a \emph{Drinfeld twist}, \ie 
	\eq{ \label{J:cocycle}
		J_{12} \cdot (\Delta \ot \id)(J) = J_{23} \cdot (\id \ot \Delta)(J)
	}
and $(\eps \ot \id)(J) = 1 = (\id \ot \eps)(J)$, where $\eps$ is the counit, then we denote by $A_J$ the quasitriangular bialgebra with the underlying algebra structure unchanged and the coproduct and the R-matrix given by
	\eq{
		\Delta_J = \Ad(J) \circ \Del
		\qq\mbox{and}\qq
		R_J = J_{21} \cdot R \cdot J^{-1}.
	}
\end{itemize}
A \emph{twist pair} $(\psi,J)$ is the datum of a twist automorphism $\psi$ and a Drinfeld twist $J$ such that $(A^{\sf cop})^\psi = A_J$, \ie
\eq{
(\psi \ot \psi) \circ \Delta^{\sf op} \circ \psi^{-1} = \Ad(J) \circ \Del
\qq\mbox{and}\qq
 R^{\psi \psi}_{21} = J_{21} \cdot R \cdot J^{-1}
}
(and hence $\eps \circ \psi^{-1} = \eps$). Equivalently, the Drinfeld twist $J$ measures the failure of $\psi$ to be an isomorphism of quasitriangular bialgebras\footnote{Indeed, in the terminology introduced in \cite{Dav07}, a twist pair is the same as a {\em twisted} morphism of quasitriangular bialgebras $A^{\sf cop} \to A$.} $A^{\sf cop} \to A$.

\begin{example}
If $\om: A^{\sf cop} \to A$ is an isomorphism of quasitriangular bialgebras, then $(\om, 1 \ot 1)$ is a twist pair. Instead, if $\upsilon: A \to A$ is an automorphism of quasitriangular bialgebras, then $(\upsilon, R)$ and $(\upsilon,R_{21}^{-1})$ are twist pairs. \rmkend
\end{example}

\subsection{Cylindrical structures \cite{AV22}}


Let $B \subseteq A$ be a right coideal subalgebra, \ie $\Delta(B) \subseteq B \ot A$.

\begin{definition}
A \emph{cylindrical structure} on $(A,B)$ is a datum $(\psi, J, K)$ where $(\psi,J)$ is a twist pair for $A$ and $K \in A^\times$ is a \emph{basic K-matrix}, \ie it satisfies\footnote{Note that $\psi$ is not assumed to preserve $B\subseteq A$. This is consistent with the case of the intertwining equation for the R-matrix \eqref{R:intw}, where ${\sf flip}$ does not necessarily preserve $\Delta(A) \subset A \ot A$.}  
\begin{align}
\label{K:intw} \Ad(K)|_B &= \psi|_B, \\
\label{K:support} (R^\psi)_{21} \cdot K_2 \cdot R &\in B \ot A, \\
\label{K:coproduct} \Delta(K) &= J^{-1} \cdot K_2 \cdot R^\psi \cdot K_1. 
\end{align}
\end{definition}
\noindent
As a consequence, the \emph{generalized reflection equation} is satisfied in $A \ot A$:
\eq{ \label{K:RE}
	K_1 \cdot (R^\psi)_{21} \cdot K_2 \cdot R = (R^{\psi \psi})_{21} \cdot K_2 \cdot R^\psi \cdot K_1.
} 

\begin{remark}
This notion\footnote{
	The support condition \eqref{K:support} is not included in the definition given in \cite{AV22}.
	Instead, it was considered in \cite{Kol20}, in the case $\psi$ is an automorphism of quasitriangular bialgebras, and, more generally, in \cite{AV25b}.
}  of a cylindrical structure, given in \cite{AV22}, naturally generalizes the formalism introduced by T. tom Dieck and R. H\"aring-Oldenburg in the case $\psi=\id$ \cite{tDHO98}
and by M. Balagovi\'c and S. Kolb in the case $\psi\colon A\to A$ is an automorphism of quasitriangular bialgebras and $J=R_{21}^{-1}$ \cite{BK19}. \rmkend
\end{remark}

\begin{example}
\hfill
\begin{enumerate}\itemsep1mm
\item Let $g\in A^\times$ be a group-like element. Then, $(\psi,J,K) = (\Ad(g),R,g)$ is a cylindrical structure on $(A,A)$.
\item Quasitriangular structures can be regarded as cylindrical structures as follows. 
Let $A^{\sf cop} \ot A$ be the quasitriangular bialgebra with coproduct $(\id \ot {\sf flip} \ot \id) \circ (\Del^{\sf op} \ot \Del)$ and universal R-matrix $R_{31} \cdot R_{24}$, and consider $\Delta(A)\subseteq A^{\sf cop}\ten A$ as a right coideal subalgebra. Then, $(\psi, J, K) = ({\sf flip}, 1 \ot 1, R)$ is a cylindrical structure on $(A^{\sf cop} \ot A,\Delta(A))$. 
\rmkend
\end{enumerate}
\end{example}

\subsection{Tensor K-matrices} \label{sec:tensorK}
Since $B$ is a right coideal subalgebra of $A$, $\Mod(B)$ is naturally a right module category over the braided monoidal category $\Mod(A)$. 
If the support condition \eqref{K:support} holds, then $\Mod(B)$ is equipped with an extra structure given in terms of the operator
\eq{ \label{tensorK:def}
	\TKM{} \coloneqq (R^\psi)_{21} \cdot K_2 \cdot R \in B \ot A,
} 
see \cite[App.~B]{AV25b} and \cite{LBG23,Lem23} for the more general setting of right comodule algebras. $\TKM{}$ is referred to as a \emph{tensor K-matrix} and satisfies 
\begin{align}
\Ad(\TKM{})|_{\Delta(B)} &= (\id \ot \psi)|_{\Delta(B)}, \label{tensorK:intw} \\
(\Delta \ot \id)(\TKM{}) &= (R^\psi)_{32} \cdot \TKM{13} \cdot R_{23}, \label{tensorK:coproduct1} \\
(\id \ot \Delta)(\TKM{}) &= J_{23}^{-1} \cdot \TKM{13} \cdot (R^\psi)_{23} \cdot \TKM{12}. \label{tensorK:coproduct2} 
\end{align}
Note that $K$ can be recovered from $\TKM{}$ by $K = (\eps \ot \id)(\TKM{})$.

\begin{remark}\hfill
\begin{enumerate}\itemsep1mm
\item As an operator on modules, the action of $\TKM{}$ gives a {\em braiding} on the module category $\Mod(B)$ over $\Mod(A)$, and yields an action of the {cylindrical braid group}, see \cite{Kol20}. 
However, in the setting of quantum symmetric pairs, $\Mod(A)$ is replaced by a proper subcategory $\cC\subset\Mod(A)$, which may not be closed under twisting by $\psi$. 
In these cases, the framework of braided module categories does not apply.
To remedy this, we introduced in \cite[App.~C]{AV25b} the notion of a {\em boundary} bimodule category.
\item
The case $\psi = \id$ was already considered in \cite{Enr07,Bro13,Kol20}.
In \cite[App.~B.5]{AV25b}, we also highlight the similarities with the framework of weakly universal K-matrices from \cite{KY20}.
\rmkend
\end{enumerate}
\end{remark}


\subsection{Gauge transformations and invariants}\label{ss:gauge}

There is a natural action of the group $A^\times$ on the set of cylindrical structures on $(A,B)$, which we refer to as \emph{gauge transformation}, see 
\cite[Rmk.~8.11]{AV22} and \cite[Sec.~3.5]{AV25a}. 
The action of $g \in A^\times$ is given by
\begin{equation}
	\label{cylindrical:gauge} 
	\begin{tikzcd}
		(\psi, J, K) \arrow[r, squiggly, "g"] & 
		(\gpsi,\gJ,\gK)=(\Ad(g)\circ \psi, (g\ten g)\cdot J\cdot \Delta(g)^{-1}, g\cdot K)\,,
	\end{tikzcd}
\end{equation}
and it extends to the tensor K-matrix as $\gTKM{}=(1 \ot g) \cdot \TKM{}$, see \cite[Sec.~5.10]{AV25b}. \\

In the following, we shall consider gauge transformations of the form $g = D \cdot K^{-1}$ where $D \in A^\times$ is a distinguished group-like element. In this case, the expression of the gauge-transformed cylindrical structure simplifies. 
We have $\gpsi=\Ad(g)\circ\psi$, and
\eq{
	\gJ = \Ad(1\ten D)(R^{\gpsi}), \qq \gK = D, \qq \gTKM{} = (R^{\gpsi})_{21} \cdot (1\ten D)\cdot R.
}

\begin{remarks}\label{rmk:gauge}\hfill
	\begin{enumerate}\itemsep0.25cm
	\item 
	The cylindrical structure $(\gpsi, \gJ, \gK)$ is canonical in the sense that it is a \emph{gauge invariant} of $(\psi,J,K)$: 
	it only depends on the gauge-equivalence class of $(\psi,J,K)$. 
	In other words, for any $g\in A^\times$, one has
	\begin{equation}
		\begin{tikzcd}
			(\psi, J, K) \arrow[r, squiggly, "g"] \arrow[d, squiggly, "D\cdot K^{-1}"']& 
			(\Ad(g)\circ \psi, (g\ten g)\cdot J\cdot \Delta(g)^{-1}, g\cdot K)
			\arrow[d, squiggly, "D\cdot (g \cdot K)^{-1}"]\\
			(\gpsi, \gJ, \gK)\arrow[r, equal]& (\gpsi, \gJ, \gK).
		\end{tikzcd}
	\end{equation}
	\item
	In \cite{AV25b}, we considered the special case $D=1$, which led us to determine a sufficient condition for \eqref{K:support}.
	Namely, let $\xi \coloneqq \Ad(K^{-1}) \circ \psi $ and set
	\eq{
		B_\xi \coloneqq \{ a \in A \, | \, (\xi \ot \id)(\Delta(a)) = \Delta(a) \}. \label{xi:def} 
	}
	Note that $B_\xi$ is the maximal right coideal subalgebra of $A$ contained in the fixed-point subalgebra $A^\xi$. 
	By \eqref{K:intw}, we have $B \subseteq A^\xi$ and hence $B\subseteq B_\xi$. 
	Then, the support condition \eqref{K:support} follows automatically whenever $B$ is maximal, \ie
	\eq{ \label{B:maximal}
		B = B_\xi.
	}
	Indeed, \eqref{K:RE} implies that $(R^\xi)_{21} \cdot R$ lies in $B_{\xi} \ot A$, and therefore so does $(R^\psi)_{21} \cdot K_2 \cdot R$. 
	\item 
	In Section \ref{ss:dualK} we will use this approach to build a gauge-invariant boundary transfer matrix. In particular, we will rely on the coproduct formula of the tensor K-matrix $\gTKM{}$, \ie
	\begin{align}
		\label{gTK:coproduct2} 
		(\id \ot \Delta)(\gTKM{}) &= \gJ^{-1}_{23} \cdot \gTKM{13} \cdot R^{ \gpsi}_{23} \cdot \gTKM{12}
	\end{align}
	as it follows immediately from the definitions.
	\rmkend 
	\end{enumerate}
\end{remarks}

\subsection{Boundary gauge transformation and transfer matrices}
\Omit{
\bartcomment{Here I believe we are setting things up in an awkward way. 
When I was preparing for my talks in Glasgow and Newcastle, it seemed much more natural to me to avoid the boundary gauge transformation and work directly with the cylindrical structure, as follows: \\
1. Lemma that states one has the ring homomorphism provided the twist pair $(\psi,J)$ satisfies $J_{V, W} = (((R_{V^{\psi}, W}^{\t_2})^{-1})^{\t_2})^{-1}$ for all $V,W \in \Mod_{\sf fd}(A)$.\\
2. Lemma that states that in any quasitriangular Hopf algebra the condition on $(\psi,J)$ is algebraic: $J = (1 \ot u) \cdot R^\psi \cdot (1 \ot u^{-1})$.\\
3. Theorem that states that in any balanced Hopf algebra the desired twist pair satisfying the algebraic condition exists in the gauge equivalence class of any given twist pair. \\
4. The proof of the previous theorem immediately permits the $K'$-generalization.\\

On the other hand, the boundary gauge transformation is important as it appears in the formula for the transfer matrix.  
Moreover, the referees seemed OK with our setup. So I will not insist :-). 
}
}

Let $[\Mod_{\sf fd}(A)]$ be the Grothendieck group of the category of finite-dimensional $A$-modules, \ie the abelian group generated by the isomorphism classes $[V]$ with $V \in \Mod_{\sf fd}(A)$ with the relations $[U]+[W]=[V]$ for every short exact sequence $0 \to U \to V \to W \to 0$ in $\Mod_{\sf fd}(A)$. 
Since $A$ is a bialgebra, $[\Mod_{\sf fd}(A)]$ is a ring, with multiplication map given by $[V] \cdot [W] = [V \ot W]$. 
Moreover, it is commutative, since $A$ is quasitriangular and thus $[V\ot W]=[W\ot V]$.

Given a cylindrical structure $(\psi, J, K)$ on $(A,B)$, we aim to construct a ring homomorphism from $[\Mod_{\sf fd}(A)]$ to $B$. Our approach relies on a distinguished family of gauge transformations.

\begin{definition}\label{def:bgt}
An element $g\in A^{\times}$ is called a \emph{boundary gauge transformation} for the twist pair $(\psi,J)$ if, for any $V,W\in\Mod_{\sf fd}(A)$,
\begin{equation}\label{gJR-equation}
	\gJ_{V, W}=(((R_{V^{\gpsi}, W}^{\t_2})^{-1})^{\t_2})^{-1}
\end{equation}
\ie $(g_V \ot g_W)\cdot J_{V,W} \cdot g^{-1}_{V \ot W}  = \Ad(g_V\ten 1)(((R_{V^{\psi}, W}^{\t_2})^{-1})^{\t_2})^{-1}$.
\end{definition}

Definition~\ref{def:bgt} is tailored to the following result.

\begin{theorem} \label{thm:t:Grothendieck}
Let $A$ be a quasitriangular bialgebra, $B \subseteq A$ a right coideal subalgebra, $(\psi,J,K)$ a cylindrical structure on $(A,B)$, and $g\in A^\times$ a boundary gauge transformation for $(\psi,J)$. 
Set $\gTKM{}\coloneqq(1\ten g)\cdot\TKM{}$. Then, there is a ring homomorphism $\TM\colon[\Mod_{\sf fd}(A)] \to B$ given by the assignment
	\eq{
		[V]\mapsto \TM^{(V)} \coloneqq \Tr_2 \big( \gTKM{\bullet,V} \big)
	}
where $V\in\Mod_{\sf fd}(A)$. 
In particular, $[\TM^{(V)}, \TM^{(W)}] = 0$ for all $V,W \in \Mod_{\sf fd}(A)$.
\end{theorem}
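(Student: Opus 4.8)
The plan is to verify three properties of the assignment $[V]\mapsto\TM^{(V)}=\Tr_2(\gTKM{\bullet,V})$: that it takes values in $B$, that it is additive on short exact sequences (hence well defined on $[\Mod_{\sf fd}(A)]$), and that it is multiplicative; commutativity of the image will then be a formal consequence. That $\TM^{(V)}\in B$ is immediate, since the support condition \eqref{K:support} gives $\TKM{}\in B\ot A$, so that $\gTKM{}=(1\ot g)\cdot\TKM{}\in B\ot A$ as well, whence $\gTKM{\bullet,V}\in B\ot\End_\bsF(V)$ and $\Tr_2$ lands in $B$. Additivity holds because for a short exact sequence $0\to U\to V\to W\to 0$ the second leg of $\gTKM{}$ acts through $A$ and thus preserves the submodule $U$; in a filtered basis $\gTKM{\bullet,V}$ is block upper triangular with diagonal blocks $\gTKM{\bullet,U}$ and $\gTKM{\bullet,W}$, and the partial trace of a block upper triangular operator is the sum of the partial traces of its diagonal blocks. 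Isomorphism invariance is clear.

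The heart is multiplicativity. I would invoke the coproduct identity for the gauge-transformed tensor K-matrix \eqref{gTK:coproduct2} (equivalently \eqref{tensorK:coproduct2} for the cylindrical structure $(\gpsi,\gJ,\gK)$),
\[
(\id\ot\Delta)(\gTKM{})=\gJ^{-1}_{23}\cdot\gTKM{13}\cdot R^{\gpsi}_{23}\cdot\gTKM{12}.
\]
Letting the second and third legs act on $V$ and $W$ respectively yields the factorization
\[
\gTKM{\bullet,V\ot W}=\gJ^{-1}_{V,W}\cdot\gTKM{\bullet,W}\cdot R^{\gpsi}_{V,W}\cdot\gTKM{\bullet,V}
\]
in $B\ot\End_\bsF(V)\ot\End_\bsF(W)$, where $\gTKM{\bullet,V}$ acts trivially on $W$ and $\gTKM{\bullet,W}$ trivially on $V$. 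I would then take the full partial trace as $\Tr_V\circ\Tr_W$. Since $\gTKM{\bullet,V}$ is trivial on $W$ and sits rightmost, it can be pulled out of $\Tr_W$, reducing the computation to the identity
\[
\Tr_W\big(\gJ^{-1}_{V,W}\cdot\gTKM{\bullet,W}\cdot R^{\gpsi}_{V,W}\big)=\TM^{(W)}\ot 1_V.
\]

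This last identity is exactly what the boundary gauge condition \eqref{gJR-equation} is engineered to produce, and establishing it is the main obstacle. Rewriting \eqref{gJR-equation} (in which $R_{V^{\gpsi},W}=R^{\gpsi}_{V,W}$) and using that the partial transpose $\t_2$ in the $W$-factor is an involution on finite-dimensional spaces, one gets $(\gJ^{-1}_{V,W})^{\t_2}=((R^{\gpsi}_{V,W})^{\t_2})^{-1}$. The trace--transpose identities \eqref{linalg} then give, for any $Y\in\End_\bsF(W)$,
\[
\Tr_W\big(\gJ^{-1}_{V,W}\cdot(1_V\ot Y)\cdot R^{\gpsi}_{V,W}\big)=\Tr_W\big((1_V\ot Y^{\t})\cdot(\gJ^{-1}_{V,W})^{\t_2}\cdot(R^{\gpsi}_{V,W})^{\t_2}\big)=\Tr(Y)\,1_V,
\]
the two middle factors cancelling by the rewritten \eqref{gJR-equation}. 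Carrying along the $B$-leg of $\gTKM{\bullet,W}=\sum_j c_j\ot Y_j$ turns the right-hand side into $\sum_j\Tr(Y_j)c_j\ot 1_V=\TM^{(W)}\ot 1_V$. Substituting back and tracing over $V$ yields $\TM^{(V\ot W)}=\Tr_V\big((\TM^{(W)}\ot 1_V)\cdot\gTKM{\bullet,V}\big)=\TM^{(W)}\cdot\TM^{(V)}$. A subtlety worth flagging is that the partial trace is not cyclic in the (noncommutative) $B$-leg, which is exactly why the two factors emerge in the order $\TM^{(W)}\TM^{(V)}$ rather than $\TM^{(V)}\TM^{(W)}$; tracking this ordering correctly is the delicate bookkeeping in the argument.

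Finally, because $A$ is quasitriangular the braiding gives an isomorphism $V\ot W\cong W\ot V$, so $[V\ot W]=[W\ot V]$ in $[\Mod_{\sf fd}(A)]$; combined with the previous paragraph this gives $\TM^{(W)}\TM^{(V)}=\TM^{(V\ot W)}=\TM^{(W\ot V)}=\TM^{(V)}\TM^{(W)}$, which is simultaneously the asserted commutativity $[\TM^{(V)},\TM^{(W)}]=0$ and the statement that $\TM$ is multiplicative in the standard order. Unitality $\TM^{(\bsF)}=1$ follows by applying $\id\ot\eps$ to $\gTKM{}$ and using $(\eps\ot\id)(R)=(\id\ot\eps)(R)=1$ together with $\eps(\gK)=1$, the latter dropping out of \eqref{K:coproduct} for the barred data. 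This produces the desired ring homomorphism $\TM$.
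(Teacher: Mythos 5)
Your proposal is correct and takes essentially the same route as the paper's proof: both rest on the coproduct identity \eqref{gTK:coproduct2}, the transpose--trace identities \eqref{linalg}, the cancellation $(\gJ^{-1}_{V,W})^{\t_2}\cdot(R^{\gpsi}_{V,W})^{\t_2}=\Id$ coming from \eqref{gJR-equation}, and the braiding isomorphism $V\ot W\cong W\ot V$ to convert the resulting product $\TM^{(W)}\cdot\TM^{(V)}$ into multiplicativity and commutativity. The only differences are presentational: you pull the rightmost factor $\gTKM{\bullet,V}$ out of $\Tr_W$ before transposing and work term by term, and you make explicit several points the paper leaves implicit (the support condition forcing $\TM^{(V)}\in B$, block-triangularity for additivity, and unitality via $\eps(\gK)=1$).
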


\noindent
We refer to $\TM^{(V)}$as the \emph{boundary transfer matrix}.
Note that $\eps(\TM^{(V)}) = \Tr (g_V \cdot K_V)$.

\begin{remark}
The existence of a boundary gauge transformation is a property of the twist pair $(\psi, J)$, since it corresponds to the latter being gauge-equivalent with any twist pair $(\ol{\psi}, \ol{J})$ satisfying \eqref{gJR-equation}. Whenever $(\psi, J)$ is part of a cylindrical structure, the choice of the boundary gauge transformation yields the boundary transfer matrix $\TM$ in terms of the gauge-transformed tensor K-matrix.\rmkend
\end{remark}

\begin{proof}[Proof of Theorem \ref{thm:t:Grothendieck}]
	Let $V,W \in \Mod_{\sf fd}(A)$ with $V\simeq W$. Then, by cyclicity of the trace, we have 
	$\TM^{(V)} = \TM^{(W)}$. 
	Moreover, for any short exact sequence $0 \to U \to V \to W \to 0$ in $\Mod_{\sf fd}(A)$, by additivity of the trace, we have $\TM^{(V)}=\TM^{(U)}+\TM^{(W)}$.
	It remains to prove that $\TM$ is multiplicative, \ie $\TM^{(V \ot W)} = \TM^{(V)} \cdot \TM^{(W)}$. 
	By definition, we have
	\eq{
		\TM^{(V \ot W)} = \Tr_2 \big( \gTKM{\bullet,V \ot W} \big) = \Tr_{2,3} \big( (\id \ot \Del)(\gTKM{})_{\bullet,V,W} \big)
	}
	where $\Tr_{2,3} = \Tr_2 \circ \Tr_3$ with the linear map $\Tr_3: B \ot \End_\bsF(V) \ot \End_\bsF(W) \to B \ot \End_\bsF(V)$ defined analogously.
	By \eqref{gTK:coproduct2}, we obtain
	\begin{align}
		\TM^{(V \ot W)} 
		&= \Tr_{2,3} \Big( (\gJ_{V,W}^{-1})_{23} \cdot (\gTKM{\bullet,W})_{13} \cdot \big( R_{V^{\ol{\psi}},W} \big)_{23} \cdot (\gTKM{\bullet,V})_{12} \Big) \\
		&= \Tr_{2,3} \Big( \big( (\gJ_{V,W}^{-1})_{23} \cdot (\gTKM{\bullet,W})_{13} \big)^{\t_3} \cdot \big( \big( R_{V^{\ol{\psi}},W} \big)_{23} \cdot (\gTKM{\bullet,V})_{12} \big)^{\t_3} \Big) \hspace{-5pt} \\
		&= \Tr_{2,3} \Big( (\gTKM{\bullet,W})_{13}^{\t_3} \cdot (\gJ_{V,W}^{-1})_{23}^{\t_3} \cdot \big( R_{V^{\ol{\psi}},W} \big)_{23}^{\t_3} \cdot 
		(\gTKM{\bullet,V})_{12} \Big) 
	\end{align}
	where we used \eqref{linalg}.
	Since $g$ is boundary, by \eqref{gJR-equation} we have
	\begin{align}
\TM^{(V \ot W)} = \Tr_{2,3} \Big( (\gTKM{\bullet,W})_{13}^{\t_3} \cdot (\gTKM{\bullet,V})_{12} \Big) = \Big( \Tr_2 \gTKM{\bullet,W}^{\t_2} \Big) \cdot \Big( \Tr_2 \gTKM{\bullet,V} \Big) = \TM^{(W)} \cdot \TM^{(V)}, 
	\end{align}
	where we used the first identity of \eqref{linalg} again.
	Since $\TM^{(V \ot W)} = \TM^{(W \ot V)}$, the result follows.
\end{proof}

\section{Boundary transfer matrices for balanced Hopf algebras} \label{sec:transfermatrices:balanced}

In this section, we further assume $A$ to be a balanced Hopf algebra. 
We then provide several examples of Theorem~\ref{thm:t:Grothendieck} by exhibiting explicit boundary gauge transformations.

\subsection{Balanced and sovereign Hopf algebras}\label{ss:balance-sovereign}
Let $A$ be a quasitriangular Hopf algebra with invertible antipode $S\colon A \to A$. 
Then, one has
\eq{ \label{R:antipode}
	(S \ot \id)(R) = R^{-1} \qq\mbox{and}\qq (\id \ot S)(R^{-1}) = R.
}
The invertible element $u\coloneqq m((S^{-1} \ot \id)(R))$, where $m\colon A \ot A \to A$ denotes the multiplication map, is called the \emph{Drinfeld element}.
It satisfies the identities\footnote{The element $u$ is $u_2^{-1}$ in \cite{Dri90}.}
\eq{
	\Ad(u) = S^2 \qq\mbox{and}\qq \Delta(u) = (R_{21} \cdot R)^{-1} \cdot (u \ot u)\,.
}
The Hopf algebra $A$ is called
\begin{itemize}\itemsep1mm
	\item \emph{balanced}  if there exists a central element $b \in A^\times$, called \emph{balance}, such that $\Delta(b) =  (b \ot b) \cdot R_{21} \cdot R$\,;
	\item \emph{sovereign} if there exists a group-like element $D\in A^\times$ such that $\Ad(D) = S^2$.
\end{itemize}
It is clear that $A$ is balanced if and only if it is sovereign, since (up to group-like central elements) the balance and the sovereign element satisfy $D=b\cdot u$.

\begin{remark}\hfill
	\begin{itemize}\itemsep1mm
		\item
		The two definitions are essentially implicit in \cite{Dri90}. 
		A balance, as presented above, is a  weaker version of a ribbon as considered in \cite{RT90}. 
		For the terminology of sovereign Hopf algebras, see \cite{Bic01}. 
		\item 
		A balance is also a K-matrix. Namely, if $b$ is a balance, then $(\psi,J,K) = (\id,R_{21}^{-1},b)$ is a cylindrical structure on $(A,A)$.
By taking $g=b^{-1}$ we see that it is gauge-equivalent to $(\id,R,1)$.
		\item A half-balance is an element $h \in A^\times$ such that $\Delta(h) = (h \ot h) \cdot R$ and $h^2$ is central (and hence a balance) \cite{KT09, ST09}. 
		Then a cylindrical structure on $(A,A)$ is given by $(\psi,J,K) = (\Ad(h),1 \ot 1,h)$. 
		By taking $g=h^{-1}$, we see that it is gauge-equivalent to $(\id,R,1)$.
		\rmkend
	\end{itemize}
\end{remark}

\subsection{R-matrices and dual modules}
Henceforth, we assume that $A$ is balanced, with balance $b$ and sovereign element $D$.
Let $V$ be an $A$-module. 
The algebraic dual $V^\vee$ is naturally equipped with two $A$-module structures. 
Namely, the right dual module $V^*$ and the left dual module ${}^*V$ are defined by the assignments
\eq{ \label{Hopf:dualmodule}
	a_{V^*} = (S(a)_V)^{\t}, \qq a_{{}^*V} = (S^{-1}(a)_V)^{\t},
}
respectively, where $a\in A$ and ${}^{\t}$ denotes the dual of a linear map.
For our purposes, we will consider only left dual modules.
For all $A$-modules $V,W$ one has $R_{V,{}^*W}=(R_{V,W}^{-1})^{\t_2}$
by \eqref{R:antipode}, and, similarly,
\eq{
	R_{V,{}^{**}W}=(((R_{V,W}^{-1})^{\t_2})^{-1})^{\t_2}.
}

When $V$ is finite-dimensional, the action of the sovereign element provides a canonical isomorphism of modules 
$V\to {}^{**}V$. 
Thus, we have, if $\dim(W)<\infty$,
\begin{align}
	\Ad(1 \ot D^{-1})(R)_{V,W} &= (((R_{V,W}^{-1})^{\t_2})^{-1})^{\t_2} 
	\label{R:doubledual2}
\end{align}

\subsection{Boundary transfer matrix for balanced Hopf algebras}\label{ss:bound-bal}
In the case of balanced Hopf algebras, the defining property of a boundary gauge transformation simplifies and becomes algebraic.

\begin{lemma}\label{lemma-gJR}
Let $g\in A^\times$, $(\psi, J)$ a twist pair, and $(\gpsi, \gJ)$ the gauge-transformed twist pair defined in \eqref{cylindrical:gauge}. Then $g$ is a boundary gauge transformation if 
\begin{equation}\label{gJR-equation-balance}
	\gJ=\Ad(1\ten D)(R^{\gpsi})
\end{equation}
\ie $(g \ot g)\cdot J \cdot \Delta(g)^{-1}  = \Ad(g\ten D)(R^{\psi})$.
\end{lemma}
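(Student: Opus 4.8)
The plan is to reduce Lemma~\ref{lemma-gJR} to a single identity on finite-dimensional modules --- the right-dual counterpart of \eqref{R:doubledual2} --- and then to prove that identity by evaluating both of its sides on the double right dual module.

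First I would unwind the two conditions on a pair $V,W\in\Mod_{\sf fd}(A)$. Since $\gJ=\Ad(1\ot D)(R^{\gpsi})$ by the hypothesis \eqref{gJR-equation-balance}, and $R_{V^{\gpsi},W}=(R^{\gpsi})_{V,W}$ by the definition of the twisted module $V^{\gpsi}$, the hypothesis evaluates to
\[
\gJ_{V,W}=\Ad(1\ot D_W)\big(R_{V^{\gpsi},W}\big).
\]
On the other hand, by Definition~\ref{def:bgt} the element $g$ is a boundary gauge transformation exactly when $\gJ_{V,W}=(((R_{V^{\gpsi},W}^{\t_2})^{-1})^{\t_2})^{-1}$ for all such $V,W$. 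Hence it suffices to establish, for all $X,W\in\Mod_{\sf fd}(A)$, the identity
\[
\Ad(1\ot D_W)(R_{X,W})=(((R_{X,W}^{\t_2})^{-1})^{\t_2})^{-1},
\]
which I denote $(\star)$, and then to specialise $X=V^{\gpsi}$.

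To prove $(\star)$ I would compute $R_{X,W^{**}}$, with $W^{**}=(W^*)^*$ the double right dual, in two ways. On one side, iterating \eqref{Hopf:dualmodule} gives $a_{W^{**}}=S^2(a)_W$; since $A$ is sovereign with $\Ad(D)=S^2$, this equals $\Ad(D_W)(a_W)$, so naturality of $R$ yields $R_{X,W^{**}}=\Ad(1\ot D_W)(R_{X,W})$, the left-hand side of $(\star)$. On the other side, I would record the right-dual analogue of the identity $R_{V,{}^*W}=(R_{V,W}^{-1})^{\t_2}$ used to obtain \eqref{R:doubledual2}, namely
\[
R_{X,W^*}=((R_{X,W})^{\t_2})^{-1}.
\]
This follows from the left-dual formula applied to $U=W^*$ together with the canonical module isomorphism ${}^*(W^*)\cong W$ and the involutivity of $\t_2$ (taken via $W^{\vee\vee}\cong W$): indeed $R_{X,W}=R_{X,{}^*(W^*)}=(R_{X,W^*}^{-1})^{\t_2}$, and applying $\t_2$ and then inverting gives the stated formula. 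Iterating it once more, with $W$ replaced by $W^*$, produces $R_{X,W^{**}}=(((R_{X,W}^{\t_2})^{-1})^{\t_2})^{-1}$, the right-hand side of $(\star)$. Comparing the two expressions for $R_{X,W^{**}}$ proves $(\star)$, and the lemma follows.

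I expect the only delicate point to be the bookkeeping of the inversion and partial-transpose operations. The right double dual $W^{**}$ (encoding $\Ad(1\ot D)$, i.e.\ $S^2$) produces the order $\t_2,\,{}^{-1},\,\t_2,\,{}^{-1}$, which is precisely the compositional inverse of the order $\,{}^{-1},\,\t_2,\,{}^{-1},\,\t_2$ appearing in \eqref{R:doubledual2} for the left double dual ${}^{**}W$ (encoding $\Ad(1\ot D^{-1})$, i.e.\ $S^{-2}$); thus $(\star)$ is exactly \eqref{R:doubledual2} read for right rather than left duals, with $D^{-1}$ replaced by $D$. The one genuinely new input beyond the left-dual case already treated in the text is the right-dual R-matrix formula $R_{X,W^*}=((R_{X,W})^{\t_2})^{-1}$, which is the $S\leftrightarrow S^{-1}$ counterpart of $R_{V,{}^*W}=(R_{V,W}^{-1})^{\t_2}$ and follows identically from \eqref{R:antipode} and \eqref{Hopf:dualmodule}; everything else reduces to the involutivity of $\t_2$ and the functoriality of $R$ under duality.
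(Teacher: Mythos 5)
Your proof is correct and is essentially the paper's own argument: both reduce the lemma to the identity $(((R^{\t_2}_{V,W})^{-1})^{\t_2})^{-1} = \Ad(1\ot D)(R)_{V,W}$ on finite-dimensional modules and then specialise $V \mapsto V^{\gpsi}$. The only (immaterial) difference is how that identity is obtained: the paper gets it by formally inverting the chain of operations in \eqref{R:doubledual2}, i.e.\ rewriting it as \eqref{R:doubledual2-bis}, whereas you rederive it from scratch via the right-dual formula $R_{X,W^*}=((R_{X,W})^{\t_2})^{-1}$ iterated on $W^{**}$ together with $\Ad(D)=S^2$ --- two routes to the same crossing-symmetry statement, both resting on \eqref{R:antipode} and sovereignty.
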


\begin{proof}
We can rewrite \eqref{R:doubledual2} as 
\begin{equation}
\label{R:doubledual2-bis}  (((R^{\t_2}_{V,W})^{-1})^{\t_2})^{-1} = \Ad(1 \ot D)(R)_{V,W}\,.
\end{equation}
This implies that $g$ is a boundary gauge transformation if and only if
\begin{align}\label{gJR-equation-balance-rep}
	\gJ_{V, W}=(((R_{V^{\gpsi}, W}^{\t_2})^{-1})^{\t_2})^{-1}=\Ad(1\ten D)(R)_{V^{\gpsi}, W},
\end{align}	
as required.
\end{proof}

\begin{remark}
More precisely, \eqref{gJR-equation-balance-rep} is equivalent to
\eqref{gJR-equation-balance} whenever the category $\Mod_{\sf fd}(A)$ separates points in $A$, see Remark~\ref{rmk:separation}.\rmkend	
\end{remark}

The most obvious examples of boundary transfer matrix maps arise from the following boundary gauge transformations.

\begin{prop}\label{prop:ubgt}
Let $A$ be a balanced Hopf algebra, $B\subseteq A$ a right coideal subalgebra, and $(\psi,J,K)$ a cylindrical structure on $(A,B)$ with tensor K-matrix $\TKM{}$.
\begin{enumerate}\itemsep1mm
	\item The Drinfeld element $u$ is a boundary gauge transformation for $(\psi,J) = (\id,R_{21}^{-1})$. Then, $\gTKM{}=(1\ten u)\cdot\TKM{}$ yields a boundary transfer matrix.
	\item The inverse of the balance $b^{-1}$ is a boundary gauge transformation for $(\psi,J) = (\Ad(D),R_{21}^{-1})$. Then, $\gTKM{}=(1\ten b^{-1})\cdot\TKM{}$ yields a boundary transfer matrix.
\end{enumerate}
\end{prop}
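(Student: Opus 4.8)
The plan is to verify, in each of the two cases, the gauge-transformed Drinfeld-twist identity \eqref{gJR-equation-balance} from Lemma~\ref{lemma-gJR}, since by that lemma this is precisely the condition guaranteeing that $g$ is a boundary gauge transformation. Thus the whole proposition reduces to two computations of the form $(g \ot g) \cdot J \cdot \Delta(g)^{-1} = \Ad(g \ten D)(R^{\psi})$, where in case (1) we take $g = u$ the Drinfeld element, $\psi = \id$, $J = R_{21}^{-1}$, and in case (2) we take $g = b^{-1}$ the inverse balance, $\psi = \Ad(D)$, $J = R_{21}^{-1}$.

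For case (1), the key inputs are the two coproduct/conjugation identities for the Drinfeld element recorded in Section~\ref{ss:balance-sovereign}, namely $\Ad(u) = S^2$ and $\Delta(u) = (R_{21} \cdot R)^{-1} \cdot (u \ot u)$, together with the relation $D = b \cdot u$ linking the sovereign element and the balance. First I would rewrite the target identity with $\psi = \id$ as $(u \ot u) \cdot R_{21}^{-1} \cdot \Delta(u)^{-1} = \Ad(u \ten D)(R)$. Substituting the coproduct formula for $u$ on the left gives $(u \ot u) \cdot R_{21}^{-1} \cdot (u \ot u)^{-1} \cdot (R_{21} \cdot R) = (u \ot u) \cdot R_{21}^{-1} \cdot (u^{-1} \ot u^{-1}) \cdot R_{21} \cdot R$. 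On the right-hand side, $\Ad(u \ten D)(R) = (u \ot D) \cdot R \cdot (u^{-1} \ot D^{-1})$, and I would use $\Ad(D) = S^2 = \Ad(u)$ (so that conjugation by $D$ and by $u$ agree on $A$) to rewrite the second leg, eventually reducing both sides to an expression in $\Ad(u \ot u)(R)$ and $R_{21}^{-1} R_{21} R$; the quasitriangularity relation $\Ad(R)\circ\Delta = \Delta^{\sf op}$ from \eqref{R:intw} and the antipode identity \eqref{R:antipode} are the tools to close the gap. The main obstacle will be bookkeeping the difference between conjugating the \emph{second} leg by $D$ versus by $u$: since $D = bu$ with $b$ central and $\Ad(D) = \Ad(u)$ as algebra automorphisms, the central correction factor $b$ must be tracked carefully through $R$, and one must confirm it cancels against the centrality used in the balance coproduct.

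For case (2), I would proceed analogously with $g = b^{-1}$ and $\psi = \Ad(D)$. Here centrality of the balance $b$ makes the left-hand side cleaner: $(b^{-1} \ot b^{-1}) \cdot R_{21}^{-1} \cdot \Delta(b^{-1})^{-1}$, and using $\Delta(b) = (b \ot b) \cdot R_{21} \cdot R$ together with centrality gives $\Delta(b^{-1})^{-1} = (b \ot b) \cdot R_{21} \cdot R$, so the expression collapses to $(b^{-1} \ot b^{-1}) \cdot R_{21}^{-1} \cdot (b \ot b) \cdot R_{21} \cdot R = R$. On the right-hand side the target is $\Ad(b^{-1} \ten D)(R^{\Ad(D)})$; since $R^{\Ad(D)} = (\Ad(D) \ot \id)(R)$ and $\Ad(D) = S^2$ with $(S \ot \id)(R) = R^{-1}$ from \eqref{R:antipode}, I would compute $R^{\Ad(D)} = (S^2 \ot \id)(R)$ and then conjugate, using centrality of $b$ to move the first-leg factor freely, to confirm that this also equals $R$. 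The two sides then agree, establishing \eqref{gJR-equation-balance}. The anticipated difficulty in this case is correctly evaluating $(S^2 \ot \id)(R)$: applying \eqref{R:antipode} twice gives $(S^2 \ot \id)(R) = (S \ot \id)(R^{-1})$, and one must verify via the antipode axioms that $(S \ot \id)(R^{-1}) = \Ad(D^{-1} \ot 1)(R)$ or the equivalent form needed, so that the $D$-conjugation on the second leg of the right-hand side precisely undoes the $S^2$-twist on the first leg.
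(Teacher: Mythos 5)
Your proposal is correct and follows essentially the same route as the paper: both arguments reduce to the criterion of Lemma~\ref{lemma-gJR}, and the paper merely streamlines your two case-by-case computations by observing that each gauge transformation yields $\gpsi=\Ad(D)$ and $\gJ=R$ (your case (2) computation $\gJ=R$ is exactly this, and in case (1) the coproduct of $u$ plus $\Ad(u\ot u)(R_{21}^{-1})=R_{21}^{-1}$ gives the same), so that everything comes down to the single identity $R=\Ad(D\ot D)(R)=\Ad(1\ot D)(R^{\gpsi})$, which holds since $\Ad(D)=S^2$ and $(S\ot S)(R)=R$ by \eqref{R:antipode}. One small slip in your case (2): $(S\ot\id)(R^{-1})=(S^{2}\ot\id)(R)=\Ad(D\ot 1)(R)$, \emph{not} $\Ad(D^{-1}\ot 1)(R)$; the identity you actually need --- fortunately covered by your hedge ``or the equivalent form needed'' --- is $\Ad(1\ot D)(R)=\Ad(D^{-1}\ot 1)(R)$, equivalently $\Ad(D\ot D)(R)=R$, which again follows from $(S\ot S)(R)=R$.
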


\begin{proof}	
In both cases we have $\gpsi=\Ad(D)$ and $\gJ=R$. Then, it is enough to observe that 
$R=\Ad(D\ten D)(R)=\Ad(1\ten D)(R^{\gpsi})$.
\end{proof}


\subsection{Dual K-matrix}\label{ss:dualK}

\nc{\tTKM}[1]{\wt{\bbK}_{#1}} 


We now describe the second example of a boudary transfer matrix map, which will be the most relevant for applications in quantum affine algebras (see Section~\ref{sec:affine}).

\begin{prop}\label{prop:wtKbg}
Let $A$ be a balanced Hopf algebra, $B\subseteq A$ a right coideal subalgebra, and  $(\psi,J,K)$ a cylindrical structure on $(A,B)$ with tensor K-matrix $\TKM{}$. Then, $\wt K\coloneqq D\cdot K^{-1}$ is a boundary gauge transformation and $\gTKM{}=(1\ten \wt K)\cdot\TKM{}$ yields a boundary transfer matrix.
\end{prop}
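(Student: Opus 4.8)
The plan is to reduce the statement to the balanced criterion of Lemma~\ref{lemma-gJR}, which asserts that $g \in A^\times$ is a boundary gauge transformation as soon as the gauge-transformed Drinfeld twist obeys $\gJ = \Ad(1 \ot D)(R^{\gpsi})$. Since $g = \wt K = D \cdot K^{-1}$ falls precisely into the class of gauge transformations of the form $D \cdot K^{-1}$ discussed in Section~\ref{ss:gauge}, it suffices to verify this single identity for such $g$; the result then follows immediately.

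To carry this out, I would first unwind the gauge action \eqref{cylindrical:gauge}. With $g = D K^{-1}$ one reads off $\gK = g \cdot K = D$ and $\gpsi = \Ad(D K^{-1}) \circ \psi$, while $\gJ = (g \ot g) \cdot J \cdot \Delta(g)^{-1}$. The essential input is the coproduct of $g$: as $D$ is group-like, $\Delta(g) = (D \ot D) \cdot \Delta(K)^{-1}$, and inverting the basic-K-matrix coproduct \eqref{K:coproduct} gives $\Delta(K)^{-1} = K_1^{-1} \cdot (R^\psi)^{-1} \cdot K_2^{-1} \cdot J$. Substituting, and writing $(g \ot g) = (D \ot D) \cdot K_1^{-1} \cdot K_2^{-1}$, the explicit factor $J$ cancels against the $J$ produced by $\Delta(g)^{-1}$, and the pair $K_2^{-1} \cdot K_2$ cancels as well, leaving
\[
\gJ = (D \ot D) \cdot K_1^{-1} \cdot R^\psi \cdot K_1 \cdot (D^{-1} \ot D^{-1}).
\]

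Next I would identify the right-hand side with $\Ad(1 \ot D)(R^{\gpsi})$. Expanding $R^{\gpsi} = (\gpsi \ot \id)(R) = (\Ad(D) \circ \Ad(K^{-1}) \circ \psi \ot \id)(R) = (D \ot 1) \cdot K_1^{-1} \cdot R^\psi \cdot K_1 \cdot (D^{-1} \ot 1)$ and then conjugating by $1 \ot D$, one uses that $1 \ot D$ commutes with every factor supported in the first tensor leg, namely $D \ot 1$, $K_1^{\pm 1}$ and $D^{-1} \ot 1$, to transport the outer copies of $D$ into the second leg; this reproduces exactly the displayed expression for $\gJ$. With $\gJ = \Ad(1 \ot D)(R^{\gpsi})$ in hand, Lemma~\ref{lemma-gJR} yields that $\wt K$ is a boundary gauge transformation.

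The computation is purely formal, and the only delicate point is the bookkeeping of tensor legs in the last step, where the factors $1 \ot D$ and $D \ot 1$ must be commuted correctly past the $K$'s, which act only in the first leg. In fact, this identity is precisely the simplification of the gauge-transformed cylindrical structure already recorded in Section~\ref{ss:gauge} for transformations $D \cdot K^{-1}$, so once that computation is granted the proposition is immediate from Lemma~\ref{lemma-gJR}.
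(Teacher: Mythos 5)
Your proof is correct and follows essentially the same route as the paper: both reduce the claim to the criterion $\gJ=\Ad(1\ot D)(R^{\gpsi})$ of Lemma~\ref{lemma-gJR} and verify it by unwinding the gauge action \eqref{cylindrical:gauge} together with the coproduct formula \eqref{K:coproduct}. The only cosmetic difference is that the paper packages your intermediate computation as the coproduct formula $\Delta(\wt K) = \wt K_1 \cdot \Ad(1 \ot D)(R^{\psi})^{-1} \cdot \wt K_2 \cdot J$ before computing $\gJ=\wt K_1\cdot\Ad(1\ot D)(R^{\psi})\cdot\wt K_1^{-1}$, which is algebraically identical to your direct expansion.
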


\begin{proof}
First, we observe that
\eq{
	\Delta(\wt K) = \wt K_1 \cdot \Ad(1 \ot D)(R^{\psi})^{-1} \cdot \wt K_2 \cdot J\,. \label{tildeK:coproduct}
}
Set $g=\wt K$. Then, $\gpsi=\Ad(\wt K)\circ \psi$, and 
\eq{	
	\gJ = (\wt K \ot \wt K) \cdot J \cdot \Delta(\wt K)^{-1} =\wt K_1 \cdot \Ad(1\ten D)(R^{\psi}) \cdot \wt K_1^{-1} =\Ad(1\ten D)(R^{\gpsi})\,.
}
Thus, the result follows from Lemma~\ref{lemma-gJR}.	
\end{proof}

\begin{remark}
As observed in Section~\ref{ss:gauge}, the cylindrical structure $(\gpsi, \gJ, \gK)$, obtained by gauging with $\wt K$, is given by $\gpsi=\Ad(\wt K) \circ \psi$ (so $\gpsi|_B = \Ad(D)|_B$), $\gJ=\Ad(1\ten D)(R^{\gpsi})$, and $\gK=D$, while the tensor K-matrix is 
\begin{equation}
	\gTKM{} \coloneqq (1 \ot \wt K) \cdot \TKM{}  = (R^{\ol \psi})_{21} \cdot (1 \ot D) \cdot R \,.
\end{equation}
Note that both $\gTKM{}$ and $\TM^{(V)}=\Tr_2(\gTKM{\bullet, V})$ are gauge invariants of $(\psi,J,K)$ in the sense of Remark~\ref{rmk:gauge} (2). 
\rmkend
\end{remark}

\subsection{Generalized dual K-matrices} \label{ss:gen-dualK}
Proposition \ref{prop:wtKbg} admits an immediate generalization.

\begin{prop}\label{prop:gen-wtKbg}
Let $A$ be a balanced Hopf algebra, $B \subseteq A$ right coideal subalgebras, $(\psi,J,K)$ a cylindrical structure on $(A,B)$ with tensor K-matrix $\TKM{}$, and $K'\in A^\times$ such that
\begin{equation}
	\Delta(K') = J^{-1} \cdot K'_2 \cdot R^\psi \cdot K'_1\,.
\end{equation}
Then, $\wt K\coloneqq D\cdot (K')^{-1}$ is a boundary gauge transformation for $(\psi,J)$
and yields a new boundary transfer matrix
$\TM^{(V)}=\Tr_2((1\ten \wt K)\cdot\TKM{})$.
\end{prop}


\begin{proof}
The proof is identical to that of Proposition~\ref{prop:wtKbg}.
\end{proof}
\begin{remarks} \label{rmk:transfermatrices} \hfill
	
	\begin{enumerate}\itemsep1mm
		\item Both examples described in Proposition~\ref{prop:ubgt} are special cases of the construction above. 
	Recall that $(\id, R_{21}^{-1}, b)$ is a cylindrical structure on $(A,A)$ and hence so is $(\Ad(D), R_{21}^{-1}, Db)$. 
	Therefore, if $(\id, R_{21}^{-1}, K)$ is a cylindrical structure on $B$, we obtain $\wt K=Db^{-1}=u$ or $\wt K=Db^{-1}D^{-1}=b^{-1}$ as a boundary gauge transformation by choosing $K'=b$ or $K'=Db$, respectively.
	\item 
	Whenever $K'$ is part of a cylindrical structure of the form $(\psi, J, K')$ for some coideal subalgebra $B'\subseteq A$, the construction above is reminiscent of Sklyanin's \emph{two-boundary} transfer matrix \cite{Skl88}, see Section \ref{sec:quantumintegrability}.
	As before, the resulting boundary transfer matrix map $\TM^{(V)}=\Tr_2(\gTKM{\bullet, V})$ is invariant under gauge transformations acting simultaneously on both $(\psi,J,K)$ and $(\psi, J, K')$. 
	
	Moreover, the image of the corresponding boundary transfer matrix map $\TM \colon [\Mod_{\sf fd}(A)]\to B$ is contained in $B\cap S(B')$. 
	The proof relies essentially on the same arguments used in \cite[Prop.~3]{Skl88}. 
	When $B'=B$, as in Section~\ref{ss:dualK}, this condition may be very restrictive. 
	For instance, in the case of quantum groups of finite type, it implies that $\TM$ is rather trivial, see Section~\ref{sec:boundarytransfermatrices:qgpfinite}. 
	It is therefore necessary to consider boundary transfer matrix maps with $B'\neq B$ in order to get nontrivial elements. For instance, the boundary transfer matrix map associated to the Drinfeld element recovers the morphism constructed by Kolb in \cite{Kol20},
	which surjects over the center of a quantum symmetric pair subalgebra, see Remark~ \ref{rmk:kolb-gendual} (\ref{rmk:kolb}). In contrast, this trivialization does not occur in the case of transfer matrices depending on a spectral parameter for quantum affine algebras, as we observe in Section \ref{sec:quantumintegrability}.
	\item 
Although we had to carefully select the cylindrical structure, Proposition \ref{prop:wtKbg} can be seen as the boundary counterpart to the following statement about universal transfer matrices defined in terms of a quasitriangular bialgebra $A$ universal R-matrix $R$. 
Namely, a ring homomorphism $[\Mod_{\sf fd}(A)] \to A$ is defined by the assignment
\[
[V] \mapsto \TM^{(V)}_{\sf per} = \Tr_V R_{\bullet,V} \in A
\]
with $R_{\bullet,V}$ also known as \emph{(universal) L-operator}.
This construction finds its origin in the transfer matrices for vertex models and quantum spin chains with periodic boundary conditions, see, \eg \cite{Ba82}, and plays a key role in the definition of q-characters of quantum affine algebras, see \cite{FR99,FM01}.
It permits the following generalization:
\[
[V] \mapsto \TM^{(V)}_{{\sf per},a} = \Tr_V ((1 \otimes a_V) \cdot R_{\bullet,V}) \in A
\]
where $a \in A^\times$ is any group-like element.
Proposition \ref{prop:gen-wtKbg} can be viewed as the boundary counterpart of this generalization.
\rmkend
	\end{enumerate}
\end{remarks}


\subsection{Completions}\label{ss:completions}
The purely algebraic setting described above is too restrictive, and does not include R-matrices and K-matrices arising from quantum groups. Indeed, these operators are not algebraic, but rather {\em topological}, \ie they belong to a suitable completion of the algebra, which is representation-theoretic in nature.

More precisely, for any 
algebra $A$ and any a full subcategory $\cC \subseteq \Mod(A)$, we consider the 
algebra $\End(\FF{\cC}{})$ of endomorphisms of the forgetful functor $	\FF{\cC}{}: \cC \to \vect$. An element $f\in\End(\FF{\cC}{})$ is a $\cC$-tuple of linear maps $f=(f_V: V \to V)_{V \in \cC}$ such that $\iota \circ f_V = f_W \circ \iota$ for all intertwiners $\iota: V \to W$ in $\cC$.
Linear combinations and compositions of such tuples are defined entrywise.

\begin{remark}\label{rmk:separation}
$\End(\FF{\cC}{})$ is usually referred to as the \emph{completion of $A$ with respect to $\cC$}, since there is a natural algebra homomorphism $\iota_{\cC}\colon A \to \End(\FF{\cC}{})$ given by the assignment $a \mapsto (a_V)_{V \in \cC}$. Whenever $\iota_{\cC}$ is injective, $A$ identifies with a subalgebra of $\End(\FF{\cC}{})$, and $\cC$ is said to \emph{separate points in $A$}. 
\hfill\rmkend
\end{remark}

If $A$ is a bialgebra and $\cC \subset \Mod(A)$ is a tensor subcategory then for $n \in \Z_{\ge 0}$ we consider the algebra $\End(\FF{\cC}{n})$ of endomorphisms of the $n$-fold forgetful functor $\FF{\cC}{n}: \cC^n \to \vect$ given by 
\eq{
	\FF{\cC}{n}(V_1,\ldots,V_n) = \FF{\cC}{}(V_1 \ot \cdots \ot V_n).
}
$\End(\FF{\cC}{})$ is not a bialgebra in general, but it is equipped instead with a cosimplicial structure, see, \eg \cite{Dav07,ATL19}. In particular, the coproduct map $\Delta: A \to A \ot A$ lifts to a morphism 
$\Del : \End(\FF{\cC}{}) \to \End(\FF{\cC}{2})$ given by $\Del(f)_{V,W} = f_{V \ot W}$ for all $V,W\in\cC$. 
More generally, let $B$ be a right coideal subalgebra of $A$, $\cD \subseteq \Mod(B)$ a module category over $\cC$, and $\FF{\cD \boxtimes \cC}{} : \cD \boxtimes \cC \to \vect$ the forgetful functor given by $(M,V) \mapsto \FF{\cD}{}(M \ot V)$. 
Then, the coproduct $\Delta: B \to B \ot A$ lifts to a morphism $\Del: \End(\FF{\cD}{}) \to \End(\FF{\cD \boxtimes \cC}{})$.

\begin{remark}
In this setting, the gauge transformation induced by a K-matrix yields a map $\gpsi=\Ad(K)\circ\psi$, which is an automorphism of $\End(\FF{\cC}{})$, but it does not generally preserve $A\subseteq\End(\FF{\cC}{})$.\rmkend
\end{remark}


\Omit{
	
\subsection{Miscellanea}\bartcomment{Fancy ideas and loose ends, to be discussed. I am keen to include the left-right symmetry of $\TM^{(V)}$, especially if we can give some more meaning to $\wt K$.}

Here $\wt R=\Ad(1\ten D)(R^{-1})=(1\ten S)(R)$.

\subsubsection{Action of the antipode on $K$}

To obtain a formula for $S(K)$, it suffices to do this in the case where $\psi$ is a coalgebra antiautomorphism (at least for DJ quantum groups where we have the semistandard gauge). 
So $J= 1 \ot 1$, $\psi \circ S = S^{-1} \circ \psi$. 
Applying $m \circ (\id \ot S)$ to $\Del(K) = K_2 \cdot R^\psi \cdot K_1$ yields
\eq{
S(K) = m\big( (\Ad(K^{-1}) \circ \psi \ot S)(R) \big)^{-1} \cdot K^{-1} = m\big( (R^{\psi'})^{-1} \big)^{-1} \cdot K^{-1} 
}
where
\eq{
\psi' = \Ad(K^{-1}) \circ \psi \circ \Ad(D^{-1}).
}

Applying $S$ to the linear axiom \eqref{K:intw} and rearranging we obtain 
\eq{
S(K) \cdot D \cdot \psi(b) = b \cdot S(K) \cdot D, \qq \text{for all } b \in S(B).
}

\subsubsection{Conceptual formalism for $\wt K$}

We should attempt to conceptually describe the full axiomatic scheme around $\wt K$. 
In particular, what is the natural \emph{left} coideal subalgebra $\wt B \subset A$ that is associated to $\wt{K}$?
Note that $S(B)$ is a left coideal subalgebra.
The following axiomatics follow immediately from the fact that $(\psi,J,K)$ is a cylindrical structure on $(A,B)$:
\begin{align}
\Ad(\wt K) \circ \psi|_B &= S^2|_B, \\
\wt R \cdot \wt K_2 \cdot (R^\psi)_{21}^{-1} &\in B \ot A, \label{tildeK:support} \\
\Delta(\wt K) &= \wt K_1 \cdot \wt R^\psi \cdot \wt K_2 \cdot J,
\end{align}
as well as the generalized reflection equation
\eq{ \label{tildeK:RE}
\wt K_1 \cdot \wt R^\psi \cdot \wt K_2 \cdot (R^{\psi\psi}_{21})^{-1} = R^{-1} \cdot \wt K_2 \cdot (\wt R^\psi)_{21} \cdot \wt K_1 \qq \in A \ot A.
}

This could go as follows: set $\wt B = S(B)$.
From the above relation $\Ad(\wt K^{-1} \cdot D)|_{B} = \psi|_{B}$ one gets, by applying $S^{-1}$, using $\Ad(D) = S^2$,
\eq{
\Ad(S^{-1}(\wt K))|_{\wt B} = S^{-1} \circ \psi \circ S^{-1}|_{\wt B}.
}
Note that, if $\psi$ is a coalgebra antiautomorphism then $\psi \circ S^{-1} = S \circ \psi$. 
If $A$ is a quantum group then $\psi$ is a coalgebra antiautomorphism, up to a factor $\Ad(T_\theta)$.
From
\eq{
\Delta(T_\theta) = (T_\theta \ot T_\theta) \cdot R_\theta
}
we readily obtain $\eps(T_\theta) =1$ and hence $S(T_\theta) = u_{2,\theta} \cdot T_\theta^{-1}$ where $u_{2,\theta}$ is the $\theta$-analogue (\ie corresponding to the parabolic Hopf subalgebra defined by the subdiagram $X$) of the element $u_2 = u_4$ from \cite[Remarks after Cor. 4.2.4]{CP95}.
If we can somehow absorb this factor $u_{2,\theta}$ then $\psi \circ S^{-1} = S \circ \psi$ in general and we get
\eq{
\Ad(S^{-1}(\wt K^{-1}))|_{\wt B} = \psi|_{\wt B}.
}
Is it nicer? Note that $S^{-1}(\wt K^{-1}) = S^{-1}(K^{-1}) \cdot D^{-1} = D^{-1} \cdot S(K^{-1})$.\\

Another possible idea is to take the ``square root'' of $S^2 = \Ad(D)$ and impose instead $S = \Ad(\wt D) \circ \sigma $ where $\sigma$ is a suitable algebra anti-automorphism and $\wt D$ is a suitable invertible element; then we can apply $S$ to the axioms for $K$ and obtain axioms for $\wt K$...?
For $U_q(\fkg)$ we can take $\sigma$ to be the map multiplying $E_i$ and $F_i$ by $-1$ and inverting $\Kg{h}$ and we can take $\wt D$ to be an element in the weight completion of $U_q(\fkg)$ of the form given in \cite[Sec. 4.9]{AV22}.

\subsubsection{Left-right symmetry of the transfer matrix and the subtlety with $B \cap S(B)$}

Independently we can motivate $S(B)$ as follows, which also makes manifest an expected left-right symmetry of the transfer matrix.
Consider the following tensor analogue of $\tilde K$ (see \eqref{tildeK:support}): 
\eq{
\tTKM{} = \wt R \cdot \wt K_2 \cdot (R^\psi)_{21}^{-1} \in B \ot A.
}
One can show that the element $\Del(\TM^{(V)})$, which a priori lies in $B \ot A$, actually satisfies
\eq{ \label{transfermatrix:coproduct}
\Del(\TM^{(V)}) = \Tr_3 \big( (S \ot \id)(\tTKM{})_{\bullet V} \big)_{23} \cdot (\TKM{\bullet,V})_{13} 
}
and hence lies in $B \ot S(B)$. 
Applying the counit to the first leg of \eqref{transfermatrix:coproduct}, we get $\TM^{(V)} \in S(B)$. Hence we would obtain the following:
\eq{
\TM^{(V)} \in B \cap S(B).
}
If $(A,B)$ is a quantum symmetric pair, this is a rather small subalgebra, containing $U_q(\g_X) U_q(\fkh^\phi)$ and it is unclear if it is bigger than that. 
Indeed, if $A = U_q(\fksl_2)$ we can choose $B = \langle F - q^{-1} \gamma E k^{-1} + \sigma k^{-1} \rangle$ and then one obtains $B \cap S(B) = \bsF$. 
However, $\TM^{(V)}$ lies in a completion and perhaps there is no issue at all. \\

The proof of \eqref{transfermatrix:coproduct} is inspired by the proof of \cite[Prop. 3]{Skl88} (refactorization property of two-boundary transfer matrices).
Noting that 
\[
(S \ot S^{-1})(\wt R) = (\id \ot S^{-1})(R), \qq (S \ot S^{-1})((R^\psi)^{-1}_{21}) = (\id \ot S^{-1})((R^\psi)_{21}),
\]
we deduce
\begin{align}
(S \ot \id)(\tTKM{})_{\bullet, V} &= (S \ot S^{-1})(\tTKM{})_{\bullet, V^*}^{\t_2} \\
&= \big( (S \ot S^{-1})((R^\psi)_{21}^{-1}) \cdot S^{-1}(\wt K)_2 \cdot (S \ot S^{-1})(\wt R) \big)_{\bullet, V^*}^{\t_2} \\
&= \big( (\id \ot S^{-1})((R^\psi)_{21}) \cdot S^{-1}(\wt K)_2 \cdot (\id \ot S^{-1})(R) \big)_{\bullet, V^*}^{\t_2} \\
&= \Big( ((R_{V^\psi,\bullet})_{21})^{\t_2} \cdot (\wt K_V)_2^{\t_2} \cdot (R_{\bullet,V})^{\t_2} \Big)^{\t_2}, \\
&= \Tr_3 \big( (\wt K_V)_3 \cdot (R_{V^\psi,\bullet})_{31} \cdot ({\sf flip}_{V,V})_{23} \cdot (R_{\bullet,V})_{13} \big).
\end{align}
Hence,
\[
((S \ot \id)(\tTKM{})_{\bullet, V})_{23} = \Tr_4 \big( (\wt K_V)_4 \cdot (R_{V^\psi,\bullet})_{42} \cdot ({\sf flip}_{V,V})_{34} \cdot (R_{\bullet,V})_{24} \big).
\]
and
\[
({\sf flip}_{V,V})_{34} \cdot (R_{\bullet,V})_{24} \cdot   ( \TKM{\bullet,V} )_{13}  = ( \TKM{\bullet,V} )_{14} \cdot ({\sf flip}_{V,V})_{34} \cdot (R_{\bullet,V})_{24}.
\]
Therefore,
\begin{align}
& \Tr_3 \Big( \big( (S \ot \id)(\tTKM{})_{\bullet, V} \big)_{23} \cdot ( \TKM{\bullet,V} )_{13} \Big) = \\
&= \Tr_{3,4} \Big( (\wt K_V)_4 \cdot (R_{V^\psi,\bullet})_{42} \cdot ( \TKM{\bullet,V} )_{14} \cdot ({\sf flip}_{V,V})_{34} \cdot (R_{\bullet,V})_{24}   \Big) \\
&= \Tr_3 \Big( (\wt K_V)_3 \cdot (R_{V^\psi,\bullet})_{32} \cdot ( \TKM{\bullet,V} )_{13} \cdot (R_{\bullet,V})_{23}   \Big) \\
&= \Tr_3 (\Delta \ot \id) \big( (1 \ot \wt K) \cdot \TKM{\bullet,V} \big)_{\bullet,\bullet,V} \\
&= \Delta \big( \Tr_2 \gTKM{\bullet,V} \big) ,
\end{align}
as required.
}


\section{Quantum groups and R-matrices} \label{sec:quantumgroups}

\subsection{Kac-Moody algebras  \cite{Kac90}}\label{ss:KM}

Let $\g$ be the Kac-Moody algebra defined over $\bbC$ in terms of an indecomposable symmetrizable generalized Cartan matrix $A=(a_{ij})_{i,j\in\IS}$ with $\IS$ a finite set. 
Fix non-negative integers $\{\de{i}\;|\; i\in\IS\}$ such that the matrix $(\de{i}a_{ij})_{i,j\in\IS}$ is symmetric.
Let $\h\subset\g$ be the standard Cartan subalgebra.
Let $\{ e_i,f_i\}_{i \in \IS}$ be the Chevalley generators with corresponding simple roots $\al_i \in \fkh^*$ and simple coroots $h_i = [e_i,f_i] \in \fkh$.
We consider the subalgebras
\[
\g' = [\g,\g] = \C \langle \{ e_i,f_i \}_{i \in \IS} \rangle, \qq \qq \h' = \h \cap \g' = \bigoplus_{i \in \IS} \C h_i.
\] 
Let $\iip{\cdot}{\cdot}$ be the nondegenerate invariant bilinear form on $\g$ depending on a chosen linear complement $\h''$ of $\h'$ in $\h$.
The form $\iip{\cdot}{\cdot}$ induces a nondegenerate pairing between the opposite subalgebras in the triangular decomposition $\g \cong \fkn^- \oplus \fkh \oplus \fkn^+$, where $\fkn^+ \coloneqq  \bbC\langle \{ e_i \}_{i \in \IS} \rangle$, $\fkn^- \coloneqq  \bbC\langle \{ f_i \}_{i \in \IS} \rangle$, and we obtain a nondegenerate pairing between the subalgebras $\fkb^\pm \coloneqq  \langle \fkn^\pm, \fkh \rangle$.

We consider the root and coroot lattices:
\eq{
\Qlat = \Sp_\Z \{ \al_i \}_{i \in \IS} \subset \h^*, \qq \Qlat^{\vee} = \Sp_\Z \{ h_i \}_{i \in \IS}  \subset \h.
}
Having chosen a basis $\{d_{1},\dots, d_{\corank(A)}\}$ of $\h''$ such that $\al_i(d_r) \in \Z$ for all $i \in \IS$, $1 \le r \le \corank(A)$, we define the \emph{extended} coroot lattice and hence the weight lattice:
\[
\Qvext=\Qlatv \oplus \Sp_\Z \{ d_{1},\dots, d_{\corank(A)} \} \subset{\h}
\aand
\Pext=\{\lambda\in{\h}^*\;|\;\lambda(\Qvext)\subset\bbZ\}.
\]
The bilinear form on $\fkh^*$ dual to $\iip{\cdot}{\cdot}$ will be denoted by the same symbol.
Note that its restriction to $\Pext \times \Pext$ takes values in $\frac{1}{m}\Z$ for some $m \in \Z_{>0}$.

\subsection{Drinfeld-Jimbo quantum groups \cite{Jim85,Dri86,Lus94}}\label{ss:quantum-group}

Denote by $\bsF$ the algebraic closure of $\bbC(q)$ where $q$ is an indeterminate.
The quantum Kac-Moody algebra associated to ${\g}$ is the unital associative $\bsF$-algebra $\Uqg$ with generators $\Eg{i}$ and $\Fg{i}$ ($i\in\IS$), and $\Kg{h}$ ($h\in\Qvext$) subject to:
\eq{
\qq \qq
\left.
\begin{gathered}
	\Kg{h}\Kg{h'}=\Kg{h+h'}, \qq \Kg{0}=1, \\	
	\Kg{h}\Eg{i}=q^{\cp{\rt{i}}{h}}\Eg{i}\Kg{h}, \qq \Kg{h}\Fg{i}=q^{-\cp{\rt{i}}{h}}\Fg{i}\Kg{h},\\
 [\Eg{i},\Fg{j}]=\drc{ij}\frac{\Kg{i}-\Kg{i}^{-1}}{q_i-q_i^{-1}}, \label{Uqag:relns3} 
\end{gathered}
\right\}
{ \text{ for all } i,j\in\IS, \atop h,h'\in\Qvext,}
}
where $q_i \coloneqq  q^{\de{i}}$, $\Kg{i}^{\pm1} \coloneqq  \Kg{\pm \de{i}\cort{i}}$,
together with the quantum Serre relations (see, \eg \cite[3.1.1 (e), Cor.~33.1.5]{Lus94}).
We consider the Hopf algebra structure on $\Uqg$ uniquely determined by the following coproduct formulae
\eq{
\Delta(\Eg{i}) = \Eg{i}\ten1+\Kg{i}\ten\Eg{i}, \qu \Delta(\Fg{i}) = \Fg{i}\ten\Kg{i}^{-1}+1\ten\Fg{i}, \qu \Delta(\Kg{h}) = \Kg{h}\ten\Kg{h}, 
}
for any $i\in\IS$ and $h\in\Qvext$.
In particular, the antipode $S$ is defined by
\eq{
\label{Uqg:S}	S(\Eg{i})	= -\Kg{i}^{-1}\Eg{i},	\qq  S(\Fg{i}) = -\Fg{i}\Kg{i}, \qq S(\Kg{h}) = \Kg{h}^{-1}.
}
We consider the usual triangular decomposition $\Uqg = \Uqnm \cdot \Uqh \cdot \Uqnp$ in terms of the subalgebras $\Uqnp$, $\Uqh$, $\Uqnm$ generated by $\{ E_i \}_{i \in \IS}$, $\{ \Kg{h} \}_{h\in\Qvext}$ and $\{ F_i  \}_{i \in \IS}$, respectively.

\subsection{Modules over $\Uqg$} \label{sec:Uqgcategories}
Let $V$ be an $\Uqh$-module. For any $\mu\in\Pext$, the corresponding weight space in $V$ is 
\[V_\mu\coloneqq\{v\in V\,\vert\,\forall h\in\Qvext,\,\,\Kg{h}\,v=q^{\mu(h)}v\}.\]
A $\Uqg$--module $V$ is a (type $\bf 1$) {\it weight module} if $V=\bigoplus_{\mu\in\Pext} V_\mu$.
Such a weight module $V$ is said to be
\begin{itemize}\itemsep1mm
	\item\label{cond:int} {\it integrable} if all $E_i,F_i$ act locally nilpotently;\footnote{This implies that $V$ is completely reducible as a (possibly infinite) direct sum of simple finite--dimensional modules over the $U_q(\fksl_2)$-subalgebra generated by $E_i, \Kg{i}^{\pm1}, F_i$, for each $i \in \IS$.}
	\item\label{cond:O} in {\it category $\cO$} if the action of $\Uqnp{}$ is locally finite.\footnote{
		This implies in particular that, for any $v\in V$, $\Uqnp_{\beta} v=0$ for all but finitely many $\beta\in\sfQ^+$. Therefore, $\cO$ coincides with the category $\cC^{\operatorname{hi}}$ used in \cite[Sec.~3.4.7]{Lus94}.}
\end{itemize}
We denote by $\WUqg$, $\OUqg$, $\OintUqg$ the full subcategories of weight, category $\cO$, integrable category $\cO$ modules in $\Mod(\Uqg)$, respectively. 
The category $\WUqg$ is only monoidal, while the category $\OUqg$
is braided, see Section~\ref{sec:uniR}. The category $\OintUqg$ is a semisimple braided subcategory of $\OUqg$, whose simple modules are classified 
by dominant integral weights \cite[Thm~6.2.2, Cor.~6.2.3]{Lus94}. 
Note that $\Oint$ contains the category $\Mod_{\sf fd}(\Uqg)$ of finite-dimensional (type-$\bf 1$) $\Uqg$-modules, which admits nontrivial objects only if $\dim(\fkg)<\infty$.

By \cite[Thm.~3.1]{ATL24}, $\Uqg^{\ot n}$ embeds into both $\End(\FF{\cO}{n})$ and $\End(\FF{\cO_{\sint}}{n})$.

\subsection{The universal R-matrix \cite{Dri86, Lus94}} \label{sec:uniR}
The universal R-matrix of $U_q(\fkg)$ is the canonical element of the pairing between the two Hopf subalgebras $U_q(\fkb^\pm) \coloneqq  U_q(\fkh) \cdot U_q(\fkn^\pm)$.
It corresponds to an element  $R \in \End(\FF{\cO}{2})$, which factorizes as $R = \kappa \cdot \Xi$ where
\begin{itemize}\itemsep0.25cm
	\item 
	$\kappa = (\kappa_{V,W})_{V,W \in \cW} \in \End(\FF{\cW}{2})$ is a diagonal operator, which 
	acts on $V_\la \ot W_\mu$ as multiplication by $q^{(\la,\mu)}$\,;
	\item 
	$\Xi \in \End(\FF{\cO}{2})$ is the quasi R-matrix, \ie it is the weight-zero operator of the form
	$\Xi = \sum_{\la \in \Qlat^+} \Xi_\la$ where  $\Xi_\la \in U_q(\fkn^-)_{-\la} \ot U_q(\fkn^+)_\la$ is the canonical element of the pairing between $U_q(\fkn^-)_{-\la}$ and $U_q(\fkn^+)_\la$. 
\end{itemize}
The identity \eqref{R:intw} is satisfied in $\End(\FF{\cO}{2})$ and the identities \eqref{R:coproduct} in $\End(\FF{\cO}{3})$.
The assignment $(V,W) \mapsto \text{flip}_{V,W} \cdot R_{V,W}$ defines a braiding on $\cO$ and $\Oint$.

\subsection{Sovereign element}
Choose $\rho \in \Plat$ so that $\rho(h_i)=1$ for all $i \in \IS$.
Let $D = (D_V)_{V \in \cO} \in \End(\FF{\cO}{})$ be the operator defined by 
\eq{ \label{D:def}
D_V \cdot v = q^{-2(\rho,\mu)} v\,
}
for any $V\in\cO$, $\mu\in\Plat$, and $v\in V_\mu$.
Note that $\Delta(D) = D \ot D$, $\Ad(D)$ preserves $\Uqg \subset \End(\FF{\cO}{})$ and $\Ad(D) = S^2$. 
Thus, $D$ can be viewed as a sovereign element for $\Uqg$. 
The Drinfeld element $u$ also belongs to $\End(\FF{\cO}{})$. 
Hence, by Section~\ref{ss:balance-sovereign}, the element $b\coloneqq Du^{-1}\in\End(\FF{\cO}{})$ is central and it can be viewed as a balance for $\Uqg$. 



\section{Quantum symmetric pairs and K-matrices} \label{sec:QSPK}
We provide a brief summary of the theory of quantum symmetric pairs and their K-matrices, and survey some recent works on related topics.
The main references for proofs and further details in this section are \cite{Kol14, RV22, AV22, AV25b}.

In 	\cite{AV22, AV25b}, we constructed a family of (basic and tensor) K-matrices arising from quantum symmetric pairs, whose action is defined on $\Oint$ and, in fact, on modules in $\cO$ satisfying a weaker integrability condition. 
Henceforth, for simplicity of the exposition, we consider only the category $\Oint$, and we set
\begin{equation}
	\FF{}{} \coloneqq \FF{\Oint}{}, \qq \FF{}{n} \coloneqq \FF{\Oint}{n}. 
\end{equation}

\subsection{Pseudo-involutions and their q-deformations} \label{sec:pseudoinv} 

A Lie algebra automorphism $\theta: \fkg \to \fkg$ is said to be \emph{of the second kind} if $\theta(\fkb^+)$ is obtained by the adjoint action of the Kac-Moody group on the \emph{opposite} Borel subalgebra $\fkb^-$; this is equivalent to $\dim(\fkn^+ \cap \theta(\fkn^+)) < \infty$, see, \eg \cite[4.6]{KW92}.
If $\theta$ is an \emph{involution} of the second kind then, up to conjugation by an inner automorphism, we may assume $\theta(\fkh)=\fkh$ and $\theta$ fixes pointwise any $\theta$-stable root space, see \cite[Ch.~5]{KW92}. 
The main example is the Chevalley involution $\om: \fkg \to \fkg$, defined by
\eq{
\om(e_i) = -f_i, \qq \om(f_i) = -e_i, \qq \om|_\fkh = -\id_\fkh.
}

A \emph{pseudo-involution} naturally generalizes such an involution in the sense that it is only assumed to be an involution on a stable Cartan subalgebra.
Relying on inner conjugacy again, we may define a pseudo-involution as an automorphism $\phi\colon \g\to\g$ of the second kind such that $\phi(\fkh)=\fkh$, $\phi|_{\fkh}$ is involutive and $\phi$ fixes pointwise any $\phi$-stable root space, cf.~\cite[Sec.~2]{RV22}.
In this case we have $\phi(\fkg_\al) = \fkg_{\phi^*(\al)}$ for all roots $\al$ and from now on we denote the dual map $\phi^*: \fkh^* \to \fkh^*$ also by $\phi$.

Given a pseudo-involution $\phi$ and a multiplicative character $\chi: \Qlat \to \C^\times$ such that $\chi(\al)=1$ whenever $\phi(\al)=\al$, we obtain another pseudo-involution 
$\Ad(\chi) \circ \phi$, 
where $\Ad(\chi)|_{\fkg_\al}$ is the multiplication by $\chi(\al)$.\\

Because of the semidirect product factorization of $\Aut_{\sf Lie}(\fkg)$ given in \cite[4.23]{KW92}, each pseudo-involution $\phi: \fkg \to \fkg$ has a canonical expression 
\eq{ \label{pseudoinv:factorization}
\phi = \Ad(\chi) \circ \Ad(\wt w_X)  \circ \tau \circ \om
}
where $\tau$ is a diagram automorphism\footnote{Recall that a \emph{diagram automorphism} (a bijection $\tau: \IS \to \IS$ such that $a_{\tau(i)\tau(j)} = a_{ij}$ for all $i,j \in \IS$) acts as a Lie algebra automorphism of $\fkg'$ by relabelling: $\tau(e_i) = e_{\tau(i)}$, $\tau(f_i) = f_{\tau(i)}$. 
By \cite[4.19]{KW92}, there exists a complementary subspace $\fkh''$ of $\fkh' = \fkh \cap \fkg'$ such that the action of $\tau$ can be extended to $\fkg = \fkg' \oplus \fkh''$.}, 
$\Ad(\wt w_X)$ is the triple exponential action induced by the longest element $w_X$ of a unique subdiagram $X \subseteq \IS$ of finite type and a character $\chi: \Qlat \to \C^\times$ such that $\chi(\al_i)=1$ if $i \in X$.
Note that $(X,\tau,\chi)$ are uniquely determined by $\phi$ from the expression \eqref{pseudoinv:factorization}. In particular, $X = \{ i \in \IS \, | \, \phi(\al_i) = \al_i \}$ (see \cite[Sec.~2]{RV22} and \cite[Sec.~6.1 \& 6.2]{AV22} for details). The case $X = \emptyset$ is referred to as \emph{quasi-split}. \\

In order for $\phi$ to lift to an algebra automorphism $\phi_q: \Uqg \to \Uqg$, it suffices that the extension of $\tau$ to $\fkh''$ described by \cite[4.19]{KW92} permutes the $\Z$-basis of $\Qvext$, constraining the choice of $d_s$ ($1 \le s \le \cork(A)$).
In the affine case, a suitable choice of $\Z$-basis can always be made, see \cite[Prop.~2.12]{Kol14}, although not in general if $\cork(A)=1$, see \cite[Sec.~3.15]{AV22}.

In \cite[Sec.~6.7]{AV22} a definition for the lift $\phi_q: \Uqg \to \Uqg$ is given, which can be related to the original constructions in \cite{Let99,Kol14}. 
Note that here we include into $\phi_q$ the adjoint action of a lift of $\chi$ to a multiplicative character $\chi_q: \Qlat \to \bsF^\times$ such that $\chi_q(\al_i)=1$ if $i \in X$.
In terms of the formalism of unrestricted specialization, see, \eg \cite{DCK90}, the automorphism $\phi_q$ specializes to the automorphism $\phi$.

\subsection{Pseudo-fixed-point subalgebras and their q-deformations} \label{sec:QSP}

It is useful to constrain $\phi$ by imposing additional conditions on $X(\phi)$, $\tau(\phi)$, $\chi(\phi)$, leading to the notion of an \emph{(enriched) generalized Satake diagram}, see \cite[Def.~3.4]{RV22} and \cite[Sec.~6.4]{AV22}. 
The resulting conditions on $\phi$ are equivalent to requiring that the associated \emph{pseudo-fixed-point subalgebra}
\eq{
\fkk \coloneqq  \langle \{ b_i \coloneqq  f_i + \phi(f_i)\}_{i \in \IS} , \{ e_i \}_{i \in X}, \fkh^\phi \rangle 
}
mimics the fixed-point subalgebra of an involution in the sense that $\fkk \cap \fkh = \fkh^\phi$.
The pair $(\fkg,\fkk)$ is called a \emph{pseudo-symmetric pair}.
Writing $\fkn^+_\phi \coloneqq  \fkn^+ \cap \phi(\fkn^-)$, we note the following key property, called \emph{Iwasawa decomposition}, see \cite[Sec.~3.4]{RV22}, 
\eq{ \label{k:Iwasawa}
\fkg = \fkk \oplus \fkh^{-\phi} \oplus \fkn^+_\phi.
} 

It is convenient to allow some extra degrees of freedom and define a modification $U(\fkk)_s$ of $U(\fkk)$ by adding a scalar $s_i \in \C$ to the generator $b_i \in U(\fkk)$ for $i \in \IS \backslash X$. 
In order to define the Letzter-Kolb deformation of $U(\fkk)_s$, we choose a pseudo-involution $\phi$ with a canonical expression \eqref{pseudoinv:factorization} in terms of an associated enriched generalized Satake diagram $(X,\tau,\chi)$.
The induced constraint on $\chi_q$ is conveniently rewritten in terms the tuple $\gamma=(\chi_q(\al_{\tau(i)}))_{i \in \IS}$.
Also, to deduce basic properties of the deformed algebra, we will need a constrained tuple $\sigma \in \bsF^{\IS}$ which specializes to the tuple $s$.
In order to do this, recall the subsets $\Gamma_q, \Sigma_q \subset \bsF^\IS$ defined in \cite[Sec. 6.8]{AV22} and select $\gamma \in \Gamma_q$ and $\sigma \in \Sigma_q$.\footnote{See also Section~\ref{prob:generalizedconstraints}.}

\begin{definition}[{\cite{Let99,Kol14,AV22}}] \label{def:QSP}
With $X$, $\tau$, $\gamma$, $\sigma$ as above, the \emph{QSP subalgebra} is the subalgebra $\QSP \subseteq \Uqg$ generated by the elements
\eq{
B_i \coloneqq  F_i + \phi_q(F_i) + \si_i \Kg{i}^{-1} \qq i \in \IS \backslash X,
}
and the following $\phi_q$-fixed Hopf subalgebras of $\Uqg$:
\eq{
U_q(\g_X) \coloneqq  \bsF\langle E_i,F_i, \Kg{i}^{\pm 1} \,\vert\, i \in X \rangle, \qq 
U_q(\fkh^\phi) \coloneqq  \bsF\langle \Kg{h} \,\vert\, h \in (\Qvext)^\phi \rangle.
}
The pair $(\Uqg,\QSP)$ is called a \emph{quantum (pseudo-)symmetric pair}, in short \emph{QSP}, and the entries of $\gamma$ and $\si$ are called \emph{QSP parameters}.
\end{definition}

\begin{remark}
As a standalone algebra, $\QSP$ is also called an \emph{$\imath$-quantum group}, see, \eg \cite{BW18b,BW21,LW21,WZh23,Wa24}.\rmkend
\end{remark}

The basic structure theory of $\QSP$ is established in \cite{Kol14}. The same proofs apply to the case of a pseudo-symmetric pair.

\begin{prop} \label{prop:QSP:properties}
\hfill
\begin{enumerate}\itemsep0.25cm
\item $\QSP$ is a right coideal subalgebra.
\item $\QSP$ is a maximal subspace of $\Uqg$ which specializes at $q=1$ to $U(\fkk)_s$.
\item $\QSP \cap \Uqh = U_q(\fkh^\phi)$.
\end{enumerate}
\end{prop}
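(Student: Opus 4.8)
The three assertions are the Kac--Moody (and pseudo-symmetric) analogues of the basic structure results of \cite{Let99,Kol14}, and the plan is to follow that template; the decisive structural input will be a quantum version of the Iwasawa decomposition \eqref{k:Iwasawa}. For (1), since $\Delta$ is an algebra map and $\QSP$ is generated by the elements in Definition~\ref{def:QSP}, it suffices to check $\Delta(x)\in\QSP\ot\Uqg$ on each generator. For $x$ in the Hopf subalgebras $\UqgX$ or $U_q(\h^\phi)$ this is immediate, since $\Delta(x)\in\UqgX\ot\UqgX$ respectively $U_q(\h^\phi)\ot U_q(\h^\phi)$. The only genuine computation is for $B_i=\Fg{i}+\phi_q(\Fg{i})+\si_i\Kg{i}^{-1}$ with $i\in\IS\setminus X$. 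Using $\Delta(\Fg{i})=\Fg{i}\ot\Kg{i}^{-1}+1\ot\Fg{i}$, the problem reduces to a formula for $\Delta(\phi_q(\Fg{i}))$, which I would extract from the factorization of $\phi_q$ in \cite[Sec.~6.7]{AV22a} (built from the Lusztig operator $T_{w_X}$, the diagram automorphism $\tau$, the quantum Chevalley involution, and $\Ad(\chi_q)$) by computing the coproduct of $T_{w_X}$ on the relevant root vector. The upshot, as in \cite{Kol14}, is
\[
\Delta(B_i)=B_i\ot\Kg{i}^{-1}+1\ot B_i+(\text{terms with left tensor leg in }\QSP),
\]
so that $\Delta(B_i)\in\QSP\ot\Uqg$; bookkeeping the $T_{w_X}$-coproduct is the one fiddly point.

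For (2), the specialization is the most direct. Fixing an $\bbA$-integral form ($\bbA$ a localization of $\bbC[q]$ at $q=1$), the generators specialize as $E_i\mapsto e_i$, $F_i\mapsto f_i$, $\Kg{i}^{\pm1}\mapsto1$, $\si_i\mapsto s_i$, whence $B_i\mapsto f_i+\phi(f_i)+s_i=b_i+s_i$, while $\UqgX$ and $U_q(\h^\phi)$ specialize to $U(\g_X)$ and $U(\h^\phi)$; together these generate $U(\k)_s$. Maximality rests on the quantum Iwasawa decomposition: I would exhibit an $\bsF$-subspace $\cM\subseteq\Uqg$, spanned by an explicit family of PBW monomials in root vectors attached to $\h^{-\phi}\oplus\n^+_\phi$, such that multiplication gives a linear isomorphism $\QSP\ot\cM\to\Uqg$ and $\cM$ specializes to the classical complement of $U(\k)_s$ afforded by \eqref{k:Iwasawa}. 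Any subspace strictly containing $\QSP$ would then meet $\cM$ nontrivially and hence specialize beyond $U(\k)_s$, contradicting the classical Iwasawa splitting.

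For (3), the inclusion $U_q(\h^\phi)\subseteq\QSP\cap\Uqh$ is built into Definition~\ref{def:QSP}. For the reverse inclusion I would refine Part (2) to a PBW basis of $\QSP$ itself --- ordered monomials in the $B_i$ ($i\in\IS\setminus X$), a PBW basis of $\UqgX$, and the $\Kg{h}$ with $h\in(\Qvext)^\phi$. Passing to the associated graded for a suitable filtration, in which each $B_i$ acquires a triangular leading term involving $\Fg{i}$, the leading terms of these basis monomials are linearly independent monomials in the triangular decomposition $\Uqg=\Uqnm\,\Uqh\,\Uqnp$, and the only ones lying in $\Uqh$ are the purely Cartan monomials $\Kg{h}$. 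Hence an $x\in\QSP\cap\Uqh$ can involve neither a $B_i$ nor a nontrivial $\UqgX$-factor, so $x\in U_q(\h^\phi)$.

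The main obstacle is the quantum Iwasawa decomposition underpinning both (2) and (3): proving that multiplication $\QSP\ot\cM\to\Uqg$ is an $\bsF$-linear isomorphism with a complement $\cM$ whose $q=1$ specialization matches the classical Iwasawa complement. This is where the real content lies --- it demands a careful PBW/filtration analysis and the non-degeneracy (linear independence) of the relevant leading terms --- whereas Part (1) is essentially a coproduct computation and the specialization half of Part (2) is routine once the integral form is in place.
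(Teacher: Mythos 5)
Your plan for (1) and for the specialization half of (2) matches the intended route: the paper gives no argument of its own here, deferring to \cite{Kol14} (``the same proofs apply''), and there the coideal property is exactly the coproduct computation on the $B_i$ you describe, while specialization uses an integral form as you set it up. The problem is in (3), and it propagates backwards into the PBW/Iwasawa input you want for the maximality half of (2): your argument for (3) never uses the parameter constraints $(\gamma,\sigma)\in\Gamma_q\times\Sigma_q$, yet statement (3) is \emph{false} for unconstrained parameters. The paper says this explicitly in Section~\ref{prob:generalizedconstraints}: the assumption $(\gamma,\sigma)\in\Gamma_q\times\Sigma_q$ ``is a natural condition in the proof of Proposition~\ref{prop:QSP:properties}~(3), see \cite[Sec.~5]{Kol14}'', and the condition $\QSP\cap\Uqh=U_q(\fkh^\phi)$ is expected to be \emph{equivalent} to membership in the larger set $\Gamma\Sigma_q$. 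A constraint-free filtration argument therefore proves too much.

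Concretely, the gap is in your claim that $\QSP$ has a PBW-type basis of ordered monomials in the $B_i$, a basis of $U_q(\g_X)$, and the $\Kg{h}$ with $h\in(\Qvext)^\phi$, with linearly independent leading terms. Each $B_i=\Fg{i}+\phi_q(\Fg{i})+\sigma_i\Kg{i}^{-1}$ is inhomogeneous, and products such as $B_iB_{\tau(i)}$ contain weight-zero cross terms (of the shape $\Fg{i}\Eg{i}\Kg{}^{-1}$) whose reordering produces Cartan contributions $\tfrac{\Kg{i}-\Kg{i}^{-1}}{q_i-q_i^{-1}}\cdot(\cdots)$. Linear combinations of monomials in the $B_i$ (e.g.\ quantum Serre combinations) can cancel in every positive filtration degree and leave a purely Cartan residue, which for bad parameters lies outside $U_q(\fkh^\phi)$ --- so for such parameters your asserted spanning set is not even a spanning set of $\QSP$ unless extra Cartan elements are adjoined. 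In other words, the statement ``ordered monomials with Cartan part only from $(\Qvext)^\phi$ span $\QSP$'' already presupposes $\QSP\cap\Uqh=U_q(\fkh^\phi)$; assuming it to prove (3) is circular. In \cite[Sec.~5]{Kol14} the logic runs in the opposite order: one first computes the $\Uqh$-components of the quantum Serre combinations of the $B_i$ explicitly, uses the defining conditions of $\Gamma_q$ and $\Sigma_q$ to show these components remain in $U_q(\fkh^\phi)$, deduces (3), and only afterwards establishes the quantum Iwasawa decomposition and monomial basis (\cite[Sec.~6]{Kol14}) that feed into the maximality statement in (2). So the order of your steps for (2) and (3) must be reversed, and the parameter-constraint computations --- which your proposal omits entirely --- are precisely where the real content of (3) lies.
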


We now explicitly describe a family of examples of QSPs built from a quasi-split symmetric pair. 

\begin{example} \label{example:pseudoquasisplit}
For any diagram involution $\tau$, the involution $\om \circ \tau$ is a pseudo-involution. 
Hence for any multiplicative character $\chi: \Qlat \to \C^\times$ one obtains the pseudo-involution $\phi = \Ad(\chi) \circ \om \circ \tau$ with $X = \emptyset$, explicitly given as
\begin{equation} \label{theta:example}
\phi(e_i) = -y_i^{-1} f_{\tau(i)}, \qq \phi(f_i) = -y_i e_{\tau(i)}, \qq \phi|_{\fkh} = -\tau|_{\fkh}
\end{equation}
where $y= (y_i)_{i \in I} \in (\C^\times)^{\IS}$ is such that $y_i = \chi(\al_{\tau(i)})$.
The corresponding pseudo-fixed-point subalgebra and its q-deformation are then given by
\begin{align}
\label{k:example}
\fkk &= \C\langle \{ f_i - y_i e_{\tau(i)} \}_{i \in \IS}, \, \{ h_i-h_{\tau(i)} \}_{i \in \IS, \, i \ne \tau(i)} \rangle,  \\[2pt]
\label{B:example}
\QSP &= \bsF\langle \{ B_i \}_{i \in \IS}, \, \{ \Kg{h_i-h_{\tau(i)}} \}_{i \in \IS, \, i \ne \tau(i)} \rangle,
\end{align}
where $B_i = F_i - \gamma_i q_i^{-\tfrac{a_{i \, \tau(i)}}{2}} E_{\tau(i)} \Kg{i}^{-1} + \sigma_i \Kg{i}^{-1}$.
Here $\sigma_i = 0$ if $\tau(i) \ne i$ or if there exists $j \in \IS$ such that $\tau(j)=j$ and $a_{ij}$ is odd, and each $\gamma_i \in \bsF^\times$ specializes to $y_i$.\hfill \rmkend
\end{example}

\subsection{Basic K-matrices} \label{sec:universalK:QSP}
Henceforth, we fix a QSP $(\Uqg,\QSP)$ and the associated algebra automorphism $\phi_q: \Uqg \to \Uqg$.
The QSP is naturally equipped with a cylindrical structure.
The following result deals with the intertwining identity \eqref{K:intw} and the coproduct identity \eqref{K:coproduct}, while the support property \eqref{K:support} is discussed in Section \ref{sec:tensorK:QSP}.
For $\la \in \Qp$, let $U_q(\fkn^+)_\la$  be the corresponding root space of the positive part $U_q(\fkn^+) \subset \Uqg$.

\begin{theorem}\label{thm:stdcylstr} \mbox{} 
\begin{enumerate}
\item
There is a unique $\Ups \in \End(\FF{}{})$ of the form
\eq{
\Ups = \sum_{\la \in \Qp} \Ups_\la, \qq \Ups_\la \in U_q(\fkn^+)_\la, \qq \Ups_0=1\,,
}
such that the linear relation \eqref{K:intw} holds with $K = \Ups$ and $\psi = \phi_q^{-1}$.
\item
The coproduct formula \eqref{K:coproduct} holds with $J = R_\phi$, where $R_\phi$ denotes the universal R-matrix of the Hopf subalgebra
 $U_q(\g_X) \cdot U_q(\fkh^\phi)$.\footnote{The element $R_\phi$ was introduced in \cite[Def.~6.7, Lem.~6.8]{AV22} as the R-matrix of $U_q(\g_X)$ with a Cartan correction. More precisely, $R_\phi=(T_\phi^{-1} \ot T_\phi^{-1}) \cdot \Delta(T_\phi)$, where $T_\phi$ is a modified quantum Weyl group operator for the longest element of $W_{X}$.}
\end{enumerate}
In particular, $(\phi_q^{-1}, R_\phi, \Ups)$ is a cylindrical structure on $(\Uqg,\QSP)$, called \emph{standard}.
\end{theorem}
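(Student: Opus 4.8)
The plan is to establish the three defining properties of a cylindrical structure separately, with the support condition \eqref{K:support} deferred to Section~\ref{sec:tensorK:QSP} as the statement indicates. First I would construct $\Ups$ and verify the intertwining relation \eqref{K:intw}. The key idea is that the condition $\Ad(\Ups)|_\QSP = \phi_q^{-1}|_\QSP$, restricted to the generators $B_i$, $E_i$ ($i\in X$), and $\Kg{h}$ ($h \in (\Qvext)^\phi$), becomes a system of recursive equations for the weight components $\Ups_\la \in U_q(\fkn^+)_\la$. Since each $B_i$ has a leading term $F_i$ and $\phi_q^{-1}(F_i)$ is controlled by the Satake data, rewriting $\Ad(\Ups)(B_i) = \phi_q^{-1}(B_i)$ as $\Ups \cdot B_i = \phi_q^{-1}(B_i) \cdot \Ups$ and comparing weight-by-weight yields a triangular recursion that determines $\Ups_\la$ uniquely from $\Ups_0 = 1$. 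The Cartan and $U_q(\g_X)$-parts are handled by observing that $\phi_q$ fixes $U_q(\g_X)\cdot U_q(\fkh^\phi)$ pointwise, so the relation reduces to $\Ups$ commuting with these subalgebras up to the $\phi_q^{-1}$-twist, which constrains but is compatible with the recursion. This is essentially the quasi K-matrix existence theorem of \cite{BW18a,BK19,AV22a}, so I would cite those constructions for the detailed solvability of the recursion rather than reprove it.

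Next I would address the twist pair $(\phi_q^{-1}, R_\phi)$. Here I must check that $(\phi_q^{-1}, R_\phi)$ is genuinely a twist pair, i.e. that $(\Uqg^{\sf cop})^{\phi_q^{-1}} = \Uqg_{R_\phi}$, which amounts to the two identities $(\phi_q^{-1}\ot\phi_q^{-1})\circ\Deltaop\circ\phi_q = \Ad(R_\phi)\circ\Delta$ and $(R_\phi^{\phi_q^{-1}\phi_q^{-1}})_{21} = (R_\phi)_{21}\cdot R \cdot R_\phi^{-1}$. The natural route is to use the footnoted description $R_\phi = (T_\phi^{-1}\ot T_\phi^{-1})\cdot\Delta(T_\phi)$, where $T_\phi$ is the modified quantum Weyl group operator for $w_X$. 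Since $\phi_q$ is built from $\Ad(\wt w_X)\circ\tau\circ\om$ composed with a Cartan/character correction, and the quantum Weyl group operators implement exactly the braided behaviour of these ingredients, the coproduct-twisting identity follows from the known transformation of $\Delta$ under $T_\phi$ together with the coalgebra (anti)automorphism property of $\om$ and $\tau$. I would reduce this to the properties of $R_\phi$ and $\phi_q$ already assembled in \cite[Sec.~6]{AV22a}.

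Then I would verify the coproduct formula \eqref{K:coproduct}, namely $\Delta(\Ups) = R_\phi^{-1}\cdot\Ups_2\cdot R^{\phi_q^{-1}}\cdot\Ups_1$. The strategy is the standard intertwiner argument: both sides are elements of a completion of $\Uqg^{\ot 2}$, and one shows they intertwine the same pair of coproduct-twisted actions of $\QSP$, then pins down equality by comparing the leading (weight-zero in the first two legs) components, where $\Ups_0 = 1$ forces agreement. Concretely, one checks that the right-hand side satisfies the same intertwining relation with $\Delta(\QSP)$ that characterizes $\Delta(\Ups)$, using that $R$ intertwines $\Delta$ and $\Deltaop$ and that $R_\phi$ implements the $U_q(\g_X)U_q(\fkh^\phi)$-part of the twist; uniqueness of the solution with prescribed leading term then gives the formula.

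I expect the main obstacle to be the bookkeeping in the coproduct identity \eqref{K:coproduct}, specifically tracking the interplay between the Cartan correction built into $R_\phi$ and the twist $R^{\phi_q^{-1}}$. The $\phi_q$-twisting acts nontrivially on the first tensor leg of $R$, and matching this against the Weyl-group-operator expression for $R_\phi$ requires careful control of how $T_\phi$ conjugates the quasi R-matrix $\Xi$ and the diagonal factor $\kappa$. Establishing the intertwining identity \eqref{K:intw} and the twist-pair conditions is comparatively mechanical once the recursion and the $T_\phi$-description are in hand, but the coproduct formula genuinely couples all the structural data together, so I would treat it as the technical heart and lean on the explicit computations in \cite{AV22a} to control it. Throughout, I would emphasize that the support condition \eqref{K:support} is not part of this argument and is handled separately.
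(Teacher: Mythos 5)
Your proposal follows essentially the same route as the paper, which itself presents Theorem~\ref{thm:stdcylstr} as a citation-backed result: the weight-recursive construction of $\Ups$ from the intertwining condition \eqref{K:intw} (the coideal generalization of Lusztig's skew-derivation argument), the verification of the twist pair $(\phi_q^{-1},R_\phi)$ via the quantum Weyl group operator $T_\phi$, the coproduct identity \eqref{K:coproduct} pinned down by intertwining together with the normalization $\Ups_0=1$, and the support condition \eqref{K:support} correctly set aside for Section~\ref{sec:tensorK:QSP}. The one caveat is that your ``triangular recursion'' is actually over-determined rather than triangular --- each $\Ups_\la$ must simultaneously satisfy the conditions imposed by all $i\in\IS$, and this consistency is precisely the fundamental lemma of quantum symmetric pairs \cite[Conj.~2.7]{BK15}, established in \cite{BW21} --- but since you defer solvability to the cited constructions exactly as the paper does, this is a matter of framing rather than a gap.
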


As stated above, the theorem was proved in \cite{AV22}, building on the following earlier results.
H. Bao and W. Wang established (1) in \cite[Ch. 2]{BW18a} for the family of quasi-split quantum symmetric pairs with $\fkg = \fksl_N$ and $\tau$ nontrivial.
M. Balagovi\'{c} and S. Kolb generalized (1) to all quantum symmetric pairs in \cite[Sec.~6]{BK19}.\footnote{
In \cite{BW18a, BK19}, the QSP parameters were constrained by requiring an analogue of the bar involution on $\QSP$, which was directly involved in the defining equation  $\Ups$. In our approach, \eqref{K:intw} takes the form $\Ups \cdot b = \phi_q^{-1}(b) \cdot \Ups$ for any $b\in\Uqk$. Then, a linear relation generalizing the one from \cite{BW18a, BK19} appears by observing that $\phi_q^{-1}$ is trivial on $U_q(\g_X) \cdot U_q(\fkh^\phi)$ and acts on the generators $B_i$ as a composition of the usual bar involution on $\Uqg$ and an involution of the QSP parameters, see \cite[Prop.~8.3]{AV22}. Based on this observation, the existence of a QSP bar involution was later obtained in \cite{Kol22} from the existence of $\Ups$, as defined by \eqref{K:intw}, and then imposing the constraints.
}
The proof of (2) appeared in \cite[Sec. 9]{BK19} for finite types. The general case was proved in \cite{AV22} and relied on a crucial support property of $R \cdot R_\phi^{-1}$, see \cite[Prop.~4.3]{AV22}. 

Finally, in \cite{RV20} it was pointed out that the approach of \cite{BW18a,BK19} directly extends to quantum \emph{pseudo}-symmetric pairs of finite type and the results of \cite{AV22} are proved in this generality.\\

The proof is a generalization of Lusztig's approach to the universal R-matrix, see \cite{Lus94}. 
The key tool is given by skew derivations, certain linear maps on $U_q(\fkn^+)$ which interact nicely with the bialgebra structure of $\Uqg$, as well as the pairing between $U_q(\fkn^+)$ and $U_q(\fkn^-)$, yielding a recursive proof of the existence of each $\Ups_\la$.
An important step, the so-called \emph{fundamental lemma of quantum symmetric pairs}, first formulated in \cite[Conj.~2.7]{BK15}, was uniformly established in \cite{BW21} in the Kac-Moody setting.\\

In \cite{BK19}, gauge transformations are implicitly used to obtain a cylindrical structure for any QSP with $\dim(\fkg)<\infty$ and $\psi$ equal to a diagram automorphism.
In this case, the resulting universal K-matrix acts on $\Mod_{\sf fd}(\Uqg)$, yielding matrix solutions of a constant reflection equation. 
By gauge transformations in terms of an invertible $g \in \End(\FF{}{})$ such that $\Ad(g)$ preserves $\Uqg$, we obtain a family of related cylindrical structures.
A distinguished example is given by the \emph{semistandard} twist pair, which consists of the coalgebra anti-automorphism $\omega \circ \tau$ and the trivial Drinfeld twist $1 \ot 1$. 
It can be obtained by gauge-transforming by quantum Weyl group operators, see \cite[Sec.~8.10]{AV22}. Note that it was essentially already described in \cite[Sec.~7.2]{BK19}, up to conventions.

\subsection{Generalized parameters} \label{prob:generalizedconstraints}
The assumption $(\gamma,\sigma) \in \Gamma_q \times \Sigma_q$ is a natural condition in the proof of Proposition \ref{prop:QSP:properties} (3), see \cite[Sec.~5]{Kol14}, but it appears to be amenable to a generalization.
In \cite{BB10}, P. Baseilhac and S. Belliard introduced a family of $\Uqg$-comodule deformations of $U(\g^\om)_\si$, for $\g$ of affine type, called \emph{generalized q-Onsager algebras}.
These algebras can be embedded as coideal subalgebras in $\Uqg$. They are in fact examples of QSP algebras $\QSP$ as defined in Definition \ref{def:QSP}, but with parameters $(\gamma,\sigma)$ chosen from the set
\begin{align*}
\Gamma\Sigma_q &= \big\{ (\ga,\si) \in (\bsF^\times)^{I} \times \bsF^{I} \, \big| \, 
\forall i \in I \; (\al_i,\theta(\al_i)) = 0 \; \Rightarrow \; \ga_i = \ga_{\tau(i)} \qu \text{and} \\
& \hspace{40mm} \forall i \in I\backslash I_{\sf ns}\; \si_i = 0 \qu \text{and} \qu \forall i,j \in I_{\sf ns} \; P_{ij} \text{ holds} \}
\end{align*}
where $I_{\sf ns}=\{ i \in \IS \, | \, \theta(\al_i) = -\al_i \}$ and $P_{ij}$ stands for the condition\footnote{Omitting the third alternative from $P_{ij}$, we recover $\Gamma_q \times \Sigma_q$.}
\[
a_{ij} \in 2\Z \qq \text{or} \qq \si_j = 0 \qq \text{or} \qq q_i^{r-1} \ga_i = \Big( \tfrac{1+q_i^r}{1-q_i^2} \si_i \Big)^2 \text{ with } 0<r \le -a_{ij} \text{ odd}.
 \]
For example, if $\fkg = \fksl_N$ and $\phi = \omega$ then we can consider the coideal subalgebra generated by the elements
\begin{equation}
F_i - q^{-1} \ga_i E_i \Kg{i}^{-1} \pm (1-q_i) \ga_i^{1/2} \Kg{i}^{-1}. 
\end{equation}

The following conjecture is a refinement of one in \cite{RV20} and is motivated by the apparent equivalence (in many examples) of the conditions $\QSP \cap \Uqh = U_q(\fkh^\phi)$ and $(\ga,\si) \in \Gamma\Sigma_q$, and by the study of matrix solutions of reflection equations in finite-dimensional $U_q(\fkg)$-modules.

\begin{conjecture}
For every $(\ga,\si) \in \Gamma\Sigma_q$, the pair $(\Uqg,\QSP)$ admits a cylindrical structure.
Moreover, up to gauge transformation, every invertible universal solution of \eqref{K:RE} for $\Uqg$, with $\dim(\g) < \infty$, arises this way.
\end{conjecture}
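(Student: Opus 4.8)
Since this is stated as a conjecture, I would aim only at a strategy and split it into its two assertions.

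For the existence statement, the natural candidate is the triple $(\phi_q^{-1}, R_\phi, \Ups)$ furnished by Theorem~\ref{thm:stdcylstr}, which already yields a cylindrical structure for parameters in $\Gamma_q \times \Sigma_q$; the task is to enlarge the admissible set to $\Gamma\Sigma_q$, i.e.\ to allow the third alternative of the condition $P_{ij}$. First I would re-examine the recursive construction of the quasi-K-matrix $\Ups = \sum_{\la} \Ups_\la$, the coideal analogue of Lusztig's construction of the quasi-R-matrix via skew derivations. The intertwining axiom \eqref{K:intw}, namely $\Ad(\Ups)|_{\QSP} = \phi_q^{-1}|_{\QSP}$, unwinds—upon testing against each generator $B_i$—into a recursion whose solvability at each step is governed by an obstruction polynomial in the parameters $\ga_i,\si_i$. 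The point to establish is that this obstruction vanishes precisely under $P_{ij}$; the quadratic relation $q_i^{r-1}\ga_i = \big(\tfrac{1+q_i^r}{1-q_i^2}\si_i\big)^2$ appearing in the third alternative should be exactly the identity needed to cancel the residual term that arises when $\si_i,\si_j \ne 0$ and $a_{ij}$ is odd. Once $\Ups$ exists, the coproduct axiom \eqref{K:coproduct} with $J = R_\phi$ and the support axiom \eqref{K:support} should follow as in the standard case, the latter via the maximality criterion $\QSP = B_\xi$ of Remark~\ref{rmk:gauge}(2) with $\xi = \Ad(\Ups^{-1})\circ\phi_q^{-1}$; here I would exploit the expected equivalence, which motivates the conjecture, between $\QSP \cap \Uqh = U_q(\fkh^\phi)$ (Proposition~\ref{prop:QSP:properties}(3)) and membership in $\Gamma\Sigma_q$.

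For the exhaustiveness statement, with $\dim\g < \infty$, I would start from an arbitrary invertible universal solution $K$ of the reflection equation \eqref{K:RE} for a given twist pair $(\psi,J)$, set $\xi = \Ad(K^{-1}) \circ \psi$, and consider the maximal $\xi$-fixed right coideal subalgebra $B_\xi$ of \eqref{xi:def}. In finite type one can then invoke the Letzter--Kolb classification of right coideal subalgebras of $\Uqg$ to identify $B_\xi$ with a QSP subalgebra $\QSP$ attached to a generalized Satake datum, and, since $\Mod_{\sf fd}(\Uqg)$ separates points in $\Uqg$, use the matrix form of $K$ on finite-dimensional modules to pin down the associated parameters and to show that $K$ agrees, up to gauge transformation, with the quasi-K-matrix $\Ups$ of that QSP. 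Reading off the resulting parameter constraints should then force $(\ga,\si) \in \Gamma\Sigma_q$.

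I expect the genuine difficulties to be the following. For existence, the crux is the recursion: the standard proofs (the ``fundamental lemma'' of \cite{BW21}) use the stronger constraints of $\Gamma_q \times \Sigma_q$ to annihilate certain terms outright, whereas under the weaker hypotheses of $\Gamma\Sigma_q$ one faces a genuinely quadratic obstruction whose vanishing must be matched term by term to the three alternatives of $P_{ij}$; controlling this for all $i,j$ simultaneously, together with the Kac-Moody subtleties in the support argument, is the delicate part. For exhaustiveness, the hard input is the combination of a complete classification of the relevant coideal subalgebras with a rigidity statement ensuring that a solution of \eqref{K:RE} is determined up to gauge by its associated coideal $B_\xi$; establishing such rigidity, rather than merely exhibiting one solution per datum, is where I expect the main obstacle to lie.
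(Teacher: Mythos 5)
The statement you were asked to prove is an open conjecture: the paper offers no proof, only motivation (the ``apparent equivalence (in many examples)'' between $\QSP \cap \Uqh = U_q(\fkh^\phi)$ and $(\ga,\si)\in\Gamma\Sigma_q$, and the study of matrix solutions of reflection equations on finite-dimensional modules, \eg for the generalized q-Onsager algebras of \cite{BB10}). So there is nothing in the paper to compare your argument against, and your submission is — appropriately — a strategy rather than a proof. As a strategy it is well aligned with what the authors themselves gesture at: take the standard triple $(\phi_q^{-1},R_\phi,\Ups)$ of Theorem~\ref{thm:stdcylstr} as the candidate, push the recursive construction of $\Ups$ through the weaker constraint set $\Gamma\Sigma_q$, and for exhaustiveness pass from a solution $K$ of \eqref{K:RE} to the maximal right coideal subalgebra $B_\xi$ of \eqref{xi:def}.

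That said, be aware that each pillar of your outline rests on inputs that are themselves unavailable, not merely unverified. For existence, the recursion for $\Ups$ is not self-contained: it relies on the fundamental lemma of \cite{BW21} and, for the support axiom via $B = B_\xi$, on Proposition~\ref{prop:QSP:properties}(2)--(3); the paper explicitly notes that $(\ga,\si)\in\Gamma_q\times\Sigma_q$ is a natural hypothesis \emph{in the proof} of part (3), so for $\Gamma\Sigma_q$ the structural facts you want to invoke ($\QSP\cap\Uqh=U_q(\fkh^\phi)$, specialization, maximality) are exactly part of what is conjectural — your plan to ``exploit the expected equivalence'' that motivates the conjecture inside its proof is circular as stated, and the obstruction-matching against the three alternatives of $P_{ij}$ is asserted rather than performed. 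For exhaustiveness, there is no classification of right coideal subalgebras of $\Uqg$ at the generality you need (existing results cover restricted classes, and $B_\xi$ need not a priori be of QSP type), and the rigidity statement — that an invertible solution of \eqref{K:RE} is determined up to gauge transformation by its twist pair and $B_\xi$ — is precisely the open content, not a citable lemma. In short: a reasonable research program, honestly flagged as such, but every one of its three main steps (obstruction analysis under $P_{ij}$, structure theory of $\QSP$ for generalized parameters, classification plus rigidity) is an open problem rather than a gap one could close by routine work.
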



\subsection{Explicit formulae for $\Upsilon$}\label{ss:factorization}
The construction of the element $\Ups$ in Theorem \ref{thm:stdcylstr} is not explicit.
If $\fkg = \fksl_2$ and $\phi=\omega$, so that $\QSP \subset U_q(\fksl_2)$ deforms $U(\fkso_2) \subset U(\fksl_2)$, one can find $\Ups$ by ad-hoc computations, which first appeared in \cite{KSS93} and \cite{tDHO98}. More generally, explicit computations have been carried out in \cite{DK19} for many 
cases of restricted rank one\footnote{We refer to the dimension of $(\fkh')^{-\phi}$ as the \emph{restricted rank} of $\phi$.} in finite types with $\phi^2=\id$.\\

An important consequence of the proof of Theorem \ref{thm:stdcylstr} is that $\Ups_\la = 0$ if $\phi(\la) \ne -\la$.
In other words, $\Ups$ is supported on the nonnegative part of the restricted root lattice. 
The latter is given by
\[
\Sp_\Z \{ \al_i|_{\fkh^{-\phi}} \}_{i \in \IS} = \Sp_\Z \{ \tfrac{1}{2}(\al_i - \phi(\al_i)) \}_{i \in \IS}
\]
Consider the \emph{restricted Weyl group} $\wt W$, i.e. the Weyl group of the restricted root lattice.
It has a presentation as a Coxeter group, see, \eg \cite[Sec.~4]{RV22} and references therein.
In \cite{DK19}, $\Ups$ was obtained as an ordered product of explicit K-matrices of restricted rank one. 
Combined with a result from \cite{WZh23}, this yields a factorization of $\Upsilon$ for all QSPs of finite type, which corresponds to a reduced expression of the longest element of $\wt W$, in analogy with the factorization of the quasi-R-matrix in finite types, see \cite{KR90,LS90}.\\

In contrast, there is no explicit expression of universal K-matrices for quantum affine algebras.

\begin{conjecture}\label{conj:K-factorization}
	Given a quantum symmetric pair of affine type, $\Ups$ admits a \emph{loop-triangular} factorization analogous to that of the (quasi-)R-matrix of quantum affine algebras, see \cite{KhT92, LSS93, Be94, Da98}. 
\end{conjecture}

For affine symmetric pairs of restricted rank one, the K-matrix is {\em abelian}, \ie supported only on imaginary root vectors. Hence, the conjecture is obviously true in this case. Note that, the resticted root system always contains all imaginary roots. Thus, the abelian factor is always expected to be non-trivial. We also expect that each factor is expressed by an infinite product of restricted rank one K-matrices, indexed by a convex order on the positive restricted root system.

\subsection{QSP weight modules}\label{ss:QSPweight}
The tensor K-matrix associated to $\Ups$ does not act on any tensor product of a $\QSP$-module and a $\Uqg$-module.
We need to specify a suitable category of weight $\QSP$-modules, 
see \cite{AV25b} and cf.~\cite{BW18b,Wa24}.
Set
\eq{
\Plat_\phi = \Plat / \Plat^{-\phi}
}
with the canonical projection $[\cdot]_\phi : \Plat \twoheadrightarrow \Plat_\phi$.
There exists a pairing $\Plat_\phi \times \Qvext \to \tfrac{1}{2}\Z$ defined by $\zeta(h) = \la(h + \phi(h))/2$ if $\zeta = [\lambda]_\phi \in \Plat_\phi$. 
The restriction to $\Plat_\phi \times (\Qvext)^\phi$ is nondegenerate and takes values in $\Z$.
Note that $U_q(\fkh^\phi) = \bsF \langle \{ \Kg{h} \}_{h \in (\Qvext)^\phi} \rangle$.\\

We call a $\QSP$-module $M$ a \emph{(type ${\bf 1}$) QSP weight module} if
\eq{
M = \bigoplus_{\zeta \in \Plat_\phi} M_\zeta, \qq M_\zeta = \{ m \in M \, | \, \forall h \in (\Qvext)^\phi \, \Kg{h} \cdot m = q^{\zeta(h)} m \}.
}
We denote the full subcategory of QSP weight modules by $\cW_\phi \subseteq \Mod(\QSP)$ and the corresponding completion of $\QSP$ by $\End(\FF{\phi}{}) = \End(\FF{\cW_\phi}{})$.\footnote{Note that $\cW_\phi = \Mod(\QSP)$ if and only if $\phi|_{\fkh} = \omega|_{\fkh}$.}
The category $\cW_\phi$ has many desirable properties, see \cite[Prop.~5.2.3]{AV25b}: 
\begin{enumerate}
\item up to twisting by algebra automorphisms of $\QSP$, all finite-dimensional irreducible $\QSP$-modules are objects in $\cW_\phi$;
\item by restriction to $\QSP$, every object in $\cW$ belongs to $\cW_\phi$;
\item $\cW_\phi$ is a right module category over the monoidal category $\cW$ (and hence over the subcategories $\cO$ and $\cO^{\sf int}$);
\item $\cW_\phi$ separates points in $\QSP$.
\end{enumerate}

\subsection{Tensor K-matrices for quantum symmetric pairs} \label{sec:tensorK:QSP}
Let $\TKM{}$ be the tensor K-matrix corresponding to $K=\Ups$, \ie $\TKM{} \coloneqq (R^\psi)_{21} \cdot \Ups_2 \cdot R$.

\begin{theorem}[\cite{AV25b}] \label{thm:support}
The tensor K-matrix $\TKM{}$ lies in $\End(\FF{\phi}{} \boxtimes \FF{}{})$.
\end{theorem}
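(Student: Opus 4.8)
The plan is to verify the support condition \eqref{K:support} for the basic K-matrix $K=\Ups$ of Theorem~\ref{thm:stdcylstr} in the representation-theoretic completion, i.e.\ to show that the first tensor leg of
\[
\TKM{}=(R^{\phi_q^{-1}})_{21}\cdot\Ups_2\cdot R
\]
lies in the completion of $\QSP$, so that $\TKM{}$ is defined as an operator on $M\ten V$ for all $M\in\cW_\phi$ and $V\in\Oint$. Since the gauge transformations of Section~\ref{ss:gauge} act only on the second leg, $\gTKM{}=(1\ten g)\cdot\TKM{}$, the first-leg support is a gauge invariant; one is therefore free to work in whichever gauge is most convenient (e.g.\ the semistandard one, where $\psi$ is a coalgebra anti-automorphism and $J=1\ten 1$), while I will keep the formulas in the standard gauge with $\psi=\phi_q^{-1}$ and $J=R_\phi$.

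The key algebraic step is to set $\xi\coloneqq\Ad(\Ups^{-1})\circ\phi_q^{-1}$ and rewrite $\TKM{}$. From $(R^{\xi})_{21}=\Ups_2^{-1}\cdot(R^{\phi_q^{-1}})_{21}\cdot\Ups_2$ one obtains the identity
\begin{equation}
\TKM{}=\Ups_2\cdot(R^{\xi})_{21}\cdot R,
\end{equation}
in which $\Ups_2$ acts only on the second leg. By the intertwining relation \eqref{K:intw} we have $\xi|_{\QSP}=\id$, hence $\QSP\subseteq B_\xi$ in the notation of Remark~\ref{rmk:gauge}(2), and that same remark guarantees $(R^{\xi})_{21}\cdot R\in B_\xi\ten\Uqg$. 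Since left multiplication by $\Ups_2$ leaves the first leg untouched, we conclude $\TKM{}\in B_\xi\ten\Uqg$. The theorem is thereby reduced to the identification $B_\xi=\QSP$, i.e.\ the maximality \eqref{B:maximal} of the QSP subalgebra among right coideal subalgebras fixed by $\xi$; this is exactly what upgrades the first leg from $\Uqg$ to $\QSP$ and makes $\TKM{}$ act on $\cW_\phi\boxtimes\Oint$.

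To prove the remaining inclusion $B_\xi\subseteq\QSP$ I would run a weight argument. An element $a\in B_\xi$ satisfies $\xi(a)=a$ and $(\xi\ten\id)(\Delta(a))=\Delta(a)$; expanding $\xi=\Ad(\Ups^{-1})\circ\phi_q^{-1}$ against the factorization $R=\kappa\cdot\Xi$, with $\Xi=\sum_{\la\in\Qp}\Xi_\la$ and $\Xi_\la\in U_q(\fkn^-)_{-\la}\ten U_q(\fkn^+)_\la$, and using that $\Ups$ is supported on the nonnegative restricted root lattice, one matches the leading terms of $a$ against the generators $\Bg{i}=\Fg{i}+\phi_q(\Fg{i})+\si_i\Kg{i}^{-1}$ of Definition~\ref{def:QSP} together with the Iwasawa decomposition \eqref{k:Iwasawa}, forcing $a\in\QSP$. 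In finite type this identification is precisely the support property of $R\cdot R_\phi^{-1}$ established in \cite[Prop.~4.3]{AV22a}.

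The main obstacle is the passage to Kac-Moody type. In finite type every module is finite-dimensional and all the sums above are finite, so $B_\xi=\QSP$ is a finite-dimensional weight computation; in general $R$, $\Xi$ and $\Ups$ are genuinely infinite sums living in the completions of Section~\ref{ss:completions}. The hard part is to show that the first-leg components of $(R^{\xi})_{21}\cdot R$, organized by weight and root height, act locally finitely on every $M\ten V$ with $M\in\cW_\phi$ and $V\in\Oint$ and assemble into an element of the completion of $\QSP$. Here the category $\cO$ condition (local finiteness of the $\Uqnp$-action) controls the convergence in the second leg, while the fact that $\cW_\phi$ separates points in $\QSP$ allows one to recognize the limiting operator as lying in $\QSP$; together these carry the finite-type support argument over to the Kac-Moody setting.
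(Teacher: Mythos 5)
Your reduction coincides with the paper's own: the rewriting $\TKM{}=\Ups_2\cdot(R^{\xi})_{21}\cdot R$ with $\xi=\Ad(\Ups^{-1})\circ\phi_q^{-1}$, the inclusion $\QSP\subseteq B_\xi$ forced by \eqref{K:intw}, and the implication from the maximality \eqref{B:maximal} to the support condition \eqref{K:support} are exactly Remark~\ref{rmk:gauge}\,(2), and the paper likewise reduces Theorem~\ref{thm:support} to the identification $B_\xi=\QSP$, proved as \cite[Thm.~5.6.1]{AV24}. The genuine gap is that your proof of the remaining inclusion $B_\xi\subseteq\QSP$ --- which is the entire content of the theorem --- is only asserted. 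A ``leading-term matching'' of $a\in B_\xi$ against the generators $B_i$ is precisely the step that fails to be routine: $B_\xi$ is cut out by a fixed-point condition on coproducts involving $\Ups$, which is known only through a recursive existence theorem (Theorem~\ref{thm:stdcylstr}), and in Kac--Moody type $\kappa$, $\Xi$ and $\Ups$ live in completions, so there is no evident filtration in which such an induction closes. The paper's actual mechanism is different and essential: by Proposition~\ref{prop:QSP:properties}\,(2), $\QSP$ is a \emph{maximal} subspace of $\Uqg$ specializing at $q=1$ to $U(\fkk)_s$; one then studies the classical limit of $\Ups$ (hence of $\xi$) to show that the specialization of $B_\xi$ is again a coideal of fixed-point type, and the Iwasawa decomposition \eqref{k:Iwasawa} bounds it by $U(\fkk)_s$, whence $B_\xi=\QSP$ by maximality. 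Your sketch invokes Iwasawa but never the specialization argument, and offers no substitute for it.

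Two secondary misstatements compound this. First, the finite-type case of the identification is not ``precisely the support property of $R\cdot R_\phi^{-1}$'' from \cite[Prop.~4.3]{AV22a}: that proposition is an ingredient in the coproduct formula \eqref{K:coproduct} for $\Ups$, whereas the finite-type support property of $\TKM{}$ is due to \cite{Kol20}, by a different argument; nor is the finite-type statement a ``finite-dimensional weight computation'', since $\Uqg$ and $B_\xi$ are infinite-dimensional there as well. Second, your closing paragraph misplaces the analytic burden: once $B_\xi=\QSP$ is established at the level of weight-graded components, the first leg of $\Ups_2\cdot(R^{\xi})_{21}\cdot R$ lies in $\QSP$ componentwise, and the sum acts on $M\ot V$ because the category-$\cO$ condition on $V$ truncates it through the second leg; the separation of points by $\cW_\phi$ (Section~\ref{ss:QSPweight}\,(4)) only guarantees that $\QSP$ embeds faithfully into $\End(\FF{\phi}{})$ --- it is not a device for ``recognizing the limiting operator as lying in $\QSP$'', and it cannot replace the missing maximality argument.
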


The proof relies on the maximality property \eqref{B:maximal}, see \cite[Thm.~5.6.1]{AV25b}, which holds as a consequence of Proposition \ref{prop:QSP:properties} (2), a study of the classical limit of $\Ups$, and the Iwasawa decomposition \eqref{k:Iwasawa}.
For QSPs of finite type and finite-dimensional $\QSP$-modules, the support property is proved in \cite{Kol20}.

\begin{remark}
In contrast with Section~\ref{ss:factorization}, tensor K-matrices of finite types are not known to 
decompose according to the restricted root system. 
For QSPs of split and quasi-split affine types, new \emph{loop} generators have been introduced in \cite{LW21, Zh22, LWZ23, LWZ24}. 
No loop-triangular decomposition is known for their tensor K-matrices. 
However, for $U_q(\wh\fksl_N)$, strong evidence is given by the K-matrix presentations of the QSPs in \cite{MRS03, CGM14}.
On a related point, the authors of \cite{LBG25a} provide a twisted LDU decomposition of an algebra-valued matrix which should correspond to $\mathbb{K}_{\bullet,V}(z)$ in the current work, in the case where $\Uqk \subset U_q(\wh\fksl_2)$ is the q-Onsager algebra and $V$ is the two-dimensional irreducible $U_q(\wh\fksl_2)$-module.
\rmkend
\end{remark}

\subsection{The boundary transfer matrix for the dual K-matrix} \label{sec:boundarytransfermatrices:qgpfinite}
We now apply Theorem~\ref{thm:t:Grothendieck} and Proposition~\ref{prop:wtKbg} to the standard cylindrical structure of $(\Uqg, \QSP)$. 
As in Section \ref{ss:dualK}, set $\wt K=D\cdot K^{-1}$, where $D$ is defined by \eqref{D:def}, and consider 
\eq{
\gTKM{} = (1 \ot \wt K) \cdot \TKM{} \in \End(\FF{\phi}{} \boxtimes \FF{}{}).
} 
\begin{prop} \label{prop:t:Grothendieck:finitetype}
Let $(\Uqg,\QSP)$ be a QSP of finite type.
There is a ring homomorphism $\TM_\phi: \big[ \Mod_{\sf fd}(\Uqg) \big] \to \QSP$
given by $\TM_\phi^{(V)}\coloneqq \Tr_2 \big( \gTKM{\bullet,V} \big)$ where 
$V \in \Mod_{\sf fd}(\Uqg)$.
\end{prop}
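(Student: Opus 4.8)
The plan is to deduce Proposition~\ref{prop:t:Grothendieck:finitetype} as the specialization of the general construction of Section~\ref{sec:cylstrbalHopf} to the quantum symmetric pair $A = \Uqg$, $B = \QSP$, so that essentially all of the content is already contained in Theorem~\ref{thm:t:Grothendieck} and Proposition~\ref{prop:wtKbg}. The only substantive work is to certify that the hypotheses hold and to pass from the honest-bialgebra setting of Section~\ref{sec:cylstrbalHopf}, where $R$, $\Ups$ and $D$ would be algebra elements, to the representation-theoretic completions in which these operators actually live for a quantum group.

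First I would record the input data. By Theorem~\ref{thm:stdcylstr}, the triple $(\phi_q^{-1}, R_\phi, \Ups)$ satisfies the intertwining axiom \eqref{K:intw} and the coproduct axiom \eqref{K:coproduct}, and by Theorem~\ref{thm:support} the associated tensor K-matrix $\TKM{}$ satisfies the support axiom \eqref{K:support}, lying in $\End(\FF{\phi}{} \boxtimes \FF{}{})$; hence $(\phi_q^{-1}, R_\phi, \Ups)$ is a cylindrical structure on $(\Uqg, \QSP)$. Next I would invoke the sovereign element $D$ of Section~\ref{sec:quantumgroups}: by \eqref{D:def} it is group-like with $\Ad(D) = S^2$, and $b = D u^{-1}$ is central, so that $\Uqg$ is balanced in the sense of Section~\ref{ss:balance-sovereign}. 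Proposition~\ref{prop:wtKbg} then applies verbatim and shows that $\wt K \coloneqq D \cdot \Ups^{-1}$ is a boundary gauge transformation for the twist pair $(\phi_q^{-1}, R_\phi)$, with gauge-transformed tensor K-matrix exactly $\gTKM{} = (1 \ot \wt K) \cdot \TKM{}$.

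With a boundary gauge transformation produced, Theorem~\ref{thm:t:Grothendieck} immediately yields the assignment $[V] \mapsto \Tr_2(\gTKM{\bullet, V})$ as a ring homomorphism, delivering the multiplicativity $\TM_\phi^{(V \ot W)} = \TM_\phi^{(V)} \cdot \TM_\phi^{(W)}$, additivity on short exact sequences, and the vanishing commutators. Here the finite-type hypothesis $\dim(\g) < \infty$ enters in two ways: it ensures that $\Mod_{\sf fd}(\Uqg)$ is a nontrivial full subcategory of $\Oint$, and it forces the partial trace $\Tr_2$ to be finite, since for finite-dimensional $V$ only finitely many quasi-R-matrix components $\Xi_\la$ act nontrivially in the leg $\bullet \ot V$. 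I would then verify that the output lands in $\QSP$ itself, and not merely in its completion $\End(\FF{\phi}{})$: this is precisely what the support condition of Theorem~\ref{thm:support} secures, the first leg of $\gTKM{}$ being supported in $\QSP$, a property preserved under taking $\Tr_2$ over the finite-dimensional second leg.

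The main obstacle I anticipate is not conceptual but bookkeeping: one must check that the proof of Theorem~\ref{thm:t:Grothendieck} transports without change to the completions. Concretely, the coproduct lift $\Del\colon \End(\FF{\phi}{}) \to \End(\FF{\phi}{} \boxtimes \FF{}{})$ of Section~\ref{ss:completions}, the coproduct formula \eqref{gTK:coproduct2} for $\gTKM{}$, the partial-trace identities \eqref{linalg}, and the defining boundary-gauge relation \eqref{gJR-equation} are all available as operator identities on the relevant module categories, so the computation establishing multiplicativity and commutativity in the proof of Theorem~\ref{thm:t:Grothendieck} goes through line by line. Since $\cW_\phi$ separates points in $\QSP$ (Section~\ref{ss:QSPweight}), these identities pin down $\TM_\phi$ unambiguously.
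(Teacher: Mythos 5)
Your proposal follows the same route as the paper's proof---specialize Theorem~\ref{thm:t:Grothendieck} to the standard cylindrical structure $(\phi_q^{-1}, R_\phi, \Ups)$ of Theorem~\ref{thm:stdcylstr}, with $\wt K = D \cdot \Ups^{-1}$ a boundary gauge transformation by Proposition~\ref{prop:wtKbg}, and read all manipulations in the completion $\End(\FF{}{})$---but it has one genuine gap, and it sits exactly at the point where the proposition says more than a formal corollary: the claim that $\TM_\phi^{(V)}$ lies in $\QSP$ itself and not merely in a completion. You assert that ``this is precisely what the support condition of Theorem~\ref{thm:support} secures'', but that theorem only places $\TKM{}$ (hence $\gTKM{}$) in $\End(\FF{\phi}{} \boxtimes \FF{}{})$: its first leg is supported in the \emph{completion} $\End(\FF{\phi}{})$ of $\QSP$, not in $\QSP$. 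Taking $\Tr_2$ of an element of $\End(\FF{\phi}{}) \ot \End_\bsF(V)$ a priori yields only an element of $\End(\FF{\phi}{})$, so the support condition alone cannot deliver the stated codomain.

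The paper closes this step with an explicit factorization which your proposal lacks: writing $R = \ka \cdot \Xi$ as in Section~\ref{sec:uniR}, one obtains $\oTKM{\bullet,V} = (1 \ot D_V) \cdot (1 \ot \Ups_V^{-1}) \cdot \ka^\phi_{\bullet,V} \cdot (\Xi_{V^\psi,\bullet})_{21} \cdot (1 \ot \Ups_V) \cdot \ka_{\bullet,V} \cdot \Xi_{\bullet,V}$, see \eqref{K:factorization}, and finite-dimensionality of $V$ makes each factor an honest element of $\Uqg \ot \End_\bsF(V)$: for $\Xi_{\bullet,V}$ because only finitely many components $\Xi_\la$ act nontrivially on $V$, and for $(\Xi_{V^\psi,\bullet})_{21}$ by the analogous finiteness on the acting leg, which is the content of \cite[Thm.~5.6.1]{AV24}. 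Only after this algebraicity is established can the support property be brought in to conclude $\gTKM{\bullet,V} \in \QSP \ot \End_\bsF(V)$, whence $\TM_\phi^{(V)} \in \QSP$. You do in fact hold the key ingredient---your remark that only finitely many $\Xi_\la$ contribute when the second leg is finite-dimensional---but you deploy it to argue that the partial trace is ``finite'', which is automatic once $\dim(V) < \infty$; its actual role is to upgrade the first leg of $\gTKM{\bullet,V}$ from the completion to the algebra. With this one step repaired, the remainder of your proposal (balancedness of $\Uqg$ via the sovereign element $D$, the boundary gauge transformation, and the line-by-line transport of the proof of Theorem~\ref{thm:t:Grothendieck} to the completed setting) coincides with the paper's argument.
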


\begin{proof}
It is enough to apply the proof of Theorem \ref{thm:t:Grothendieck}, although the algebraic manipulations need to be understood in $\End(\FF{}{})$. 
To see that $\gTKM{\bullet,V}\in\QSP\ten\End(V)$ (and hence $\TM_\phi^{(V)}\in\QSP$), it is convenient to factorize $\gTKM{}$.
Recall the factorization $R = \kappa \cdot \Xi$ from Section \ref{sec:uniR}. 
For a $\Z$-linear function $f: \Plat \to \Plat$, consider $\kappa^f = (\kappa^f_{V,W})_{V,W \in \cW} \in \End(\FF{\cW}{2})$ with $\kappa^f_{V,W}$ acting on $V_\la \ot W_\mu$ as multiplication by $q^{(f(\la),\mu)}$
(so $\kappa = \kappa^\id$).
Then
\eq{\label{K:factorization}
\gTKM{\bullet, V} = (1 \ot D_V) \cdot (1 \ot \Ups_V^{-1}) \cdot \kappa^\phi_{\bullet,V} \cdot (\Xi_{V^\psi,\bullet})_{21} \cdot (1 \ot \Ups_V) \cdot \kappa_{\bullet,V} \cdot \Xi_{\bullet,V}
}
Since $V \in \cO$, $\Ups_V\in\End(V)$ and $\Xi_{\bullet,V}\in\Uqg\ten\End(V)$. Moreover, since  $V$ is finite-dimensional, one shows that $(\Xi_{V^\psi,\bullet})_{21}\in\Uqg\ten\End(V)$, see \cite[Thm.~5.6.1]{AV25b}.
Therefore, $\gTKM{\bullet, V} \in \QSP \ten \End(V)$.
\end{proof}

The elements $\TM_\phi^{(V)}$ are rather constrained, see Remark \ref{rmk:transfermatrices} (3), as the following result makes clear.

\begin{lemma}\label{lem:transfermatrix:trivial}
The image of $\TM_{\phi}$ lies in $U_q(\g_X)U_q(\fkh^\phi)^{\Plat}$, where $U_q(\fkh^\phi)^{\Plat}$ is the weight lattice refinement of $U_q(\fkh^\phi)$.
\end{lemma}

\begin{proof}
For simplicity, we discuss only the case $X=\emptyset$.
Then all factors in \eqref{K:factorization} are upper triangular matrices in the standard basis of $V$. 
In the case of $(\Xi_{V^\psi,\bullet})_{21}$, this holds since $\psi$ exchanges the positive and negative parts of $\Uqg$.
Moreover, $\Ups_V$, $\Xi_{\bullet,V}$ and $(\Xi_{V^\psi,\bullet})_{21}$ are unitriangular, so that 
\begin{align}
	 \TM_\phi^{(V)} 
	 =\Tr_V ((1 \ot D_V) \cdot \kappa^{\id+\phi}_{\bullet,V})
=\sum_{\mu\in\Plat}\dim(V_\mu)k_{\mu+\phi(\mu)-2\rho}
\end{align}
where $k_{\lambda}|_{V_\mu}=q^{(\lambda,\mu)}$.
\end{proof}

In particular, when $X=\emptyset$ and $\tau=\id$, $\TM_\phi^{(V)}$ is a scalar multiple of the identity.

\subsection{The boundary transfer matrix for the Drinfeld element}
We now assume that $(\Uqg, \QSP)$ is a QSP of finite type. 
In this case, the standard cylindrical structure is gauge equivalent to $(\varphi, R_{21}^{-1}, K^{\sf BK})$ where $\varphi\colon\Uqg\to\Uqg$ is a quasitriangular Hopf algebra automorphism, and $K^{\sf BK}\in\End(\FF{}{})$ is the basic K-matrix considered in \cite[Sec.~7.3]{BK19}. 

\begin{prop}\label{prop:kolb}
Whenever $\varphi=\id$, there is a ring homomorphism $\TM_{\sf BK}$ from $\big[ \Mod_{\sf fd}(\Uqg) \big]$ to $\QSP$ given by
$\TM_{\sf BK}^{(V)}\coloneqq \Tr_2 \big( (1\ten u_V)\mathbb{K}^{\sf BK}_{\bullet, V} \big)$
where $V\in\Mod_{\sf fd}(\Uqg)$, $u$ is the Drinfeld element, and $\mathbb{K}^{\sf BK}$ is the tensor K-matrix associated to $K^{\sf BK}$.	
\end{prop}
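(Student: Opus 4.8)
The plan is to exhibit $\TM_{\sf BK}$ as a special case of the boundary transfer matrix of Theorem~\ref{thm:t:Grothendieck}, taking the Drinfeld element as the boundary gauge transformation and the Balagovi\'c--Kolb gauge of the standard cylindrical structure as input. First I would record that, under the hypothesis $\varphi=\id$, the gauge-transformed datum $(\varphi,R_{21}^{-1},K^{\sf BK})=(\id,R_{21}^{-1},K^{\sf BK})$ is a cylindrical structure on $(\Uqg,\QSP)$ whose underlying twist pair is precisely $(\id,R_{21}^{-1})$. This is exactly the twist pair for which Proposition~\ref{prop:ubgt}(1) identifies the Drinfeld element $u$ as a boundary gauge transformation, so the abstract input of Theorem~\ref{thm:t:Grothendieck} is available with no extra verification.

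With this identification in hand, Theorem~\ref{thm:t:Grothendieck} applies with $g=u$. The associated tensor K-matrix is $\gTKM{}=(1\ten u)\cdot\mathbb{K}^{\sf BK}$, and the assignment $[V]\mapsto\Tr_2(\gTKM{\bullet,V})=\Tr_2\big((1\ten u_V)\,\mathbb{K}^{\sf BK}_{\bullet,V}\big)$ is the claimed map $\TM_{\sf BK}$. Multiplicativity, additivity and invariance under isomorphism then follow directly from the abstract theorem, so no further combinatorics is needed at this level.

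The one point requiring care, exactly as in the proof of Proposition~\ref{prop:t:Grothendieck:finitetype}, is that the entire argument has to be read inside the completion $\End(\FF{}{})$ rather than in $\Uqg$, since $R$, $u$ and $\mathbb{K}^{\sf BK}$ are topological. I would verify two things. First, that $u$ is genuinely a boundary gauge transformation in the completed sense: by the discussion of the sovereign element in Section~\ref{sec:quantumgroups}, the element $D$, and hence $u=Db^{-1}$, lies in $\End(\FF{\cO}{})$ and makes $\Uqg$ balanced, so that Proposition~\ref{prop:ubgt}(1) is valid at the level of $\End(\FF{}{})$. Second, that $\TM_{\sf BK}^{(V)}$ lands in $\QSP$ and not merely in its completion: here Theorem~\ref{thm:support} gives $\mathbb{K}^{\sf BK}\in\End(\FF{\phi}{}\boxtimes\FF{}{})$ (the gauge passage from $\Ups$ to $K^{\sf BK}$ multiplies only the second leg and so preserves the support), and a factorization of $\mathbb{K}^{\sf BK}_{\bullet,V}$ through $\kappa$ and $\Xi$ as in Proposition~\ref{prop:t:Grothendieck:finitetype} shows that, for finite-dimensional $V$, one has $\mathbb{K}^{\sf BK}_{\bullet,V}\in\QSP\ten\End(V)$; multiplying by $1\ten u_V$ and tracing over $V$ keeps the result in $\QSP$.

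The main obstacle is therefore bookkeeping rather than conceptual: one must check that the boundary condition of Definition~\ref{def:bgt} and the support property both survive the passage to $\End(\FF{}{})$, and that the gauge equivalence bringing the standard cylindrical structure of Theorem~\ref{thm:stdcylstr} into Balagovi\'c--Kolb form is compatible with these manipulations. Since $\Mod_{\sf fd}(\Uqg)$ carries nontrivial objects only when $\dim(\fkg)<\infty$, this reduces the support analysis to the finite-type case already treated in Proposition~\ref{prop:t:Grothendieck:finitetype}. Once the homomorphism is in place, its comparison with the morphism $\bm\Phi$ of \cite{Kol20}---and hence the surjectivity onto $Z(\QSP)$ recorded in Remark~\ref{rmk:transfermatrices}(3)---follows by matching the two constructions.
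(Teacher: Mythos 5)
Your proposal is correct and takes essentially the same route as the paper: the paper's proof simply invokes Theorem~\ref{thm:t:Grothendieck} together with Proposition~\ref{prop:gen-wtKbg} (choosing $K'=b$, so that $\wt K = D\cdot b^{-1} = u$), whereas you invoke Proposition~\ref{prop:ubgt}(1), which the paper itself notes in Remark~\ref{rmk:transfermatrices}(2) is exactly that special case. Your added bookkeeping on completions and on the support of $\mathbb{K}^{\sf BK}$ is consistent with how the paper treats these points in the proof of Proposition~\ref{prop:t:Grothendieck:finitetype}, so nothing is missing.
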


\begin{proof}
	It is enough to apply Theorem~\ref{thm:t:Grothendieck} and Proposition~\ref{prop:gen-wtKbg}.
\end{proof}

\begin{remarks}\label{rmk:kolb-gendual}
\hfill
\begin{enumerate}\itemsep0.25cm
\item 	
\label{rmk:kolb}
The map $\TM_{\sf BK}$ coincides with the homomorphism $\bm\Phi : [\Oint] \to \Uqk$ constructed by Kolb in \cite{Kol20}, which is proved to be surjective over the centre of $\Uqk$.
Kolb's construction extends to the case $\varphi\neq\id$, provided that $\Oint$ is replaced by its equivariantization with respect to $\varphi$. The result above extends under the same condition.
\item 
\label{rmk:generalizeddualK:QSP}
By Proposition~\ref{prop:gen-wtKbg}, we obtain a boundary transfer matrix for any generalized dual K-matrix $\wt K=D\cdot (K')^{-1}$ where $K'$ is  a basic K-matrix with the same underlying twist pair of $K$.

From this point of view, it is convenient to consider the \emph{semistandard} cylindrical structure on $(\Uqg, \QSP)$, which has the form $(\omega\circ\tau, 1\ten 1, K)$ where $K\in\End(\FF{}{})$ is the semistandard K-matrix, see \cite{AV22}. Thus, we obtain a boundary transfer matrix map by setting $K'$ equal to the semistandard K-matrix of an arbitrary QSP subalgebra whose underlying diagram automorphism is $\tau$.
\rmkend
\end{enumerate}
\end{remarks}

\Omit{
\andreacomment{It would be interesting to understand if Stefan's map is somewhat canonical from our point of view (= gauge invariant in some sense). If not, are there similar maps $\TM$ for the standard or the semistandard cylindrical structure?}

\bartcomment{Stefan's $K'$ works nicely for the case $g = T_I \cdot T_X$. 
It is not invariant under all gauge transformations, since $u^{-1} \cdot D$ satisfies the same coproduct formula as $g \cdot \Ad(\gamma^{-1}) \cdot \Ups$, i.e. the coproduct formula with $\psi = \id$. 
My comment below Remark 3.5.2 essentially tells you how to choose $K'$ for another $g$ such that the resulting cylindrical structures are the same.
Are you proposing a weaker notion of invariance?}
}

\section{Boundary transfer matrices for quantum affine symmetric pairs} \label{sec:affine}

Henceforth, assume that our QSP $(\Uqg,\QSP)$ is of untwisted affine type.
We are actually interested in finite-dimensional modules of untwisted quantum loop algebras, viewing the ring homomorphisms $\TM_{\phi}$ as refined representation-theoretic tools, in the spirit of \cite{FR99}. 
We will define spectral analogues of $\TM^{(V)}_\phi$. 
The simplification explained in Lemma \ref{lem:transfermatrix:trivial} does not occur since the relevant matrices are not all triangular in the weight bases of these modules.

\subsection{Affine Lie algebras \cite{Kac90}}
Let $\fkg$ be of untwisted affine type.
We write 
\eq{
I = \{ 0, 1,\ldots, r\}, \qq \Ifin = \{ 1,\ldots, r\}
}
so that $\gfin = \C \langle \{ e_i,f_i \}_{i \in \Ifin} \rangle$ is the underlying finite-dimensional Lie algebra with Cartan subalgebra $\hfin = \bigoplus_{i \in \Ifin} \C h_i$ and weight lattice $\Pfin \subset \hfin^*$, given by the $\Z$-dual of $\Sp_\Z \{ h_i \}_{i \in \Ifin}$.

Let $c \in \fkh'$ be the canonical central element and $h^\vee = \rho(c)$ the dual Coxeter number. 
We have the linear decomposition $\fkh' \cong \hfin \oplus \C c$ with associated projection $\overline{\phantom{\imath} \hspace{-4pt} \cdot \hspace{-4pt} \phantom{\imath}} : \fkh' \to \hfin$.
Denote the monoidal category of finite-dimensional type-${\bf 1}$ $U_q(\fkg')$-modules by $\cC$.
For all $V \in \cC$ we have the decomposition
\eq{
V = \bigoplus_{\la \in \Pfin} V_\la, \qq V_\la = \{ v \in V \, | \, K_i \cdot v = q_i^{\la(\overline{h_i})v} v \text{ for all } i \in I \}.
}
Consider the untwisted loop algebra $\Lg = \gfin \otimes \C[t, t^{-1}]$ and its q-deformation $U_q(\Lg)$, which identifies with the quotient Hopf algebra $U_q(\fkg') / (K_\del - 1)$.
The category $\cC$ identifies with the monoidal category of finite-dimensional (type-${\bf 1}$) $U_q(\Lg)$-modules and let $\End(\FF{\cC}{})$ be the corresponding completion of $U_q(\Lg)$.

\subsection{Grading shift and spectral R-matrices}

For any nontrivial group homomorphism $s: \Qlat \to \Z$ such that $s(\al_i) \ge 0$ we let $\Sigma^{s}_z: U_q(\fkg) \to U_q(\fkg) \ot \bsF[z,z^{-1}]$ be the corresponding grading shift, \ie the Hopf algebra homomorphism acting on the root space $U_q(\fkg)_\la$ as multiplication by $z^{s(\la)}$.
For any $U_q(\fkg')$-module $V$, we denote the corresponding representation map by $\pi_V: U_q(\fkg') \to \End(V)$.
Then we consider the grading-shifted representation 
\eq{
\pi^s_{V,z} = \pi_V \circ \Sigma^s_z: U_q(\fkg') \to \End(V) \otimes \bsF[z,z^{-1}].
}

Following \cite{Dri86}, the coefficients of the formal power series
\eq{ 
R^s(z) = (\id \ot \Sigma^s_z)(R) = (\Sigma^s_{z^{-1}} \ot \id)(R)
}
have a well-defined action on $V \ot W$ for all $V,W \in \cC$. 
We denote the corresponding $\End(V \ot W)$-valued formal power series solution of the spectral Yang-Baxter equation by $R^s_{V,W}(z)$.

\subsection{Crossing symmetry}

We now derive a functional equation for the action of $R^s_{V,W}(z)$ on tensor products of finite-dimensional $U_q(\fkg')$-modules, following \cite[Sec.~5.2]{FR92}.
Such a relation is also called \emph{crossing symmetry} and is simply a spectral analogue of \eqref{R:doubledual2}.
By \eqref{R:antipode} we have
\eq{ \label{spectralR:antipode}
(\id \ot S^{-1})(R^s(z)) = R^s(z)^{-1}\,.
}
Hence, for arbitrary $V,W \in \cC$, by \eqref{spectralR:antipode} we obtain
$R^s_{V,{}^*W}(z) = (R^s_{V,W}(z)^{-1})^{\t_2}$
and therefore
\eq{ \label{spectralR:doubledual1}
\hspace{-4pt} 
R^s_{V,{}^{**}W}(z) = (((R^s_{V,W}(z)^{-1})^{\t_2})^{-1})^{\t_2}.
} 

The specific grading shift we need is the \emph{$\tau$-minimal grading shift} and is defined in terms of $\tau$, the diagram automorphism factor of the Lie algebra automorphism $\phi$ used to define $\QSP$.
Namely, we define $s \in \Hom_{\sf grp}(\Qlat,\Z)$ by 
\eq{
s(\al_i) = \begin{cases} 1 & \text{if } i \in \{ 0,\tau(0) \}, \\
0 & \text{otherwise}. \end{cases}
}
We will abbreviate
\eq{
\Sigma_z = \Sigma_z^{s}, \qq R(z) = R^{s}(z), \qq \pi_{V,z} = \pi^{s}_{V,z}.
}

On the other hand, let $\Sigma^{\sf hom} = \Sigma^{s_{\sf hom}}$ be the homogeneous grading shift, so $s_{\sf hom}(\al_i) = \del_{i0}$ (clearly, $s = s_{\sf hom}$ if and only if $\tau(0)=0$), and consider the corresponding spectral R-matrix $R^{\sf hom}(z) = (\id \ot \Sigma^{\sf hom}_z)(R)$.
It is convenient to have a conversion rule for these two grading shifts and the corresponding spectral R-matrices.
Set 
\eq{
f \coloneqq  s(\del) / s_{\sf hom
}(\del) = |\{ 0,\tau(0) \}| \in \{1,2\}, \qq \qq h^\vee_\phi \coloneqq  h^\vee/f \in \tfrac{1}{2}\Z.
}
Let $\Qfin \coloneqq  \sum_{i=1}^r \Z \al_i \subset \Pfin$ and extend the restriction $(s - f \, s_{\sf hom})|_{\Qfin}$ to a group homomorphism: $\Pfin \to \Q$ in an arbitrary way.
This extension will also be denoted $s - f \, s_{\sf hom}$ and takes values in $\tfrac{1}{m} \Z$ for some positive integer $m$.
Since $s - f \, s_{\sf hom}$ annihilates $\del$, we obtain the rule $\pi_{V,z} = \Ad(z^{s - f \, s_{\sf hom}}) \circ \pi^{s_{\sf hom}}_{V,z^{f}}$.
It implies
\eq{ \label{spectralR:conversion}  
\begin{aligned}
R_{V,W}(z) &= \Ad(\Id_V \ot z^{s - f \, {s_{\sf hom}}})( R^{\sf hom}_{V,W}(z^{f}) ) .
\end{aligned}
}

Denote the sum of fundamental $\gfin$-weights by $\rhofin$ and extend it to an element of $(\fkh')^*$ by setting $\rhofin(c)=0$.
Define linear maps $\Dfin_{V}: V \to V$ ($V \in \cC$) via
\eq{ \label{Daffine:def}
\Dfin_{V}(v) = q^{-2(\rhofin,\la) + 2(h^\vee_\phi s - h^\vee s_{\sf hom})(\la)} v, \qq v \in V_\la, \qq \la \in \Pfin
}
and collect them into the tuple $\Dfin = (\Dfin_{V})_{V \in \cC} \in \End(\FF{\cC}{})$.

\begin{lemma} \label{lem:spectralR:funcrel}
Set $p=q^{h_\phi^\vee}$.
The action of $R(z)$ on tensor products in $\cC$ satisfies
\eq{
\begin{aligned}
(((R_{V,W}(z)^{-1})^{\t_2})^{-1})^{\t_2} &= \Ad(1 \ot \Dfin^{-1}) \big( R(p^2 z) \big)_{V,W}.
\end{aligned}
}
\end{lemma}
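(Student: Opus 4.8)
The plan is to read the identity as the finite-dimensional, spectral counterpart of the universal crossing symmetry \eqref{R:doubledual2}, and to prove it in two stages: first rewrite the iterated partial transpose on the left as the action of $R(z)$ on a double left dual, and then identify that double left dual, as a graded module, with a $\Dfin$-twist of $W$ carrying a shifted spectral parameter.

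For the first stage I would invoke the spectral antipode relation \eqref{spectralR:antipode} exactly as it is used just above the statement: applying it once gives $R_{V,{}^{*}W}(z)=(R_{V,W}(z)^{-1})^{\t_2}$, and iterating yields $R_{V,{}^{**}W}(z)=(((R_{V,W}(z)^{-1})^{\t_2})^{-1})^{\t_2}$. Thus the left-hand side of the lemma is simply $R_{V,{}^{**}W}(z)$, and the whole problem reduces to computing the second leg ${}^{**}W$.

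For the second stage I would use that, as a twisted module, ${}^{**}W=W^{S^{-2}}$, together with $S^{-2}=\Ad(D^{-1})$ for the sovereign element $D$ of \eqref{D:def}. Evaluating $R(z)=\sum_i a_i\ot z^{s(\beta_i)}b_i$ on $V\ot{}^{**}W$ therefore replaces each root vector $b_i$ of affine weight $\beta_i$ by $S^{-2}(b_i)=q^{2(\rho,\beta_i)}b_i$. The key is to split $(\rho,\beta_i)$ into its finite and imaginary parts: writing $\beta_i=\bar\beta_i+n_i\delta$, one has $(\rho,\beta_i)=(\rhofin,\bar\beta_i)+n_i h^\vee$ using $(\rho,\delta)=h^\vee$. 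The finite part is precisely the scalar produced by conjugating $b_i$ by $\Dfin$, once the correction terms $2(h^\vee s_{\sf hom}-h^\vee_\phi s)$ in \eqref{Daffine:def} are taken into account via \eqref{Adg:rootvectors}; the imaginary part $n_i h^\vee$ is, by $h^\vee=f\,h^\vee_\phi$ and $s(\delta)=f$, exactly the change in $z^{s(\beta_i)}$ effected by rescaling $z$ by $p^{2}$. The conversion rule \eqref{spectralR:conversion} between the $\tau$-minimal grading $s$ and the homogeneous grading $s_{\sf hom}$ is what guarantees that these two contributions separate cleanly, so that after reassembling one obtains $\Ad(1\ot\Dfin^{-1})(R(p^2z))_{V,W}$.

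The main obstacle is the bookkeeping of the spectral parameter through dualization. The grading shift $\Sigma_z$ does not commute with the antipode on the nose — there is a hidden inversion $z\mapsto z^{-1}$ coming from the fact that $S$ reverses weights — so the precise sign and magnitude of the shift must be tracked with care. Reconciling this inversion with the correction terms in \eqref{Daffine:def}, and with the two normalizations of the grading in \eqref{spectralR:conversion}, so as to land on exactly the factor $p^{2}$ (rather than $p^{-2}$, or a bare homogeneous power $q^{\pm 2h^\vee}$) is the delicate point; everything else is the routine linear algebra of partial transposes already packaged in \eqref{linalg} and \eqref{R:antipode}.
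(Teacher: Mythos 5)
Your proposal is correct and takes essentially the same route as the paper: the paper likewise reduces the left-hand side to $R_{V,{}^{**}W}(z)$ via the iterated crossing-symmetry identities from \eqref{spectralR:antipode}, writes $\rho = h^\vee \La_0 + \rhofin$ so that the affine part of the resulting $S^{\mp 2}$-factor becomes a homogeneous spectral shift (via $\Ad(q^{2(\rho-\rhofin)}) = \Sigma^{\sf hom}_{q^{2h^\vee}}$, which is exactly your finite/imaginary split of $(\rho,\beta_i)$), and then invokes the conversion rule \eqref{spectralR:conversion} together with \eqref{Daffine:def} and \eqref{Adg:rootvectors} to assemble $\Ad(1 \ot \Dfin^{-1})(R(p^2z))$. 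One small correction to your commentary: the antipode preserves the $\Qlat$-grading (e.g.\ $S(E_i) = -\Kg{i}^{-1}E_i$ has weight $\rt{i}$), so $\Sigma_z$ commutes with $S^{\pm 1}$ and there is no hidden inversion $z \mapsto z^{-1}$ here — the only delicate bookkeeping is the two gradings, which \eqref{spectralR:conversion} handles, just as you use it.
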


\begin{proof}
We first review the proof in the case of the homogeneous grading.
By combining \eqref{spectralR:doubledual1} and $S^{-2} = \Ad(q^{2\rho})$ we obtain
\begin{equation}
(((R^{\sf hom}_{V,W}(z)^{-1})^{\t_2})^{-1})^{\t_2} = \big(\id \ot \Ad(q^{2\rho})\big) \big(R^{\sf hom}_{V,W}(z)\big).
\end{equation}
Next, we recall that $\rho = \rhofin + h^\vee \La_0$, where $\La_0 \in (\fkh')^*$ is defined by $\La_0(h_i) = \del_{i,0}$ for all $i \in I$, see e.g.~\cite[Ex.~7.16]{Kac90}. 
By a straightforward check on generators, one obtains $\Ad(q^{2\rho})  = \Ad(q^{2\rhofin}) \circ \Sigma^{\sf hom}_{q^{2h^\vee}}$.
Hence
\begin{equation}
(((R^{\sf hom}_{V,W}(z)^{-1})^{\t_2})^{-1})^{\t_2} = \big( \id \ot \Ad(q^{2\rhofin})\big) \big(R^{\sf hom}_{V,W}(q^{2h^\vee}z)\big),
\end{equation}
which is the desired identity for the homogeneous grading.
The identity for the $\tau$-minimal grading readily follows by replacing $z$ by $z^f$, conjugating by $\Id_V \ot z^{s - f \, s_{\sf hom}}$, and applying the conversion rules \eqref{spectralR:conversion}.
%
\end{proof}

\subsection{Spectral K-matrices} \label{sec:spectralK}

For any pseudo-involution $\phi: \fkg \to \fkg$, the automorphism $\phi_q$ descends to an automorphism of $U_q(\Lg)$ and the subalgebra $\QSP$ embeds into $U_q(\Lg)$ as a right coideal subalgebra.
We now follow \cite[Sec.~4]{AV25a} and observe that for the particular grading shift $\Sigma_z$ considered in Lemma \ref{lem:spectralR:funcrel}, there exists a cylindrical structure $(\psi,J,K)$ on the affine QSP $(\Uqg,\QSP)$ such that 
\eq{ \label{psi:Sigma}
\psi \circ \Sigma_z = \Sigma_{1/z} \circ \psi.
} 
As a consequence, one obtains the following relations for the \emph{spectral universal K-matrix} $K(z) = \Sigma_z(K) \in \End(\FF{\cC}{})((z))$ 
\begin{align}
K(z) \cdot \Sigma_z(b) &= \Sigma_{1/z}(b) \cdot K(z) \qq \text{for all } b \in \QSP, \label{K:spectral:intw} \\
\Delta_{z/y}(K(y)) &= J^{-1} \cdot (1 \ot K(z)) \cdot R(yz)^\psi \cdot (K(y) \ot 1), \label{K:spectral:coproduct}
\end{align}
where $\Delta_z \coloneqq  (\id \ot \Sigma_z) \circ \Delta$.
Furthermore the coefficients of $K(z)$ have a well-defined action on any $V \in \cC$, leading to $\End(V)$-valued formal Laurent series $K_V(z)$ satisfying a twisted $\QSP$-intertwining condition induced by \eqref{K:spectral:intw}.

Following the discussion in \cite[Secs.~5 and 6]{AV25a}, we note that this intertwining condition is a consistent finite linear system with a 1-dimensional solution space if $V \in \cC$ is irreducible as a $\Uqgp$-module. 
One obtains that $K_V(z)$ is proportional to a matrix $K^{\sf trig}_V(z)$ depending rationally on $z$, called \emph{trigonometric K-matrix}. 
Both $K_V(z)$ and $K^{\sf trig}_V(z)$ satisfy the generalized spectral reflection equation
\eq{ \label{KVz:RE}
\begin{aligned}
& R_{W^\psi,V^\psi}(z/y)_{21} \cdot K_W(z)_2 \cdot R_{V^\psi,W}(yz) \cdot K_V(y)_1 = \\
& \qq = K_V(y)_1 \cdot R_{W^\psi,V}(yz)_{21} \cdot K_W(z)_2 \cdot R_{V,W}(z/y)
\end{aligned}
}
for all $V,W \in \cC$, which first appeared in the works \cite{FM91,Che92,KS92}.\\

Taking $q \in \C^\times$ such that $1 \notin q^\Z$, in direct analogy with results on R-matrices \cite{EM02,FR92,KS95}, we propose the following conjecture.
\begin{conjecture}
For all $V \in \cC$ (not necessarily irreducible) and for generic QSP parameters, $K_V(z)$ is the Laurent series expansion of a meromorphic matrix-valued function. \hfill \rmkend 
\end{conjecture}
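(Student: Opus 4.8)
The plan is to mirror the established theory of meromorphicity for spectral R-matrices \cite{FR92,KS95,EM02}, splitting the statement into two parts: meromorphicity on $\Uqgp$-irreducible modules, and propagation to arbitrary finite-dimensional modules through composition series. For the first part we may build on Section~\ref{sec:spectralK} (following \cite{AV22b}): on an irreducible $V$ the series $K_V(z)$ is proportional to the trigonometric K-matrix $K^{\sf trig}_V(z)$, which depends rationally on $z$. Hence, on irreducibles, meromorphicity of $K_V(z)$ is equivalent to meromorphicity of the scalar normalization $c_V(z)$ defined by $K_V(z) = c_V(z)\,K^{\sf trig}_V(z)$.

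To control $c_V(z)$ I would extract a first-order $q$-difference equation from the functional relations at our disposal. The intertwining relation \eqref{K:spectral:intw} already forces a one-dimensional solution space on a $\Uqgp$-irreducible $V$, so $c_V(z)$ is determined once a rational representative $K^{\sf trig}_V(z)$ is fixed; the generalized reflection equation \eqref{KVz:RE} supplies a unitarity-type constraint relating $z$ and $1/z$, while a crossing-symmetry identity for $K(z)$ --- the K-matrix analogue of Lemma~\ref{lem:spectralR:funcrel}, obtained by combining that lemma with the coproduct formula \eqref{K:spectral:coproduct} --- relates $z$ and $p^2 z$. As in the R-matrix setting, the resulting $q$-difference equation admits a canonical minimal solution given by a convergent infinite product of $q$-Gamma factors, manifestly meromorphic with an explicit divisor.

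For an arbitrary $V \in \cC$ I would argue by a composition series of $V$ as a $\Uqgp$-module. Relation \eqref{K:spectral:intw} shows that $K_V(z)$ is block-triangular with respect to any filtration by submodules, its diagonal blocks being the (already meromorphic) K-matrices of the irreducible subquotients and its off-diagonal blocks being the solution of an inhomogeneous linear system whose coefficients are the meromorphic associated-graded data. Combining this with the multiplicativity encoded in \eqref{K:spectral:coproduct} and the fact that every object of $\cC$ is a subquotient of tensor products of fundamental modules, one propagates meromorphicity from the irreducible building blocks to all of $\cC$.

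The genuine obstacle is analytic rather than algebraic and is localized in the \emph{abelian} part of $K(z)$. Using the factorization $R = \kappa \cdot \Xi$ of Section~\ref{sec:uniR} together with the support of $\Ups$ on the restricted root lattice, one sees that in affine type $K(z)$ has a nontrivial imaginary-root component: the contributions from weights proportional to $\delta$ assemble into an infinite series in $z$, exactly as for the imaginary part of the quantum affine R-matrix. Proving that this series converges in an annulus and locating its poles is the real content of the conjecture, with no purely formal shortcut; it would require adapting the analytic estimates of \cite{KS95,EM02} (or the conjectural infinite-product factorization of $\Ups$) to the coideal setting. This is precisely where generic QSP parameters are indispensable, ensuring that the scalar factors of distinct constituents do not resonate and that the poles do not accumulate.
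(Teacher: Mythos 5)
You should first note a category error: the statement you were asked to prove is stated in the paper as a \emph{Conjecture}, with no proof offered by the authors, so there is no argument of theirs to compare against; the only question is whether your proposal settles the conjecture, and by your own admission it does not. Your final paragraph concedes that the convergence and meromorphic continuation of the abelian (imaginary-root) part of $K(z)$ --- equivalently, of the scalar normalization $c_V(z)$ relating $K_V(z)$ to $K^{\sf trig}_V(z)$ --- is ``the real content of the conjecture, with no purely formal shortcut,'' deferred to an unexecuted adaptation of the analytic estimates of \cite{KS95,EM02}. A write-up that localizes the difficulty correctly but then defers it is a research plan, not a proof; everything that precedes that admission (rationality of $K^{\sf trig}_V(z)$ on irreducibles, one-dimensionality of the solution space) is already in the paper via \cite{AV22b} and Section~\ref{sec:spectralK}, so the proposal adds no new proved content at the critical point.

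Beyond the conceded gap, two of your algebraic steps are themselves unestablished. First, the $q$-difference equation for $c_V(z)$ relies on ``a crossing-symmetry identity for $K(z)$ --- the K-matrix analogue of Lemma~\ref{lem:spectralR:funcrel}, obtained by combining that lemma with the coproduct formula \eqref{K:spectral:coproduct}.'' But Lemma~\ref{lem:spectralR:funcrel} concerns $R(z)$ only, and \eqref{K:spectral:coproduct} relates $K$ at two \emph{independent} parameters $y,z$ through $R(yz)$ and the twist $J$; extracting from it a closed first-order relation between $c_V(z)$ and $c_V(p^2z)$ would require a boundary crossing relation that you must first formulate and prove at the universal level (it is exactly the kind of statement the paper's spectral dual K-matrix $\wt K(z) = \Dfin \cdot K(pz)^{-1}$ gestures toward, but no such identity for $c_V$ is derived anywhere). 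Second, the composition-series step is not sound as stated: on a reducible $V$ the intertwining condition \eqref{K:spectral:intw} no longer has a one-dimensional solution space, and since $K_V(z)$ intertwines $V$ with a $\psi$-twisted module, it maps a submodule to \emph{some} submodule of the twisted target, not necessarily the corresponding one, so block-triangularity with respect to an arbitrary filtration is not automatic; moreover the ``inhomogeneous linear system'' for the off-diagonal blocks can degenerate exactly at resonant parameter values, which is where your unquantified appeal to ``generic QSP parameters'' hides a genuine argument. Finally, invoking the conjectural infinite-product factorization of $\Ups$ as an alternative route compounds the problem, since that factorization is itself only conjectured in the paper.
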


\subsection{K-matrices for Kirillov-Reshetikhin modules}
Recall that a Kirillov-Reshetikhin (KR) module is an irreducible object in $\cC$ with trivial Drinfeld polynomials except for one, say associated to node $i \in \Ifin$, where the roots are given by a geometric progression in $\bsF$ with ratio $q_i$ (see, \eg \cite{CP94, Her06} for more details). 
Suppose that $V, W \in \cC$ are KR modules and that $\phi$ is $\tau$-\emph{restrictable}, \ie the subdiagram $\Ifin$ of finite type is $\tau$-stable (or, equivalently, $\tau$ fizes the affine node).
Then, up to a gauge transformation, $K_V(z)$ and $K_W(z)$ satisfy a generalized spectral reflection equation replace \eqref{KVz:RE} where $\psi$ is replaced by a diagram automorphism, see \cite[Thm.~7.8.1]{AV25a}.

The study of solutions of spectral reflection equations in various special cases, see, \eg \cite{RV18,KOW22}, suggests the following conjecture.

\begin{conjecture}
Consider the equation \eqref{KVz:RE} with $\psi$ replaced by a diagram automorphism. 
Symmetrizable invertible solutions of this equation in tensor products of KR modules are rescaled actions of the grading-shifted universal K-matrix associated to a QSP of affine type with generalized parameter constraints as described in Section~\ref{prob:generalizedconstraints}.
\end{conjecture}

\begin{remark}
Conversely, by \cite{RV18} there are non-symmetrizable invertible solutions of \eqref{KVz:RE}, some of which cannot be obtained by taking limits of symmetrizable solutions. 
The general description of the corresponding coideal subalgebras $B \subset U_q(\fkg)$ is an open question. 
It appears that only the \emph{triangular q-Onsager algebra}, which embeds into $U_q(\wh\fksl_2)$, has been explicitly described, see \cite{BB13}. \rmkend
\end{remark}

\subsection{Spectral tensor and dual K-matrices}

Given the spectral version of the basic K-matrix from Section \ref{sec:spectralK}, the spectral tensor K-matrix is given by
\eq{
\TKM{}(z) \coloneqq  (\id \ot \Sigma_z)(\TKM{}) = \big( R(z)^\psi \big)_{21} \cdot (1 \ot K(z)) \cdot R(z),
}
in $\End(\FF{\phi}{} \boxtimes \FF{\cC}{})((z))$ with shifted coproduct formula
\eq{
(\id \ot \Delta_{z/y})(\TKM{}(y)) = J_{23}^{-1} \cdot \TKM{}(z)_{13} \cdot \big(R(y z)^\psi\big)_{23} \cdot \TKM{}(y)_{12}
}
in $\End(\FF{\phi}{} \boxtimes \FF{\cC}{} \boxtimes \FF{\cC}{})((y,z/y))$, see \cite[Sec.~6.4]{AV25b}.
We use the following invertible element of $\End(\FF{\cC}{})((z))$ as a \emph{parameter-dependent} gauge transformation: 
\eq{
\label{dualK:spectral:def} \wt K(z) \coloneqq  \Dfin \cdot K(pz)^{-1}.
}
Crucially, note the spectral parameter shift in terms of $p = q^{h_\phi^\vee}$.
Now we define
\eq{
\gTKM{}(z) \coloneqq  (1 \ot \wt K(z)) \cdot \TKM{}(z)  \qq \in \End(\FF{\phi}{} \boxtimes \FF{\cC}{})((z)).
}

\begin{lemma}  \label{lem:gTK:spectral}
The following spectral analogue of \eqref{gTK:coproduct2} holds:
\eq{ \label{gTK:spectral:coproduct2}
	(\id \ot \Delta_{z/y})(\gTKM{}(y)) = \wt R(yz)^{\gpsi(y)}_{23} \cdot \gTKM{}(z)_{13} \cdot R(yz)^{\gpsi(y)}_{23} \cdot \gTKM{}(y)_{12},
}
where 
\eq{
\gpsi(y) \coloneqq  \Ad(\wt K(y)) \circ \psi \qq \qq  \wt R(z) \coloneqq  \Ad(1 \ot \Dfin)\big( R(p^2 z)^{-1} \big) .
}
Moreover, for all $V,W \in \cC$ we have
\eq{ \label{bartildeR:spectral:identity}
(\wt R(yz)^{\gpsi(y)})_{V,W}^{\t_2} \cdot (R(yz)^{\gpsi(y)})_{V,W}^{\t_2} = \Id_{V \ot W}.
}
\end{lemma}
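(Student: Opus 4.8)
The plan is to establish the two displayed identities separately, viewing \eqref{gTK:spectral:coproduct2} as the spectral lift of the coproduct formula \eqref{gTK:coproduct2} and \eqref{bartildeR:spectral:identity} as the spectral lift of the crossing relation that underlies Lemma~\ref{lemma-gJR}.

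For the coproduct formula \eqref{gTK:spectral:coproduct2}, I would first compute the shifted coproduct $(\id \ot \Delta_{z/y})(\gTKM{}(y))$ of $\gTKM{}(y) = (1 \ot \wt K(y)) \cdot \TKM{}(y)$. Expanding via the spectral coproduct of $\TKM{}(y)$ displayed just before the Lemma and the spectral coproduct of the dual K-matrix $\wt K(y) = \Dfin \cdot K(py)^{-1}$ — the latter obtained by applying $\id \ot \Sigma_{z/y}$ to \eqref{tildeK:coproduct}, or by inverting \eqref{K:spectral:coproduct} and multiplying by $\Dfin$ — one collects the Drinfeld-twist factor into the slot occupied by $\gJ_{23}^{-1}$ in \eqref{gTK:coproduct2}. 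Conjugating $\psi$ by $\wt K(y)$ to form $\gpsi(y) = \Ad(\wt K(y)) \circ \psi$ and invoking the definition $\wt R(z) = \Ad(1 \ot \Dfin)(R(p^2 z)^{-1})$, this slot becomes exactly $(\wt R(yz)^{\gpsi(y)})_{23}$, and the remaining factors reorganize into $\gTKM{}(z)_{13} \cdot R(yz)^{\gpsi(y)}_{23} \cdot \gTKM{}(y)_{12}$, as in the purely algebraic computation. This step is essentially bookkeeping of the spectral parameters, and I expect it to go through once the $p$-shift hidden in $\wt K(y)$ is propagated correctly through $\Delta_{z/y}$.

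For the crossing identity \eqref{bartildeR:spectral:identity}, I would first reduce it to a statement about a single crossing. Since the right partial transpose ${}^{\t_2}$ is an involution, the asserted equality $(\wt R(yz)^{\gpsi(y)})_{V,W}^{\t_2} \cdot (R(yz)^{\gpsi(y)})_{V,W}^{\t_2} = \Id$ is equivalent to $\wt R(yz)_{V^{\gpsi(y)}, W} = \big( (R(yz)_{V^{\gpsi(y)}, W}^{\t_2})^{-1} \big)^{\t_2}$, i.e. to the assertion that $\wt R(yz)$ is the right crossing-dual of $R(yz)$ on the twisted module $V^{\gpsi(y)}$. This is the spectral counterpart of \eqref{gJR-equation-balance-rep}, with the sovereign element $D$ replaced by $\Dfin$ and constant R-matrices replaced by their spectral versions. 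I would prove it by feeding the crossing symmetry of Lemma~\ref{lem:spectralR:funcrel} into this reduced equation: the Lemma converts the double partial-transpose-inverse on the right-hand side into $\Ad(1 \ot \Dfin^{-1})(R(p^2 \cdot))$, and the factor $\Dfin$ together with the argument $p^2 z$ in the definition of $\wt R$ are calibrated precisely to match this output.

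The main obstacle is the spectral-parameter bookkeeping in this last step. Applying Lemma~\ref{lem:spectralR:funcrel} on its own relates $R(z)$ and $R(p^2 z)$ with a residual mismatch in the power of $p$ between the crossing output and the definition of $\wt R$; this is exactly where the twist $\gpsi(y) = \Ad(\wt K(y)) \circ \psi$ must intervene through the compatibility \eqref{psi:Sigma}, namely $\psi \circ \Sigma_z = \Sigma_{1/z} \circ \psi$, which inverts the spectral variable on the first leg. I expect the parameter inversion coming from $\psi$, the explicit $p$-shift chosen in \eqref{dualK:spectral:def}, and the $\Dfin$-conjugation to cancel in concert, so that the two sides of the reduced equation agree on $V^{\gpsi(y)}$; checking that these powers of $p$ and the $\Dfin$-twists cancel exactly, and that all the manipulations are legitimate for the $\End(V \ot W)$-valued formal Laurent series rather than only at the level of the universal element, is the delicate part. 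Once \eqref{bartildeR:spectral:identity} is available, it plugs into the multiplicativity argument for the spectral transfer matrix precisely as \eqref{gJR-equation} did in the proof of Theorem~\ref{thm:t:Grothendieck}.
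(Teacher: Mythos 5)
Your treatment of the coproduct identity \eqref{gTK:spectral:coproduct2} matches the paper's proof: the paper computes $\Delta_{z/y}(\wt K(y))$ exactly as in your primary route, by inverting \eqref{K:spectral:coproduct} (with the substitution $y \mapsto py$, $z \mapsto pz$, so that the product of spectral parameters becomes $p^2yz$) and multiplying by the group-like $\Dfin$, arriving at \eqref{dualK:spectral:coproduct}; combining this with the shifted coproduct of $\TKM{}(y)$ gives \eqref{gTK:spectral:coproduct2}. Only your alternative suggestion is off: applying $\id\ot\Sigma_{z/y}$ to \eqref{tildeK:coproduct} is not available, since $\wt K(z)=\Dfin\cdot K(pz)^{-1}$ is \emph{not} a grading shift of $\wt K = D\cdot K^{-1}$ --- it uses $\Dfin\neq D$ (the sovereign $D$ does not act on $\cC$) together with the extra $p$-shift. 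As you lead with the paper's computation, this is harmless.

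The genuine gap is in your argument for the crossing identity \eqref{bartildeR:spectral:identity}. Your reduction to the single-crossing statement $\wt R(yz)_{V^{\gpsi(y)},W} = \big(\big(R(yz)_{V^{\gpsi(y)},W}^{\t_2}\big)^{-1}\big)^{\t_2}$ is fine and agrees with the paper, which strips the inner conjugation one step further: $\Ad(\wt K(y))$ acts in the first leg, hence commutes with ${}^{\t_2}$ and with inversion and cancels across the product, reducing everything to the pair $(V^\psi,W)$. But your closing mechanism is wrong. You claim that applying Lemma~\ref{lem:spectralR:funcrel} leaves a ``residual mismatch in the power of $p$'' which is then cancelled by the parameter inversion \eqref{psi:Sigma}. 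In the paper's proof, \eqref{psi:Sigma} plays no role in this step at all: $V^\psi$ is simply another object of $\cC$, and Lemma~\ref{lem:spectralR:funcrel}, applied verbatim to the pair $(V^\psi,W)$ at the point $yz$, is --- after the elementary partial-transpose-and-inverse rearrangement --- precisely the reduced identity. The $p$-shift built into the definition \eqref{dualK:spectral:def} is the \emph{entire} calibration, with nothing left over; this is exactly what your own parenthetical (``the factor $\Dfin$ together with the argument $p^2z$ in the definition of $\wt R$ are calibrated precisely to match this output'') already says, and which your final paragraph then contradicts.

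Moreover, \eqref{psi:Sigma} could not perform the rescue you assign to it. In $R(w)_{V^\psi,W}$ the spectral dependence sits entirely in the second, untwisted leg: one has $R(w)_{V^\psi,W} = \big((\id\ot\Sigma_w)(R^\psi)\big)_{V,W}$, and all operations occurring in \eqref{bartildeR:spectral:identity} --- the transposes ${}^{\t_2}$, the matrix inverses, the conjugation $\Ad(1\ot\Dfin)$ --- either act only on the second leg or commute with the first. The $\psi$-twist on the first leg is therefore a spectator and has no mechanism for shifting or inverting the argument $yz$ by any power of $p$; if you execute your plan as written (apply the lemma, find a leftover power of $p$, invoke \eqref{psi:Sigma}) the step does not close. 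The role of \eqref{psi:Sigma} in this section is elsewhere, namely in producing the spectral K-matrix relations \eqref{K:spectral:intw}--\eqref{K:spectral:coproduct} in the first place. The correct final move is pure bookkeeping: substitute $V^\psi\in\cC$ into Lemma~\ref{lem:spectralR:funcrel} at parameter $yz$ and observe that the resulting identity is the transpose-inverse rearrangement of the reduced form of \eqref{bartildeR:spectral:identity}, using no property of $\psi$ beyond $V^\psi\in\cC$.
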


\begin{proof}
The coproduct formula \eqref{gTK:spectral:coproduct2} follows directly from \eqref{K:spectral:coproduct} and the following computation:
\begin{align}
\Delta_{z/y}(\wt K(y)) &= \Delta(\Dfin) \cdot \Delta_{z/y}\big(K(py)\big)^{-1} \\
&= (\Dfin \ot \Dfin) \cdot (K(py)^{-1} \ot 1) \cdot \big( R(p^2yz)^\psi \big)^{-1} \cdot (1 \ot K(pz)^{-1}) \cdot J \\
&= (\wt K(y) \ot 1) \cdot \wt R(yz)^\psi \cdot (1 \ot \wt K(z)) \cdot J, \label{dualK:spectral:coproduct}
\end{align}
where we relied on the $p$-shift appearing in the definition of $\wt K(z)$.
The identity \eqref{bartildeR:spectral:identity} is easily seen to be equivalent to 
\eq{
\wt R(yz)_{V^\psi, W}^{\t_2} \cdot R(yz)_{V^\psi,W}^{\t_2} = \Id_{V \ot W}.
}
which in turn is equivalent to Lemma \ref{lem:spectralR:funcrel}.
\end{proof}

\subsection{Boundary transfer matrices}
Let $[\cC]$ be the Grothendieck ring  of finite-dimensional (type ${\bf 1}$) $\Uqgp$-modules.

\begin{theorem} \label{thm:spectraltransfermatrix}
Let $(\Uqg,\QSP)$ be a QSP of affine type.
There is a ring homomorphism $\TM(z): [\cC] \to \End(\FF{\phi}{}) \ot \bsF((z))$ given by
$\TM^{(V)}(z) \coloneqq  \Tr_2 \gTKM{\bullet,V}(z)$ for $V \in \cC$.
In particular, for all $V,W \in \cC$, $[\TM^{(V)}(y),\TM^{(W)}(z)]=0$ in $\End(\FF{\phi}{}) \ot \bsF((y,z))$.
\end{theorem}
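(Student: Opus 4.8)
The plan is to transcribe the proof of Theorem~\ref{thm:t:Grothendieck} into the spectral setting, using the two identities supplied by Lemma~\ref{lem:gTK:spectral} in place of the coproduct formula \eqref{gTK:coproduct2} and the boundary condition \eqref{gJR-equation}. First I would dispose of the easy part: the assignment $V \mapsto \TM^{(V)}(z) = \Tr_2 \oTKM{\bullet,V}(z)$ is constant on isomorphism classes by cyclicity of the partial trace together with the naturality of $\oTKM{}(z)$, and additive on short exact sequences by additivity of the trace; hence it descends to a group homomorphism $[\cC] \to \End(\FF{\phi}{}) \ot \bsF((z))$. It remains to prove multiplicativity, which I would establish in a two-parameter form from which both the ring-homomorphism property and the commutativity statement follow.

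Concretely, I would study the two-parameter expression $\Tr_{2,3}\big((\id \ot \Delta_{z/y})(\oTKM{}(y))_{\bullet,V,W}\big)$, with $V$ in the second tensor slot and $W$ in the third. Since each $\Sigma_w$ is a Hopf algebra homomorphism and $\Sigma_{z/y}\circ\Sigma_y = \Sigma_z$, one has $\Delta_{z/y}\circ\Sigma_y = (\Sigma_y \ot \Sigma_z)\circ\Delta$, so this expression is $\Tr_2 \oTKM{\bullet,V\ot W}$ with $V$ graded by $\Sigma_y$ and $W$ by $\Sigma_z$. On the other hand, substituting the spectral coproduct formula \eqref{gTK:spectral:coproduct2} and then taking a partial transpose in the third slot via \eqref{linalg}, the two middle factors $(\wt R(yz)^{\gpsi(y)})_{23}$ and $(R(yz)^{\gpsi(y)})_{23}$ collapse to the identity by \eqref{bartildeR:spectral:identity}; the surviving factors $(\oTKM{}(z)_{\bullet,W})_{13}$ and $(\oTKM{}(y)_{\bullet,V})_{12}$ act on disjoint slots, so the trace factorizes and yields $\TM^{(W)}(z)\cdot\TM^{(V)}(y)$, using $\Tr_2(\Phi^{\t_2}) = \Tr_2(\Phi)$. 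This is the exact spectral analogue of the computation in Theorem~\ref{thm:t:Grothendieck}.

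Finally I would extract the two assertions. Setting $y=z$, so that $\Delta_{z/y}=\Delta$, gives $\TM^{(V\ot W)}(z) = \TM^{(W)}(z)\,\TM^{(V)}(z)$; since $[\cC]$ is commutative (the category is braided by the spectral R-matrix) the same holds with the factors in the opposite order, proving that $\TM(z)$ is a ring homomorphism. For the commutativity of the images I would note that the mixed-graded modules $V\ot W$ (graded by $\Sigma_y$, $\Sigma_z$) and $W\ot V$ (graded by $\Sigma_z$, $\Sigma_y$) are isomorphic via the spectral braiding ${\sf flip}\circ R(z/y)_{V,W}$, hence define the same class in $[\cC]$ over $\bsF((y,z))$; running the computation of the previous paragraph with the roles of $(V,y)$ and $(W,z)$ interchanged gives $\TM^{(V)}(y)\,\TM^{(W)}(z)$ for the same two-parameter object, and comparing the two evaluations yields $[\TM^{(V)}(y),\TM^{(W)}(z)]=0$.

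I expect the main obstacle to be bookkeeping rather than conceptual: one must keep the spectral arguments ($y$, $z$, $yz$, $z/y$, and the shift by $p=q^{-h_\tau^\vee}$ hidden inside $\wt K(y)$) consistent throughout, and verify that every identity is read in the correct ring of formal Laurent expansions -- $\bsF((z))$ for the single-parameter statements and the domain along $\Delta_{z/y}$, namely $\bsF((y,z/y))$, for the coproduct manipulations -- so that the partial traces over the finite-dimensional modules in $\cC$ and the specialization $y=z$ are legitimate. A second point requiring care is the identification of the left-hand side with a genuine transfer matrix of a tensor product and the use of the spectral braiding to realize commutativity of $[\cC]$; this relies on the invertibility of $R(z/y)$ as a rational (equivalently formal) object, which holds for finite-dimensional modules in $\cC$.
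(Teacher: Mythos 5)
Your proposal is correct and follows essentially the same route as the paper, whose proof of this theorem consists precisely of the remark that one reruns the argument of Theorem~\ref{thm:t:Grothendieck} with Lemma~\ref{lem:gTK:spectral} replacing \eqref{gTK:coproduct2} and \eqref{gJR-equation}. Your two-parameter bookkeeping (the identity $\Delta_{z/y}\circ\Sigma_y=(\Sigma_y\ot\Sigma_z)\circ\Delta$, the collapse of the middle factors via \eqref{bartildeR:spectral:identity} after a partial transpose, the braiding argument for commutativity, and the care about the rings $\bsF((z))$ versus $\bsF((y,z/y))$) simply makes explicit the details the paper leaves implicit.
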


\begin{proof}
The proof is essentially the same as the proof of Theorem \ref{thm:t:Grothendieck}, with Lemma \ref{lem:gTK:spectral} substituting for \eqref{gTK:coproduct2} and \eqref{gJR-equation}. 
\end{proof}

\begin{remarks} \label{rmk:spectraltransfermatrix}
\hfill
\begin{enumerate}\itemsep0.25cm
\item 
The procedure outlined in Remark \ref{rmk:kolb-gendual} (2) also applies to $\TM(z)$.
Namely, we can replace $K$ in the definition \eqref{dualK:spectral:def} of the spectral dual K-matrix $\wt K(z)$ by the universal K-matrix $K'$ associated to a second QSP admitting the same twist pair $(\psi,J)$. For instance, one can consider two QSP algebras with the same underlying generalized Satake diagram, but with different QSP parameters.
\item Even if $K=K'$, in contrast with the case of finite type, $\TM(z)$ is typically highly nontrivial as we show below in the case of the augmented q-Onsager algebra.
\hfill \rmkend
\end{enumerate}
\end{remarks}

\subsection{Sklyanin's construction} \label{sec:quantumintegrability}
We explain how Sklyanin's formalism of commuting transfer matrices can be recovered as a special case.
Suppose $V ,W \in \cC$ satisfy $V^\psi=V$ and $W^\psi=W$ (in \cite[Thm.~7.8.1]{AV25a}, it is shown that if $\phi$ is $\tau$-restrictable, the longest element of the Weyl group of $\gfin$ sends $\alpha_i$ to $-\alpha_{\tau(i)}$ for all $i \in \Ifin$ and $V$ is a KR-module, then $V^\psi=V$). 
Then the \emph{untwisted} spectral reflection equation holds, as originally considered in \cite{Che84,Skl88}:
\eq{ \label{spectralK:untwRE}
\begin{aligned}
& R_{W,V}(z/y)_{21} \cdot K_W(z)_2 \cdot R_{V,W}(yz) \cdot K_V(y)_1 = \\
& \qq = K_V(y)_1 \cdot R_{W,V}(yz)_{21} \cdot K_W(z)_2 \cdot R_{V,W}(z/y).
\end{aligned}
}

For the spectral dual K-matrix defined by \eqref{dualK:spectral:def}, we obtain (either from \eqref{spectralK:untwRE} and the group-like nature of $\Dfin$, or from \eqref{dualK:spectral:coproduct} and the intertwining property of $R(z)$) the following dual reflection equation:
\eq{
\begin{aligned}
& R_{V,W}(z/y)^{-1} \cdot \wt K_W(z)_2 \cdot \wt R_{WV}(y z)_{21} \cdot \wt K_V(y)_1 = \\
& \qq = \wt K_V(y)_1 \cdot \wt R_{VW}(y z) \cdot \wt K_W(z)_2 \cdot R_{W,V}(z/y)^{-1}_{21}.
\end{aligned}
}
This equation appeared in \cite[(13)]{Skl88},\footnote{This is up to conventions: an overall left-right swap needs to be made, and strictly speaking one obtains a mild generalization, see, \eg \cite[(2.13)]{Vla15}.} with the interpretation of factorizable scattering at a second (in our case, right) boundary. 

Next, let $V_1, \ldots, V_N, W \in \cC$ be arbitrary.
In quantum integrability, the tensor product $V_1 \ot \cdots \ot V_N$ is interpreted as the space of states of a quantum spin chain and $W$ is called \emph{auxiliary space}.
By \cite[Lem.~6.3.1]{AV25b}, $V_1 \ot \cdots \ot V_N$ is an object in $\cW_\phi$.
Therefore $\TM^{(W)}(z) \in \End(\FF{\phi}{}) \ot \bsF((z))$ acts on $V_1 \ot \cdots \ot V_N$, yielding the linear map
\eq{ \label{boundaryTM:evaluated}
\TM^{(W)}_{V_1 \ot \cdots \ot V_N}(z) = \Tr_W \wt K_W(z) \cdot R_{W^\psi, V_1 \ot \cdots \ot V_N}(z)_{21} \cdot K_W(z) \cdot R_{V_1 \ot \cdots \ot V_N,W}(z)
}
where, as a consequence of \eqref{R:coproduct}, 
\begin{align}
R_{V_1 \ot \cdots \ot V_N,W}(z) &= R_{V_1,W}(z)_{1,N+1} \cdots R_{V_N,W}(z)_{N,N+1}, \\
R_{W^\psi, V_1 \ot \cdots \ot V_N}(z)_{21} &= R_{W^\psi,V_N}(z)_{N+1,N} \cdots R_{W^\psi,V_1}(z)_{N+1,1} .
\end{align}
This reproduces the factorized expression of the operator inside the trace over $W$, given by \cite[(20)-(24)]{Skl88} and, more generally, \cite[(2.12)]{Vla15}.\footnote{Again, this is up to conventions, and note that the precise identification can only be made for unitary trigonometric R-matrices (this is guaranteed if all $V_i$ are irreducible).}

\begin{example}
The special case under consideration in \cite[Secs.~4-6]{Skl88} is $\fkg = \wh\fksl_2$, $\phi = \Ad(\chi) \circ \om \circ \tau$ where $\tau$ is the nontrivial diagram automorphism, see Example \ref{example:pseudoquasisplit}. 
For a particular choice of $\chi$, $\QSP$ is generated by
\eq{
F_0 - q \xi^{-1} E_1 k_0^{-1}, \qq F_1 - q \xi E_0 k_1^{-1}, \qq k_0^{\pm 1} k_1^{\mp 1}
}
with $\xi \in \bsF^\times$ free.
We take $V\coloneqq W=V_1 = \ldots = V_N$ to be the irreducible 2-dimensional representation defined by
\eq{
E_1, F_0 \mapsto  \begin{pmatrix} 0 & 1 \\ 0 & 0 \end{pmatrix} , \qq
F_1, E_0 \mapsto  \begin{pmatrix} 0 & 0 \\ 1 & 0 \end{pmatrix} , \qq 
k_1, k_0^{-1} \mapsto  \begin{pmatrix} q & 0 \\ 0 & q^{-1} \end{pmatrix}.
}
We can now explicitly check that $V^{\psi}=V$ where $\psi$ is a gauge transformation of $\phi_q$ by a Cartan correction.
In this case, we have $\Dfin_V = \Id_V$. 
By solving the $\psi$-twisted $\QSP$-intertwining condition, we readily obtain a well-known trigonometric K-matrix:
\[
K^{\sf trig}(z) =  \begin{pmatrix} 1 & 0 \\ 0 & \frac{\xi-z^2}{\xi z^2 -1} \end{pmatrix} .
\]
Setting $z=1$ in $\frac{{\sf d}}{{\sf d}z}\TM^{(V)}_{V^{\ot N}}(z)$, one obtains the Hamiltonian for the XXZ spin chain with diagonal boundary conditions (with the two sets of QSP parameters related to each other).
This is a highly nontrivial operator, making clear that $\TM^{(V)}(z)$ is not a scalar matrix. \hfill \rmkend
\end{example}


\subsection{Connections} \label{sec:reptheoryapplications}

We discuss some, generally speculative, connections of the boundary transfer matrix with various representation-theoretic constructions.

\begin{enumerate} \itemsep1mm
	\item
	It is natural to interpret the image of $\TM(z)$ in $\QSP \otimes\Lfml{\C}{z}$ as a boundary analogue of a Bethe subalgebra.
	For example, in the case of (an extension of) the q-Onsager algebra, the expansion coefficients have been computed in \cite{BK05,BB13,Lem23}.
	\item
	Fusion formulas for the boundary transfer matrices $\TM^{(W)}_V(z)$ (TT relations), have been considered in special cases, originally by L. Mezincescu and R. Nepomechie in \cite{MN92}. 
	Using universal transfer matrices with auxiliary space equal to a spin-$j$ module of $U_q(\wh\fksl_2)$, this has been accomplished by G. Lemarthe in \cite[Thm.~4.2.5]{Lem23}, also see \cite{LBG25b}, for the alternating central extension $\mathcal{A}_q$ of the q-Onsager algebra, subject to conjectural identification of an explicitly constructed fused K-operator and the action of a tentative universal tensor K-matrix for $\mathcal{A}_q$.
	It would be nice to connect the universal element $\gTKM{}$ to the desired universal tensor K-matrix for $\mathcal{A}_q$, and approach boundary TT relations for other types from the universal transfer matrices discussed in this paper.
\item
To upgrade the map $\TM(z)$ to a q-character map along the lines of \cite{FR99,FM01} it is necessary to define a QSP analogue of the \emph{loop} Harish-Chandra map. This requires a Drinfeld-type loop presentation of $\Uqk$, which was recently obtained for various quasi-split types in the works \cite{LW21,Zh22,LWZ23,LWZ24}.
\item
We obtained a ring homomorphism: $[\cC] \to \QSP \ot\Lfml{\C}{z}$ by letting the second leg of $\gTKM{}$ act on a quantum group module.
One can instead evaluate the \emph{first} leg of $\gTKM{}$, yielding a group homomorphism $[\Mod_{\sf fd}(\QSP)] \to \Uqgp \ot\Lfml{\C}{z}$.
When $X$ does not contain the affine node, we expect it to generalize the boundary q-character map defined in \cite{Prz23,LP24} in quasi-split types, after composition with the {loop} Harish-Chandra map.
\item
Consider the evaluated transfer matrix $\TM^{(W)}_{V_1 \ot \cdots \ot V_N}(z)$, see \eqref{boundaryTM:evaluated}, in the case that $V_i = W(a_i)$ with $a_i \in \bsF^\times$ generic, for all $1 \le i \le N$. 
By \cite[Thm.~3.13]{Vla15}, the $N$ ``interpolated'' operators $\TM^{(W)}_{V_1 \ot \cdots \ot V_N}(a_i)$ can be identified with scattering matrices for systems with two boundaries.\footnote{The identification with $\TM_V^{(W)}(a_i)$ relies on the condition $R_{W,W}(1) = {\sf flip}_{W,W}$, known to hold for KR modules $W$, see also the discussion in \cite[App.~10]{FHR22}. }
They are q-connection matrices of two-boundary quantum Knizhnik-Zamolodchikov equations, see \cite[Sec.~5]{Che92}, but with trivial step parameter.

In affine type, the quantum group $\Uqg$ has two important categories of representations: $\Oint$ and $\cC$. 
They are related by intertwiners satisfying quantum KZ equations of type A, described in \cite{FR92} in terms of its universal R-matrix $R$. 
We expect that, for a suitable choice of dual K-matrix, the tensor K-matrix $\gTKM{}$ of an affine QSP can be used to define analogous intertwiners connecting certain infinite-dimensional $\Uqk$-modules with objects in $\cC$.

%
%
\item Finally, as mentioned in the introduction to \cite{AV25b}, an overarching goal that connects with other points mentioned here, is to formulate a universal description of spectra for quantum integrable systems with reflecting boundaries.
In the case of systems on a circle (periodic boundary conditions), the central idea is that the spectra are determined by so-called Baxter relations, which should be viewed as relations in the Grothendieck ring of a suitable category of $U_q(\fkb^+)$-modules, see \cite{HJ12,FH15}.
It is natural to suspect a similar theory can be developed for systems with reflecting boundary conditions.

We can make the following convenient observation, which means that $\TM^{(W)}(z)$ can be defined for suitable $U_q(\fkb^+)$-modules for certain affine QSPs. 
Suppose that the dual K-matrix $\wt K$ appearing in $\gTKM{}$ is itself obtained, see Remark \ref{rmk:spectraltransfermatrix}, from the universal basic K-matrix $K'$ associated to an affine quantum symmetric pair.
Further, suppose that the affine quantum symmetric pair defining the basic K-matrix $K$ is quasi-split.
If both $K$ and $K'$ are part of standard cylindrical structures, then indeed $\gTKM{}$ lies in a completion of $\Uqk \ot U_q(\fkb^+)$.

\Omit{
\bartcomment{Made it a little more explicit. Do we want go further and claim that $\gTKM{}$ acts on modules in the HJ-category $\cO_{HJ}$ of $U_q(\fkb^+)$-modules, or leave it for the next paper?
For this we need a statement that in the quasi-split case, the standard cylindrical structure satisfies 
\[
\TKM{} = (R^{\phi_q^{-1}})_{21} \cdot (1 \ot \Upsilon) \cdot R =\sum_{\la \ge 0} b_\la \ot x_\la
\]
with $b_\la \in \Uqk$, $x_\la \in (U_q^{\ge 0})_\la$. 
Do we know this from results in paper 3?
Note that such a statement is necessary to conclude that $\gTKM{}$ acts on modules in the HJ-category $\cO_{HJ}$ of $U_q(\fkb^+)$-modules.
Indeed, along the lines of our argument in paper 2, we need to argue that for a suitable grading shift $s: \Qlat \to \Z$, the coefficients $\sum_{\la \ge 0, s(\la) = m} b_\la \ot x_\la$ appearing in 
\[
(\id \ot \Sigma_z)(\TKM{}) = \sum_{m \ge 0} z^m \sum_{\la \ge 0 \atop s(\la) = m} b_\la \ot x_\la
\]
give rise to a well-defined element of $\Uqk \ot \End(V)$ for any $V \in \cO_{HJ}$.
}
}
\end{enumerate} 


\providecommand{\bysame}{\leavevmode\hbox to3em{\hrulefill}\thinspace}
\providecommand{\MR}{\relax\ifhmode\unskip\space\fi MR }
\providecommand{\MRhref}[2]{
	\href{http://www.ams.org/mathscinet-getitem?mr=#1}{#2}
}
\providecommand{\href}[2]{#2}


\begin{thebibliography}{DCNTY19}
	
%
\bibitem[ATL19]{ATL19}
	A. Appel, V. Toledano~Laredo, 
	{\it Coxeter categories and quantum groups},
	Selecta Math. (N.S.) \textbf{25}, no.~3 (2019), p97. \MR{3984102}
	
\bibitem[ATL24]{ATL24}
	A. Appel, V. Toledano~Laredo,
	{\it Pure braid group actions on category O modules},
	Pure Appl. Math. Q. \textbf{20}, no.~1 (2024). 
	
\bibitem[AV22]{AV22}
	A. Appel, B. Vlaar, 
	{\it Universal {K}-matrices for quantum {K}ac--{M}oody algebras},
	Represent. Theory {\bf 26} (2022), 764--824.

\bibitem[AV25a]{AV25a}
A. Appel, B. Vlaar, 
{\it Trigonometric {K}-matrices for finite-dimensional representations of quantum affine algebras},
J. Eur. Math. Soc. (2025), published online first.

\bibitem[AV25b]{AV25b}
	A. Appel, B. Vlaar, 
	{\it Tensor K-matrices for quantum symmetric pairs},
	Commun. Math. Phys., {\bf 406}, no.~5 (2025), 100.



\bibitem[Ba82]{Ba82}
	R. Baxter,
	{\it Exactly solved models in statistical mechanics},
	Acad. Press Inc., 1989. Reprint of the 1982 original.

\bibitem[BB10]{BB10}
	P. Baseilhac, S. Belliard,
	{\it Generalized q-Onsager algebras and boundary affine Toda field theories}.
	Lett. Math. Phys. {\bf 93} (2010): 213--228.
	{\tt arXiv:0906.1215}.
	
\bibitem[BB13]{BB13}
	P. Baseilhac, S. Belliard, 
	{\it The half-infinite XXZ chain in Onsager's approach},
	Nucl. Phys. B {\bf 873}, no.~3 (2013), 550--584.
\bibitem[Be94]{Be94}
J. Beck, {\it Convex bases of PBW type for quantum affine algebras}, Comm. Math. Phys. {\bf 165} (1994), no.~1, 193--199.

\bibitem[Bic01]{Bic01}
	J. Bichon,
	{\it Cosovereign Hopf algebras},
	J. Pure Appl. Alg. {\bf 157}, no.~2-3 (2001), 121--133.

\bibitem[BK05]{BK05}
	P. Baseilhac, K. Koizumi, 
	{\it A deformed analogue of Onsager's symmetry in the XXZ open spin chain},
	J. Stat. Mech. \textbf{0510} (2005), P005.

\bibitem[BK15]{BK15}
	M. Balagovi\'{c}, S. Kolb, 
	{\it The bar involution for quantum symmetric pairs},
	Represent. Theory {\bf 19}, no.~8 (2015), 186--210.

\bibitem[BK19]{BK19}
	M. Balagovi\'{c}, S. Kolb, 
	{\it Universal {K}-matrix for quantum symmetric pairs},
	J. Reine Angew. Math. \textbf{747} (2019), 299--353. \MR{3905136}
	
%
%
\bibitem[Bro13]{Bro13}
	A. Brochier, 
	{\it  Cyclotomic associators and finite type invariants for tangles in the solid torus},
	Algebr. Geom. Topol. \textbf{13} (2013), 3365-3409.


%

\bibitem[BW18a]{BW18a}
H. Bao, W. Wang, 
{\it A new approach to {K}azhdan-{L}usztig theory of type {$B$} via quantum symmetric pairs},
Ast\'{e}risque, no.~402 (2018), vii+134.\MR{3864017}

\bibitem[BW18b]{BW18b}
	H. Bao, W. Wang, 
	{\it Canonical bases arising from quantum symmetric pairs},
	Invent. Math., no.~213 (2018), 1099--1177.

\bibitem[BW21]{BW21}
	H. Bao, W. Wang,
	{\it Canonical bases arising from quantum symmetric pairs of Kac-Moody type},
	Comp. Math. {\bf 157}, no.~7 (2021), 1507--1537.

\bibitem[BZBJ18]{BZBJ18}
	D. Ben-Zvi, A. Brochier, D. Jordan, 
	{\it  Quantum character varieties and braided module categories},
	Selecta Math. (N.S.) \textbf{24}, no. 5 (2018), 4711-4748.

\bibitem[CGM14]{CGM14}
	H. Chen, N. Guay, X. Ma, 
	{\it Twisted Yangians, twisted quantum loop algebras and affine Hecke algebras of type BC},
	Transactions of the American Mathematical Society, 366 (2014), 2517--2574.


\bibitem[Che84]{Che84}
	I. Cherednik, 
	{\it Factorizing particles on a half line, and root systems},
	Teoret. Mat. Fiz. \textbf{61}, no.~1 (1984), 35--44. \MR{774205}

%
\bibitem[Che92]{Che92}
	I. Cherednik, 
	{\it Quantum {K}nizhnik-{Z}amolodchikov equations and affine root systems},
	Comm. Math. Phys. \textbf{150}, no.~1 (1992), 109--136. \MR{1188499}

%

\bibitem[CP94]{CP94}
	V. Chari, A. Pressley, 
	{\it Quantum affine algebras and their representations},
	Representations of groups (Banff, AB, 1994), CMS Conference Proceedings, vol.~16, Amer. Math. Soc., Providence, RI, 1994, 59--78.


%

\bibitem[Dam98]{Da98}
I. Damiani, {\it La $R$-matrice pour les alg\`ebres quantiques de type affine non tordu}, Ann. Sci. \'Ecole Norm. Sup. (4) {\bf 31} (1998), no.~4, 493--523.

\bibitem[Dav10]{Dav07}
	A. Davydov,
	{\it Twisted automorphisms of Hopf algebras},
	Noncommutative structures in Mathematics and Physics, 
	Koninklijke Vlaamse Academie van Belgi\"{e} voor Wetenschappen en Kunsten, 2010, pp. 103--130.


\bibitem[DCK90]{DCK90}
C. De Concini, V.G. Kac, 
{\it Representations of quantum groups at roots of 1},
Operator algebras, unitary representations, enveloping algebras and invariant theory, Birkh\"{a}user (1990), 471--506.

\bibitem[DCNTY19]{DCNTY19}
K. De Commer, S. Neshveyev, L. Tuset, M. Yamashita, 
{\it Ribbon braided module categories, quantum symmetric pairs and Knizhnik–Zamolodchikov equations},
Comm. Math. Phys. {\bf 367} (2019), 717--769.

%
\bibitem[DK19]{DK19}
	L. Dobson, S. Kolb, 
	{\it  Factorisation of quasi $K$-matrices for quantum symmetric pairs},
	Selecta Math. (N.S.) {\bf 25}, no.~4 (2019), Paper No. 63, 55 pp.

\bibitem[DKM03]{DKM03}
	J. Donin, P. Kulish, A. Mudrov,
	{\it On a universal solution to the reflection equation},
	Lett. Math. Phys. {\bf 63} (2003), 179--194.

%
\bibitem[DN98]{DN98}
	M. Dijkhuizen, M. Noumi,
	{\it A family of quantum projective spaces and related q-hypergeometric orthogonal polynomials},
	Trans. Amer. Math. Soc. {\bf 350}, no. 8 (1998), 3269--3296.

\bibitem[Dri86]{Dri86}
	V. Drinfeld, 
	{\it Quantum groups},
	Proceedings of the ICM, Vol. 1, 2 (Berkeley, Calif., 1986), Amer. Math. Soc., Providence, RI, 1987, 798--820. \MR{934283}

\bibitem[Dri90]{Dri90}
V. Drinfeld, {\it Almost cocommutative Hopf algebras}, Leningrad Math. J. {\bf 1} (1990), no.~2, 321--342; translated from Algebra i Analiz {\bf 1} (1989), no.~2, 30--46.


%

\bibitem[EM02]{EM02}
	P. Etingof, A. Moura, 
	{\it On the quantum {K}azhdan-{L}usztig functor}, 
	Math. Res. Lett. \textbf{9} (2002), no.~4, 449--463. \MR{1928865}

\bibitem[Enr07]{Enr07}
	B. Enriquez, 
	{\it  Quasi-reflection algebras and cyclotomic associators},
	Selecta Math. (N.S.) {\bf 13}, no.~3 (2007), 391--463.


%
\bibitem[FH15]{FH15}
	E. Frenkel, D. Hernandez,
	{\it Baxter's relations and spectra of quantum integrable models},
	Duke Math. J. {\bf 164} (2015), 2407--2460.

\bibitem[FHR22]{FHR22}
	E. Frenkel, D. Hernandez, N. Reshetikhin,
	{\it Folded quantum integrable models and deformed W-algebras},
	Lett. Math. Phys. {\bf 112}, no.~4 (2022), 80.

\bibitem[FM01]{FM01}	
	E. Frenkel, E. Mukhin, 
	{\it Combinatorics of q-characters of finite-dimensional representations of quantum affine algebras},
	Comm. Math. Phys. {\bf 216} (2001), 23--57.

\bibitem[FM91]{FM91}
	L. Freidel, J. Maillet, 
	{\it Quadratic algebras and integrable systems}, 
	Physics Letters B \textbf{262} (1991), no.~2, 278--284.

\bibitem[FR92]{FR92}
	I. Frenkel, N. Reshetikhin, 
	{\it Quantum affine algebras and holonomic difference equations},
	Comm. Math. Phys. \textbf{146}, no.~1 (1992), 1--60. \MR{1163666}

\bibitem[FR99]{FR99}
	E. Frenkel, N. Reshetikhin, 
	{\it The q-characters of representations of quantum affine algebras and deformations of W-algebras},
	In Recent Developments in Quantum Affine Algebras and Related Topics, N. Jing and K. Misra (eds.), 
	Contemporary Mathematics {\bf 248} (1999), 163--205.

\bibitem[FRT90]{FRT90}
	L. Faddeev, N. Reshetikhin, L. Takhtajan, 
	{\it Quantization of Lie groups and Lie algebras},
	Leningrad Math. J. {\bf 1} (1990), 193.

\bibitem[FSHY97]{FSHY97}
	H. Fan, K.-J. Shi, B.-Y. Hou, Z.-X. Yang,
	{\it Integrable boundary conditions associated with the $Z_n \times Z_n$ Belavin model and solutions of the reflection equation},
	Int. J. Mod. Phys. A {\bf 12}, no.~16, (1997), 2809--2823.

%
\bibitem[GIK98]{GIK98}
	A. Gavrilik, N. Iorgov, A. Klimyk,
	{\it Nonstandard Deformation $U_q'(\fkso_n)$: The Imbedding $U_q'(\fkso_n) \subset U_q(\fksl_n)$ and Representations.}
	Symmetries in Science X (1998), 121--133.	

\bibitem[GK91]{GK91}
	A. Gavrilik, A. Klimyk, 
	{\it q-Deformed orthogonal and pseudo-orthogonal algebras and their representations},
	Lett. Math. Phys. \textbf{21} (1991), 215--220.

%
%
\bibitem[Her06]{Her06}
	D. Hernandez, 
	{\it The Kirillov-Reshetikhin conjecture and solutions of $T$-systems}, 
	J. Reine Angew. Math. {\bf 596} (2006), 63--87. \MR{2254805}
	
\bibitem[HJ12]{HJ12}
	D. Hernandez, M. Jimbo,
	{\it Asymptotic representations and Drinfeld rational fractions},
	Compos. Math. \textbf{148}, no.~5 (2012), 1593--1623.

%
%
%
\bibitem[Jim85]{Jim85}
	M. Jimbo, 
	{\it A q-analogue of $U(\fkg)$ and the Yang-Baxter equation},
	Lett. Math. Phys. \textbf{11} (1985), 63--69.

%

\bibitem[Kac90]{Kac90}
	V. Kac, 
	{\it Infinite-dimensional Lie algebras},
	3rd ed., Cambridge University Press (1990).

\bibitem[KhT92]{KhT92}
	S. Khoroshkin, V. Tolstoy,
	{\it The universal R-matrix for quantum untwisted affine Lie algebras}.
	Funct. Anal. Appl. {\bf 26} (1992), 69--71.
	
\bibitem[Kol14]{Kol14}
	S. Kolb, 
	{\it Quantum symmetric {K}ac-{M}oody pairs},
	Adv. Math. \textbf{267} (2014), 395--469. \MR{3269184}
	
\bibitem[Kol20]{Kol20}
	S. Kolb, 
	{\it  Braided module categories via quantum symmetric pairs},
	Proc. Lond. Math. Soc. (3) {\bf 121}, no.~1 (2020), 1--31. \MR{4048733}
	
\bibitem[Kol22]{Kol22}
	S. Kolb, 
	{\it The bar involution for quantum symmetric pairs -- hidden in plain sight},
	Hypergeometry, integrability and Lie theory, 69--77, Contemp. Math. {\bf 780}, Amer. Math. Soc. (2022).

\bibitem[Koo93]{Koo93}
	T. Koornwinder,
	{\it Askey-Wilson polynomials as zonal spherical functions on the $SU(2)$ quantum group}, 
	SIAM J. Math. Anal. {\bf 24}, no.~3 (1993), 795--813.

\bibitem[KOW22]{KOW22}
	H. Kusano, M. Okado, H. Watanabe, 
	\emph{Kirillov-Reshetikhin modules and quantum K-matrices}, Comm. Math. Phys. {\bf 405} (2024), no.~4, Paper No. 88, 27 pp.

\bibitem[KR90]{KR90}
	A. N. Kirillov\ and\ N. Reshetikhin, 
	{\it $q$-Weyl group and a multiplicative formula for universal $R$-matrices}.
	Comm. Math. Phys. {\bf 134}, no.~2 (1990), 421--431.

%

\bibitem[KS09]{KS09}
	S. Kolb, J. Stokman,
	{\it Reflection equation algebras, coideal subalgebras, and their centres},
	Sel. Math. {\bf 15}, no.~4 (2009), 621-664.

\bibitem[KS92]{KS92}
	P. Kulish, E. Sklyanin, 
	{\it Algebraic structures related to reflection equations},
	J. Phys. A: Math. Gen. \textbf{25}, no.~22 (1992), 5963.
	
\bibitem[KS95]{KS95}
	D. Kazhdan, Y. Soibelman, 
	{\it Representations of quantum affine algebras},
	Selecta Math. (N.S.) \textbf{1}, no.~3 (1995), 537--595. \MR{1366624}
	
\bibitem[KSS93]{KSS93}
P. Kulish, R. Sasaki, C. Schwiebert,
{\it Constant solutions of reflection equations and quantum groups},
J. Math. Phys. {\bf 34}, no.~1 (1993), 286--304.

\bibitem[KT09]{KT09}
	J. Kamnitzer, P. Tingley,
	{\it The crystal commutor and Drinfeld’s unitarized R-matrix}.
	Journal of Algebraic Combinatorics {\bf 29}, no.~3 (2009), 315--335.

\bibitem[KW92]{KW92}
	V. Kac, S. Wang, 
	{\it On automorphisms of {K}ac-{M}oody algebras and groups},
	Adv. Math. \textbf{92}, no.~2 (1992), 129--195. \MR{1155464}

\bibitem[KY20]{KY20}
	S. Kolb, M. Yakimov, 
 	{\it Symmetric pairs for Nichols algebras of diagonal type via star products},
	Adv. Math. \textbf{365} (2020), 107042.

%

%
%
\bibitem[LBG23]{LBG23}
	G. Lemarthe, P. Baseilhac, A.M. Gainutdinov, 
	{\it Fused K-operators and the $ q $-Onsager algebra},
	preprint at \href{https://arxiv.org/abs/2301.00781v2}{\tt arXiv:2301.00781} (2023).

\bibitem[LBG25a]{LBG25a}
	G. Lemarthe, P. Baseilhac, A.M. Gainutdinov, 
	{\it On the R-matrix presentation of the q-Onsager algebra},
	forthcoming (2025), announced by P. Baseilhac at the conference ``Algebraic structures in the Yang-Baxter equation'' at Heriot-Watt University, June 2024.

\bibitem[LBG25b]{LBG25b}
	G. Lemarthe, P. Baseilhac, A.M. Gainutdinov, 
	{\it TT-relations and the $ q $-Onsager algebra},
	forthcoming (2025).

\bibitem[Lem23]{Lem23}
G. Lemarthe,
{\it Universal solutions of the reflection equation, the q-Onsager algebra and applications},
PhD thesis, Universit\'{e} de Tours, \href{https://theses.hal.science/tel-04601310v1}{\tt theses.hal.science/tel-04601310} (2023).

\bibitem[Let97]{Let97}
	G. Letzter,
	{\it Subalgebras which appear in quantum Iwasawa decompositions},
	Canadian Journal of Math. {\bf 49}, no. 6 (1997), 1206--1223.

\bibitem[Let99]{Let99}
	G. Letzter,
	{\it Symmetric pairs for quantized enveloping algebras},
	J. Algebra {\bf 220}, no. 2 (1999), 729--767.

\bibitem[Let02]{Let02}
	G. Letzter, 
	{\it Coideal subalgebras and quantum symmetric pairs},
	New directions in Hopf algebras, Math. Sci. Res. Inst. Publ. {\bf 43}, Cambridge Univ. Press (2002), 117--165. \MR{1913438}
	
\bibitem[LP24]{LP24}
J.-R. Li, T. Prze\'{z}dziecki, \emph{Compatibility of Drinfeld presentations for split affine Kac-Moody quantum symmetric pairs}, Lett. Math. Phys. (2025). In press.

\bibitem[LS90]{LS90}
	S. Levendorskii, Y. Soibelman, 
	{\it Some applications of the quantum Weyl groups}, J. Geom. Phys. {\bf 7}, no.~2 (1990), 241--254. 

\bibitem[LSS93]{LSS93}
	S. Levendorskii, Y. Soibelman, V. Stukopin,
	{\it The Quantum Weyl Group and the Universal Quantum $R$-Matrix for Affine Lie Algebra $A_1^{(1)}$}, Lett. Math. Phys. {\bf 27} (1993), 253--264. 

\bibitem[Lus94]{Lus94}
	G. Lusztig, 
	{\it Introduction to quantum groups},
	Modern Birkh\"{a}user Classics, Birkh\"{a}user/Springer, New York, 2010, Reprint of the 1994 edition. \MR{2759715}

\bibitem[LW21]{LW21}
	M. Lu, W. Wang,
	{\it A Drinfeld type presentation of affine $\imath$quantum groups I: Split ADE type},
	Adv. Math. {\bf 393} (2021), 108111.

\bibitem[LWZ23]{LWZ23}
	M. Lu, W. Wang, W. Zhang,
	{\it Braid group action and quasi-split affine $\imath$quantum groups I},
	Represent. Theory {\bf 27} (2023), 1000--1040.

\bibitem[LWZ24]{LWZ24}
	M. Lu, W. Wang, W. Zhang,
	{\it Braid group action and quasi-split affine $\imath$quantum groups II}: higher rank, 
	Comm. Math. Phys. {\bf 405} (2024), no.~6, Paper No. 142, 33 pp.

%

\bibitem[MN91]{MN91}
	L. Mezincescu, R.I. Nepomechie, 
	{\it Integrable open spin chains with nonsymmetric R-matrices},
	J. Phys. A: Math. Gen. {\bf 24} (1991), L17.

\bibitem[MN92]{MN92}
	L. Mezincescu, R.I. Nepomechie, 
	{\it Fusion procedure for open chains},
	J. Phys. A: Math. Gen. {\bf 25} (1992), 2533.

%
\bibitem[MRS03]{MRS03}	
	A. Molev, E. Ragoucy, P. Sorba,
	{\it Coideal subalgebras in quantum affine algebras},
	Rev. Math. Phys. {\bf 15}, no. 8 (2003), 789--822.
	
\bibitem[Nou96]{Nou96}
	M. Noumi, 
	{\it Macdonald’s symmetric polynomials as zonal spherical functions on some quantum homogeneous spaces},
	Adv. Math. {\bf 123}, no. 1 (1996), 16-77.

\bibitem[NS95]{NS95}
	M. Noumi, T. Sugitani,
	{\it Quantum symmetric spaces and related q-orthogonal polynomials}, 
	In: Group Theoretical Methods in Physics (ICGTMP) (Toyonaka, Japan, 1994), World Sci. Publishing, River Edge, N.J. (1995), 28--40.

%
\bibitem[Prz25]{Prz23}
T. Prze\'zdziecki, {\it Drinfeld rational fractions for affine Kac--Moody quantum symmetric pairs}, Selecta Math. (N.S.) {\bf 31} (2025), no.~5, Paper No. 87.

%
%
	
\bibitem[RT90]{RT90}
N. Reshetikhin, V. Turaev, {\it Ribbon graphs and their invariants derived from quantum groups}, Comm. Math. Phys. {\bf 127} (1990), no.~1, 1--26. \MR{1036112}

	
\bibitem[RV18]{RV18}
	V. Regelskis, B. Vlaar, 
	{\it Solutions of the $U_q(\wh{\fksl}_N)$ reflection equations},
	J. Phys. A: Math. Theor. \textbf{51}, no.~34 (2018), 345204.

\bibitem[RV20]{RV20}
	V. Regelskis, B. Vlaar, 
	{\it Quasitriangular coideal subalgebras of {$U_q(\mathfrak g)$} in terms of generalized Satake diagrams},
	Bull. Lond. Math. Soc. \textbf{52}, no.~4 (2020), 693--715. \MR{4171396}
	
\bibitem[RV22]{RV22}
	V. Regelskis, B. Vlaar, 
	{\it Pseudo-symmetric pairs for {K}ac-{M}oody algebras},
	Hypergeometry, integrability and Lie theory, 155--203, Contemp. Math. {\bf 780}, Amer. Math. Soc. (2022).
	
%
%


\bibitem[Skl88]{Skl88}
	E. Sklyanin, 
	{\it Boundary conditions for integrable quantum systems},
	J. Phys. A: Math. Gen. \textbf{21}, no.~10 (1988), 2375.

\bibitem[ST09]{ST09}
	N. Snyder, P. Tingley,
	{\it The half-twist for $U_q(\mathfrak g)$ representations}.
	Algebra \& Number Theory. {\bf 3}, no.~7 (2009), 809--834.

%
%
%
%
\bibitem[tDHO98]{tDHO98}
	T. tom Dieck, R. H\"{a}ring-Oldenburg,
	{\it Quantum groups and cylinder braiding},
	Forum Math. {\bf 10}, no.~5 (1998), 619--639.

%
\bibitem[Vla15]{Vla15}
	B. Vlaar,
	{\it Boundary transfer matrices and boundary quantum KZ equations},
	J. Math. Phys. {\bf 56}, no. 7 (2015), 071705.



%
\bibitem[Wa24]{Wa24}
	H. Watanabe,
	{\it Crystal bases of modified $\imath$quantum groups of certain quasi-split types},
	Algebras and Represent. Theory {\bf 27}, no. 1 (2024), 1--76.

\bibitem[WZh23]{WZh23}
	W. Wang, W. Zhang,
	{\it An intrinsic approach to relative braid group symmetries on $\imath$quantum groups},
	Proc. London Math. Soc. {\bf 127}, no.~5 (2023), 1338--1423.

\bibitem[Xu23]{Xu23}
	X. Xu,
	{\it Stokes Phenomenon and Reflection Equations},
	Commun. Math. Phys. {\bf 398} (2023), 353--373.

\bibitem[YNZ06]{YNZ06}
	W.-L. Yang, R. Nepomechie, Y.-Z. Zhang,
	{\it Q-operator and T-Q relation from the fusion hierarchy},
	Phys. Lett. B {\bf 663}, no. 4-5 (2006), 664--670.

\bibitem[Zh22]{Zh22}
	W. Zhang, 
	{\it A Drinfeld-type presentation of affine $\imath$quantum groups II: split BCFG type},
	Lett. Math. Phys., {\bf 112},  no. 5 (2022), p89.

\end{thebibliography}
\end{document}